\newtheorem{main}{Theorem}
\newtheorem{maincor}[main]{Corollary}
\newtheorem{theorem}{Theorem}[section]
\newtheorem{corollary}[theorem]{Corollary}
\newtheorem{example}[theorem]{Example}
\newtheorem{lemma}[theorem]{Lemma}
\newtheorem{fact}[theorem]{Fact}
\newtheorem{proposition}[theorem]{Proposition}
\newtheorem{definition}[theorem]{Definition}
\newtheorem{remark}[theorem]{Remark}
\newtheorem{question}[theorem]{Question}
\newtheorem{construction}[theorem]{Construction}
\newtheorem*{corollary*}{Corollary C'}
\DeclareMathOperator{\EFD}{EFD}
\DeclareMathOperator{\qr}{qr}
\DeclareMathOperator{\Mod}{Mod}
\DeclareMathOperator{\bfMod}{\bf{Mod}}
\DeclareMathOperator{\WO}{WO}
\DeclareMathOperator{\id}{id}
\DeclareMathOperator{\Ell}{\mathbf{Ell}}
\DeclareMathOperator{\Mor}{Mor}
\DeclareMathOperator{\Ob}{Ob}
\DeclareMathOperator{\image}{im}
\DeclareMathOperator{\kernel}{ker}
\DeclareMathOperator{\ev}{ev}
\DeclareMathOperator{\Aff}{Aff}
\DeclareMathOperator{\inv}{inv}
\DeclareMathOperator{\Pairing}{Pairing}
\DeclareMathOperator{\States}{States}
\DeclareMathOperator{\proj}{proj}
\DeclareMathOperator{\LO}{LO}
\DeclareMathOperator{\EF}{EF}
\newcommand{\Cstar}{\mathbf{C^*}} 
\newcommand{\theory}{T}
\newcommand{\fancyK}{\mathcal{K}}
\newcommand{\Lang}{L}
\newcommand{\cZ}{\mathcal{Z}}
\newcommand{\Kconv}{\mathbf{K}_{\textnormal{conv}}}
\def\dotminussym#1#2{%
  \setbox0=\hbox{$\m@th#1-$}%
  \kern.5\wd0%
  \hbox to 0pt{\hss\hbox{$\m@th#1-$}\hss}%
  \raise.6\ht0\hbox to 0pt{\hss$\m@th#1.$\hss}%
  \kern.5\wd0}
\NewDocumentCommand{\notrel}{m}
 {%
  \ooalign{$#1$\cr\noalign{\kern-.2ex}\hidewidth$/$\hidewidth\cr}
 }
\DeclarePairedDelimiter{\ip}{\langle}{\rangle}
\DeclarePairedDelimiter{\norm}{\lVert}{\rVert}
\title{A Game-Theoretic Unital Classification Theorem for $C^*$-Algebras}
\author{Jennifer Pi}
    \address{Mathematical Institute, University of Oxford, Oxford, United Kingdom}
    \email{jennifer.pi@maths.ox.ac.uk}
\author{Micha\l \, Szachniewicz}
    \address{Institute for Advanced Study, 1 Einstein Drive, Princeton, New Jersey, 08540, USA}
    \email{michals@ias.edu}
\author{Mira Tartarotti}
    \address{Mathematical Institute, University of Oxford, Oxford, United Kingdom}
    \email{tartarotti@maths.ox.ac.uk}
\date{}
\begin{document}

\begin{abstract}
    We study the complexity of the $KK$-equivalence relation on unital $C^*$-algebras, in the sense of descriptive set theory. 
    We prove that $KK$-equivalence is analytic, which in turn shows that the set of separable $C^*$-algebras satisfying the UCT is analytic. This allows us to prove a game-theoretic refinement of the unital classification theorem: there is a transfer of strategies between Ehrenfeucht-Fra\"iss\'e games (of various lengths) on classifiable $C^*$-algebras and their invariants.
\end{abstract}

\maketitle

\tableofcontents

\section{Introduction}

Classification of $C^*$-algebras is a long-running program which seeks to identify invariants which classify, up to isomorphism, a large class of separable $C^*$-algebras. 
This endeavor is known as the Elliott classification program, and seeks to prove $C^*$-algebraic analogues of the celebrated and influential structure and classification results for amenable von Neumann factors. (A \textit{factor} is a von Neumann algebra with trivial center, which serves as a basic irreducible building block.)
The classification program in the case of von Neumann algebras shows that each amenable factor is classified up to isomorphism only by its type (which describes the structure of projections) and a numerical invariant.

$C^*$-classification developed with close relations to these structural results of von Neumann algebras.
Indeed, the very first classification result for simple $C^*$-algebras is a precise analogue to Murray and von Neumann's characterization of the hyperfinite II$_1$ factor \cite{murray1943rings}, given by Glimm's classification of uniformly hyperfinite $C^*$-algebras \cite{glimm1960}.
The modern-day program mirrors Connes' groundbreaking work classifying amenable von Neumann algebras \cite{connes1976classification}, and has been an ongoing endeavor involving many mathematicians since the 1970s. This began with Elliott's result classifying approximately finite-dimensional (AF) algebras using a relatively simple invariant: their ordered $K_0$ groups \cite{elliott1976AF}.
The classification program continued with the Kirchberg-Phillips theorem \cite{kirchberg1995classif, phillips2000classif}, classifying a large class of algebras by their operator-algebraic $K$-theory. 
Finally, the general version of the unital classification theorem combines the Kirchberg-Phillips theorem with a stably finite classification theorem which was proved combining \cite{EGLN-classification, GLN-classif-1, GLN-classif-2, TWW-quasidiagonality}, and the many papers these rely on. 
We use the version from the new abstract approach to the stably finite classification \cite[Theorem A]{CGSTW-new-classificationI-2023}.

\begin{theorem}\label{intro thm: classification of C*-algebras}
Unital simple separable nuclear $\cZ$-stable $C^*$-algebras satisfying Rosenberg and Schochet’s universal coefficient theorem (UCT) are classified by the invariant $KT_u$ consisting of $K$-theory and traces.
\end{theorem}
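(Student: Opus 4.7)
The plan is to follow the standard Elliott intertwining strategy, dichotomized by the tracial behavior of the algebras. Let $A$ and $B$ be two algebras in the target class, and suppose $\phi \colon KT_u(A) \to KT_u(B)$ is an isomorphism of the invariant. One first separates into the purely infinite case, where the trace simplex is empty, and the stably finite case, where traces play a central role. In the purely infinite case, the Kirchberg--Phillips theorem already gives classification by $K$-theory (using UCT), so the argument reduces to checking that $KT_u$ agrees with the classical $K$-theoretic invariant there. The stably finite case is the substantive one.

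For the stably finite case, I would carry out the intertwining in two pillars. First, an \emph{existence} theorem: given any morphism $\psi \colon KT_u(A) \to KT_u(B)$ between admissible invariants, construct a unital $*$-homomorphism $\Psi \colon A \to B$ with $KT_u(\Psi) = \psi$. This is where UCT is essential: it lets one lift a $K$-theoretic datum to a $KK$-element, which is then combined with a choice of tracial map to produce a $KK$-class with prescribed total invariant. Nuclearity plus $\cZ$-stability then let one realize this $KK$-class by an honest $*$-homomorphism (via Kasparov-type absorption and asymptotic intertwining). Second, a \emph{uniqueness} theorem: any two unital $*$-homomorphisms $A \to B$ that agree on $KT_u$ are approximately unitarily equivalent. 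With both in hand, the standard Elliott back-and-forth argument constructs two sequences of $*$-homomorphisms whose compositions are approximately $\id_A$ and $\id_B$ respectively, and a point-norm limit yields the desired isomorphism $A \cong B$.

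The hard part is unambiguously the existence/uniqueness pair. These are not soft statements: they require the full force of the regularity theorems for $\cZ$-stable algebras (strict comparison, property (SI), tracial approximate divisibility), together with either quasidiagonality for traces in the stably finite setting (which is where the TWW input enters) or the abstract two-colored classification technology from \cite{CGSTW-new-classificationI-2023}. In the abstract approach, one replaces the direct existence/uniqueness by a more categorical statement about the comparison map between $KK$-theoretic invariants and morphisms of invariants, using that $\cZ$-stability and UCT force the model categorical setup to be well-behaved.

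Rather than reproducing these arguments, I would take the black-box route appropriate to the present paper: invoke \cite[Theorem A]{CGSTW-new-classificationI-2023} directly, having first verified that the invariant $KT_u$ used there is naturally isomorphic (as a functor on the class in question) to the one we use. That naturality check is routine but necessary, since small variations in bookkeeping (pairing, order structure on $K_0$, unit class) appear across the literature; once it is in place, Theorem~\ref{intro thm: classification of C*-algebras} is immediate.
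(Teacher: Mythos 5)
Your proposal lands in the same place as the paper: Theorem~\ref{intro thm: classification of C*-algebras} is not proved here but imported wholesale as \cite[Theorem A]{CGSTW-new-classificationI-2023}, which is exactly the black-box route you settle on in your final paragraph. The preceding sketch of the existence/uniqueness intertwining machinery is accurate background but plays no role in the paper's treatment, so the only substantive content of your proof coincides with the paper's citation.
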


We hereafter refer to this class of $C^*$-algebras as \textit{classifiable algebras}. The class directly parallels Connes' classification of amenable factors; indeed, the assumptions of unitality, simplicity, and separability are immediate in the case of von Neumann factors, and nuclearity is equivalent to amenability in the von Neumann factor setting.
The term ``$\cZ$-stable" refers to tensorial absorption of a particular $C^*$-algebra called the Jiang-Su algebra \cite{MR1680321}, denoted by $\cZ$.
This assumption is crucial since the $K$-theory of an algebra $A$ and $A \otimes \cZ$ are identical. 
Finally, the assumption of the UCT allows one to connect Kasparov's $KK$-theory \cite{Kasparov1981-origKK} with $K$-theory, which is necessary to phrase the invariant in terms of $K$-theory. 
Indeed, the aforementioned Kirchberg-Phillips theorem shows that the purely infinite classifiable algebras are isomorphic if and only if they are $KK$-equivalent. 
We will return to discuss the UCT assumption shortly after stating Theorem \ref{theorem: intro uniform Borelness of games}.
\\

Theorem \ref{intro thm: classification of C*-algebras} says that two classifiable $C^*$-algebras are isomorphic if and only if their invariants are isomorphic.
It is sensible to ask whether the classification theorem can be extended to coarser equivalence relations between classifiable $C^*$-algebras, respectively between their invariants. We answer precisely this question in the positive, for a family of equivalence relations arising from the infinitary logic $L_{\omega_1, \omega}$.
When $L$ is a discrete first-order language, the set of $L_{\omega_1, \omega}$-formulas is obtained by closing the set of first-order $L$-formulas under countable conjunctions and disjunctions. The \emph{quantifier-rank} of an $L_{\omega_1, \omega}$-formula $\varphi$ is a countable ordinal $\qr(\varphi)<\omega_1$ corresponding roughly to the maximal depth of nested quantifier-chains in $\varphi$. Setting $A\equiv_\alpha B$ whenever $A$ and $B$ are $L$-structures that agree on $L_{\omega_1, \omega}$-sentences of quantifier rank at most $\alpha$ yields a sequence $(\equiv_\alpha)_{\alpha<\omega_1}$ of refining equivalence relations. Scott's isomorphism theorem \cite{Scott1965-SCOLWD} states that the intersection of this sequence coincides with isomorphism, that is, countable $L$-structures $A$ and $B$ are isomorphic (written $A\cong B$) if and only if $A\equiv_\alpha B$ for every $\alpha<\omega_1$.
To study metric structures such as $C^*$-algebras in the context of (infinitary) logic, we use \emph{metric model theory} as developed by Ben Yaacov, Berenstein, Henson and Usvyatsov  \cite{MR2436146}. The corresponding generalization of $L_{\omega_1, \omega}$ due to Ben Yaacov, Doucha, Nies and Tsankov \cite{yaacov2017metric} serves as the basic framework of this paper, providing in particular a metric version of the equivalence relations $(\equiv_\alpha)_{\alpha<\omega_1}$ and of Scott's isomorphism theorem.

In \cite{games_on_AF_algebras}, the authors prove the following generalisation of Elliott's classification of AF-algebras, in terms of infinitary metric logic.
\begin{theorem}[{\cite[Theorem A]{games_on_AF_algebras}}]
    For all unital AF-algebras $A, B$, and any $\alpha < \omega_1$,
    \[
        (K_0(A), K_0(A)_+, [1_A]_0) \equiv_{\omega \cdot \alpha} (K_0(B), K_0(B)_+, [1_B]_0) \quad \implies \quad  A \equiv_\alpha B.
    \]
\end{theorem}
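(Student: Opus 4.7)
My plan is to transfer, by transfinite induction on $\alpha<\omega_1$, a winning strategy for Duplicator in the $\omega\cdot\alpha$-EF game on the scaled ordered $K_0$-groups into a winning strategy for Duplicator in the $\alpha$-EF game on the AF-algebras themselves. The base case $\alpha=0$ reduces to the observation that the quantifier-free approximate type of a tuple in a unital AF-algebra is determined by the $K_0$-classes and the finite-dimensional envelope the tuple approximately generates, a consequence of the classification of finite-dimensional $C^*$-algebras by their scaled ordered $K_0$. The limit stage is handled by taking unions of partial approximate intertwinings, using the continuity of $K_0$ along inductive limits. The whole argument is a game-theoretic implementation of Elliott's classical intertwining.

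\textbf{The successor step.} Fix AF-presentations $A=\varinjlim_n A_n$ and $B=\varinjlim_m B_m$ with $A_n, B_m$ finite-dimensional, and let $\tau$ be Duplicator's winning strategy in the $\omega\cdot(\beta+1)$-EF game on the invariants. When Spoiler plays $a\in A$ in the first round of the $C^*$-game, Duplicator selects an $\epsilon$-approximation $a'\in A_n$ for large $n$. The inclusion $A_n\hookrightarrow A$ is classified, up to $*$-isomorphism over $A$, by the $K_0$-classes $[p_1]_0,\ldots,[p_r]_0\in K_0(A)_+$ of the minimal central projections of $A_n$ together with the Murray--von Neumann ranks of the simple summands, and $\sum r_i[p_i]_0=[1_A]_0$. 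Duplicator plays these $r$ classes as consecutive Spoiler-moves in the invariant-game and uses $\tau$ to obtain matched classes $[q_1]_0,\ldots,[q_r]_0\in K_0(B)_+$ with the same order relations, multiplicities, and sum $[1_B]_0$. At a sufficiently deep stage $m$, these $[q_i]_0$ lift to minimal central projections of a finite-dimensional $B_m\subseteq B$ with $B_m\cong A_n$, and Duplicator plays the image $b\in B_m$ of $a'$ under this isomorphism. The remaining moves of the first $\omega$-block are used to refine the tolerance $\epsilon_k\to 0$ and to re-capture previously played tuples inside the newly enlarged finite-dimensional subalgebras. Applying the inductive hypothesis to the remaining $\omega\cdot\beta$-tail of $\tau$ produces Duplicator's strategy for the continuation $\beta$-game on $(A,a)$ versus $(B,b)$.

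\textbf{Main obstacle.} The real difficulty is coherence across rounds. The matched $K_0$-data do not by themselves yield a $*$-isomorphism of finite-dimensional subalgebras: one must construct, inductively as the game progresses, an approximate intertwining $A_n\to B_m$ compatible with all previously chosen matchings, in the spirit of Elliott's AF classification. The $\omega$-block per $C^*$-move is precisely what is needed to (i) feed in the finitely many new projection classes, (ii) shrink the tolerances along a summable sequence, and (iii) re-embed all earlier elements into the newly enlarged finite-dimensional subalgebras, so that at limit ordinals a genuine partial isomorphism emerges from the accumulated data. Arranging that this can be carried out with exactly $\omega$ invariant-moves per $C^*$-round, while still respecting the given strategy $\tau$, is the technical heart of the argument and is exactly the source of the factor $\omega$ in the theorem.
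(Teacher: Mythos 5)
This statement is imported verbatim from the cited reference; the present paper contains no proof of it, describing the original argument only as ``combining the intertwining argument of the Elliott classification program with a game-theoretic description of $\equiv_\alpha$.'' Your plan is precisely that argument, so at the level of strategy you match the source. Two concrete points separate your text from an actual proof. First, a unital embedding $A_n\hookrightarrow A$ of a finite-dimensional algebra is classified (up to unitary equivalence) by the $K_0$-classes of the \emph{minimal} projections of the simple summands, not by the minimal central projections together with the matrix ranks: after Duplicator matches $[p_i]_0\mapsto q_i$ in the invariant game, the existence of $f_i\in K_0(B)_+$ with $n_if_i=q_i$ is an existential statement about $K_0(B)$ and is not a quantifier-free consequence of the matching, so Duplicator must feed the minimal-projection classes themselves into the invariant game. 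Second, the ``coherence across rounds'' that you flag as the technical heart is not an obstacle to be noted but the entire content of the proof: the transfinite induction has to be formulated for positions $(Aa,Bb)$ together with an explicit inductive datum --- nested finite-dimensional subalgebras approximately containing the played tuples, matched $K_0$-data for their Bratteli inclusions, a summable sequence of tolerances, and the unitaries (supplied by the uniqueness half of Elliott's argument) correcting each newly constructed embedding to extend the previous one --- and one must verify that maintaining this datum costs only finitely many invariant-game moves per $C^*$-round, which is what justifies the factor $\omega$. As written, your proposal states what this datum must achieve without constructing it, so it remains a correct plan rather than a proof.
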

The proof from \cite{games_on_AF_algebras} combines the intertwining argument of the Elliott classification program with a game-theoretic description of $\equiv_{\alpha}$. 
This naturally leads to the question of whether the same can be done for unital Kirchberg algebras satisfying the UCT, see \cite[Question 1.2]{games_on_AF_algebras}. 
In this paper, we not only answer this question, but also extend the result to all classifiable $C^*$-algebras, and establish the converse, answering in particular \cite[Question 4.5]{games_on_AF_algebras}.
Below, we write $KT_u(\cdot)$ for the functor taking a $C^*$-algebra to its classifying invariant consisting of $K$-theory and traces. We lay this out more explicitly in Section \ref{subsec: invariant}, where we also describe a metric logic language for classifying invariants of $C^*$-algebras. Our main result is the following.
\begin{main}\label{main1: classifiable}
    There exist functions $\theta \colon \omega_1 \rightarrow \omega_1$ and $\theta' \colon \omega_1 \rightarrow \omega_1$ such that for every pair of classifiable $C^*$-algebras $A$ and $B$ and every $\alpha < \omega_1$,
    \begin{align*}
        &KT_u(A) \equiv_{\theta(\alpha)} KT_u(B) \quad &\implies \quad &A \equiv_\alpha B,\\
        &A \equiv_{\theta'(\alpha)} B \quad &\implies \quad &KT_u(A) \equiv_\alpha KT_u(B).
    \end{align*}
        
\end{main}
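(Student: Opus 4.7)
\emph{Proof plan.} In the metric $L_{\omega_1,\omega}$ framework of \cite{yaacov2017metric}, $M\equiv_\alpha N$ is equivalent to the existence of a winning strategy for player II in a length-$\alpha$ approximate Ehrenfeucht-Fra\"iss\'e game on $M$ and $N$. I would prove the two implications of Theorem~\ref{main1: classifiable} by explicitly transferring winning strategies in both directions, refining the Elliott intertwining argument underlying Theorem~\ref{intro thm: classification of C*-algebras} into a move-by-move, uniformly Borel procedure. The philosophy is the same as in the AF case of \cite{games_on_AF_algebras}, but without inductive-limit structure to lean on; the role of the concrete Bratteli-diagram intertwining is taken over by the abstract classification theorem together with the paper's uniform-Borelness analysis.

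\textbf{Forward direction ($\theta$).} Given a winning strategy for player II in the EF game of length $\theta(\alpha)$ on $KT_u(A)$ and $KT_u(B)$, I would construct a corresponding strategy at the algebra level. When player I presents a tuple $\bar a\in A^n$ with tolerance $\varepsilon>0$, the first step is to extract from $\bar a$ a finite packet of invariant data — projections and unitaries formed by functional calculus on polynomial expressions in $\bar a$, together with a finite sample of tracial values on such polynomials — approximating $\bar a$ to within $\varepsilon$. This produces a tuple on $KT_u(A)$, to which one applies the invariant strategy to obtain a matching tuple on $KT_u(B)$; the latter is then realized as a tuple $\bar b\in B^n$ through the existence and uniqueness theorems undergirding Theorem~\ref{intro thm: classification of C*-algebras}. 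The ordinal $\theta(\alpha)$ is defined by ordinal recursion and absorbs both the multiple invariant rounds consumed per algebra round and the standard back-and-forth bookkeeping at limit stages.

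\textbf{Reverse direction ($\theta'$).} This direction is technically easier. It rests on the fact that $KT_u(A)$ is interpretable in $A$ in the sense of metric logic (as laid out in Section~\ref{subsec: invariant}): $K_0$-classes arise from formal differences of projections, $K_1$-classes from unitaries, and tracial values from tracial states evaluated on polynomials, all with uniformly bounded quantifier complexity. A move by player I on the invariants can then be simulated by finitely many moves on the algebras — asking player II's algebra-level strategy for the projections, unitaries, or tracial witnesses that realize the requested invariant datum — whose responses reassemble, via functoriality of $KT_u$, into a valid invariant-level reply. The function $\theta'$ is obtained essentially by multiplying $\alpha$ by the bounded interpretation depth.

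\textbf{Main obstacle.} The crux is the uniform Borel (indeed, definable) character of the lift in the forward direction. The classification theorem furnishes lifts abstractly, but the game-theoretic transfer demands that at each round the response be chosen measurably from the partial history, and this must remain coherent across $\omega_1$-many rounds. Securing this requires a Borel-uniform version of the existence and uniqueness theorems from \cite{CGSTW-new-classificationI-2023}, which is exactly what the paper's analyticity analysis of $KK$-equivalence and of the UCT class — together with the uniform Borelness-of-games result advertised before Theorem~\ref{theorem: intro uniform Borelness of games} — is designed to supply. Once this uniform selection is in place, iteration through countably many rounds and limit-ordinal bookkeeping along standard back-and-forth lines completes the argument in both directions.
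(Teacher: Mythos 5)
Your plan is a constructive, move-by-move strategy transfer through the intertwining machinery of classification; the paper's proof is deliberately the opposite. It feeds three facts --- the set of classifiable algebras is analytic (via Theorem \ref{main4: KK-equivalence} together with Theorem \ref{thm: USSN Z-stable is Borel}), the map $KT_u$ is Borel on codes (Corollary \ref{corollary: KT_u is a Borel invariant}), and the game relations $(\equiv_\alpha^{bf})_{\alpha<\omega_1}$ are uniformly Borel (Theorem \ref{theorem:back-and-forth_is_uniformly_borel}) --- into the abstract transfer results Theorem \ref{thm: DST_main} and Theorem \ref{thm: DST_main_2}, whose proofs are a compactness argument resting on the non-analyticity of $\WO$ (Lemma \ref{lemma:DST:Barwise_compactness}). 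This produces $\theta$ and $\theta'$ non-constructively: no strategy is ever built and no bound on $\theta(\alpha)$, $\theta'(\alpha)$ is obtained.

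The genuine gap in your proposal is the ``main obstacle'' you name but do not close, and nothing in the paper closes it for you. A round-by-round transfer in the forward direction requires a measurable, history-coherent selection of lifts from partial invariant data to partial tuples in $B$; this would amount to a definable or Borel-uniform version of the existence and uniqueness theorems of \cite{CGSTW-new-classificationI-2023}. The paper's analyticity results do not supply this: analyticity of $KK$-equivalence and of the UCT class, and Borelness of $KT_u$, are statements about sets of codes, not selection principles for morphisms or witnessing tuples, and the paper states explicitly that its argument is non-constructive precisely because such uniformity is unavailable. Two further steps would fail as written. First, an arbitrary tuple $\bar a\in A^n$ in a classifiable algebra is not approximated to within $\varepsilon$ by projections, unitaries and tracial samples; the classification theorems classify \emph{morphisms} by their action on the invariant, they do not encode arbitrary elements into $KT_u$-data, so the opening move of your forward strategy has no justification (already in the AF case \cite{games_on_AF_algebras} needs $\omega\cdot\alpha$ invariant rounds per $\alpha$ algebra rounds and leans heavily on the inductive-limit structure, which is absent here). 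Second, the reverse direction's claim that $KT_u(A)$ is interpretable in $A$ with uniformly bounded quantifier depth is unproved: $K_0$ involves projections in all matrix amplifications and a Grothendieck quotient, and the paper only establishes Borelness of $KT_u$ at the level of codes, which is weaker --- and is exactly why it routes this direction through Theorem \ref{thm: DST_main_2} as well. Finally, in the metric setting $\equiv_\alpha$ and $\equiv_\alpha^{bf}$ do not coincide, so even the identification in your first sentence must be replaced by the one-directional comparison of Fact \ref{fact: bnf pseudo-metric properties}, as in the proof of Corollary \ref{cor: infinitary classification}.
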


\subsection{Methods}
In contrast to \cite{games_on_AF_algebras}, rather than going directly through the intertwining arguments of $C^*$-classification, our approach reduces the proof of Theorem \ref{main1: classifiable} to checking that the set of classifiable $C^*$-algebras, the functor $KT_u(\cdot)$, and the relations $\equiv_\alpha$ satisfy certain descriptive `tameness' conditions.
This allows us to handle all classifiable $C^*$-algebras at once, using Theorem \ref{intro thm: classification of C*-algebras}.
More precisely, the following theorem proved in Section \ref{subsection: barwise} is the descriptive set theoretic heart of our method.

\begin{main}\label{main2: DST transfer}
    Let $X, Y$ be analytic\footnote{An analytic set is the image of a Borel set under a Borel map.} subsets of a standard Borel space and let $K \colon X\to Y$ be an analytic map. Assume that $E\subseteq X^2$ and $F\subseteq Y^2$ are Borel equivalence relations, and $(E_\alpha)_{\alpha<\omega_1}$ and $(F_\alpha)_{\alpha<\omega_1}$ are refining uniformly analytic equivalence relations on $X$ and $Y$ respectively, and denote $E_{\infty} = \bigcap_{\alpha <\omega_1} E_\alpha, F_{\infty} = \bigcap_{\alpha <\omega_1} F_\alpha$. Suppose 
    \[ \forall x,x'\in X: \ \begin{cases}
         K(x) \  F_{\infty} \ K(x') \quad &\implies x \ E \ x' ,\\
         x \ E_\infty \ x'\quad &\implies K(x) \ F \ K(x').
    \end{cases}       
    \]
    Then there exist $\alpha,\beta<\omega_1$ such that 
    \[ \forall x,x'\in X: \ \begin{cases}
         K(x) \  F_{\alpha} \ K(x') \quad &\implies x \ E \ x' ,\\
         x \ E_{\beta} \ x'\quad &\implies K(x) \ F \ K(x').
    \end{cases}
    \]
\end{main}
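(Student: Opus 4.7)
My plan is to reduce both implications to a standard $\Sigma^1_1$-boundedness argument using the Luzin--Sierpi\'nski theorem for analytic subsets of the space $\WO$ of well-orders on $\mathbb{N}$. The input I expect from the hypothesis that $(E_\alpha)$ and $(F_\alpha)$ are \emph{uniformly analytic} is a pair of analytic sets $\tilde E \subseteq X^2 \times \LO$ and $\tilde F \subseteq Y^2 \times \LO$ whose fibres recover the respective equivalence relations on $\WO$: for every $\ell \in \WO$ of rank $|\ell| = \alpha$, one has $(x,x') \in E_\alpha$ iff $(x,x',\ell) \in \tilde E$, and likewise for $F$. Under such a parameterization, $E_\infty$ and $F_\infty$ are expressed by universal quantification over $\WO$.

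For the first implication, I would consider the projection
\[
T \;=\; \bigl\{\ell \in \LO \,:\, \exists\, (x,x') \in X^2,\; \neg(x\,E\,x') \;\wedge\; (K(x), K(x'), \ell) \in \tilde F\bigr\}.
\]
Since $E$ is Borel, $K$ is analytic, and $\tilde F$ is analytic, $T$ is $\Sigma^1_1$. The hypothesis $K(x)\,F_\infty\,K(x') \Rightarrow x\,E\,x'$ says that whenever $\neg(x\,E\,x')$ there is some $\alpha < \omega_1$ with $(K(x), K(x')) \notin F_\alpha$; hence every $\ell \in T \cap \WO$ has rank below such an $\alpha$ (depending on the witness). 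Luzin--Sierpi\'nski boundedness then produces a single $\gamma < \omega_1$ with $\sup\{|\ell| : \ell \in T \cap \WO\} \le \gamma$. Setting $\alpha_0 := \gamma + 1$ and fixing any $\ell_0 \in \WO$ of rank $\alpha_0$, the assumption $(K(x), K(x')) \in F_{\alpha_0}$ together with $\neg(x\,E\,x')$ would witness $\ell_0 \in T$ with $|\ell_0| > \gamma$, a contradiction; so $K(x)\,F_{\alpha_0}\,K(x') \Rightarrow x\,E\,x'$.

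The second implication is strictly symmetric. Using $\tilde E$ and the Borel relation $F$ in place of $\tilde F$ and $E$, the analogous analytic set
\[
T' \;=\; \bigl\{\ell \in \LO \,:\, \exists\, (x,x') \in X^2,\; \neg\bigl(K(x)\,F\,K(x')\bigr) \;\wedge\; (x, x', \ell) \in \tilde E\bigr\}
\]
again has bounded rank on $\WO$, yielding $\beta < \omega_1$ such that $x\,E_\beta\,x' \Rightarrow K(x)\,F\,K(x')$.

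The main obstacle I anticipate is pinning down the precise meaning of ``uniformly analytic'' as the authors use it and verifying that pullback along the analytic (rather than Borel) map $K$ preserves $\Sigma^1_1$-definability of the sets above. This should reduce to rewriting $(K(x), K(x'), \ell) \in \tilde F$ via existential quantification over the analytic graph of $K$, a routine manipulation within the projective hierarchy. Once that point is fixed, the whole proof collapses to two symmetric applications of $\Sigma^1_1$-boundedness, and the functions $\alpha \mapsto \alpha_0, \beta$ in the statement can be taken constant.
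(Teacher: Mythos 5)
Your overall strategy is the right one --- the paper's proof of Theorems \ref{thm: DST_main} and \ref{thm: DST_main_2} likewise forms an analytic set of linear orders by projecting out the witnesses $(x,x')$ and then invokes Lusin--Sierpi\'nski --- but there is a genuine gap at the step where you invoke boundedness. You apply ``Luzin--Sierpi\'nski boundedness'' to $T\cap\WO$, but $\Sigma^1_1$-boundedness requires an \emph{analytic subset of} $\WO$: since $\WO$ is only coanalytic, $T\cap\WO$ is not known to be analytic, and you have not shown $T\subseteq\WO$. The per-witness observation preceding it (``whenever $\neg(x\,E\,x')$ there is some $\alpha$ with $(K(x),K(x'))\notin F_\alpha$'') only says that each well-order in $T$ has \emph{some} countable rank, which is vacuous and does not uniformize anything.

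The missing ingredient is exactly the part of the definition of ``uniformly analytic'' that your guessed parameterization omits (Definition \ref{definition:uniformly_borel_equivalence}): the analytic set $\tilde F\subseteq Y^2\times\LO$ is required to be \emph{refining} over all of $\LO$ (not merely correct on $\WO$) and to satisfy $\tilde F_{\omega^*}=F_\infty$. With these clauses the gap closes: if $\ell\in T$ were a non-well-order, then $\omega^*$ embeds into $\ell$, so the refining property gives $(K(x),K(x'))\in\tilde F_\ell\subseteq\tilde F_{\omega^*}=F_\infty$ for the witnessing pair, whence $x\,E\,x'$ by hypothesis, contradicting $\neg(x\,E\,x')$. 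Hence $T\subseteq\WO$, so $T$ is an analytic subset of $\WO$ and boundedness yields your $\gamma$. The paper packages the same fact contrapositively as Lemma \ref{lemma:DST:Barwise_compactness}: a refining analytic family over $\LO$ that is nonempty at every countable ordinal is nonempty at $\omega^*$. Without the refining and $\omega^*$ clauses the argument does not go through --- this is precisely why the paper works with $\equiv^{bf}_\alpha$ rather than $\equiv_\alpha$ later on. Your second implication has the identical issue and the identical fix; once $T\subseteq\WO$ and $T'\subseteq\WO$ are established, the rest of your write-up (analyticity of $T$ and $T'$ via the analytic graph of $K$, and the concluding contradiction) is correct.
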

The term `refining' here means that $E_\alpha\supseteq E_\beta$ whenever $\alpha<\beta$, and `uniformly analytic' means the equivalence relations extend to a family of analytic equivalence relations indexed by the set of countable linear orders (see Definition \ref{definition:uniformly_borel_equivalence}).
Our proof is non-constructive, and no bound on $\alpha$ and $\beta$ can be extracted in general. 
For our applications to classification of $C^*$-algebras, we derive the following corollary of Theorem \ref{main2: DST transfer} and the metric Scott's isomorphism theorem from \cite{yaacov2017metric}.  
Here $L_i$ is a metric language and $\Mod_\omega(L_i)$ denotes a suitable encoding of the set of isomorphism classes of separable $L_i$-structures, for $i=1,2$. The set $\Mod_\omega(L_i)$ carries a natural Borel structure, described in Section \ref{subsection: Borel categories}. 
In the formulation below we identify a code for a separable $L_i$-structure $A \in \Mod_\omega(L_i)$ with the structure it encodes.

\begin{maincor} \label{corollary:intro comparison of Scott degrees}
    Let $X\subseteq\Mod_\omega(L_1)$ and $Y\subseteq\Mod_\omega(L_2)$ be analytic subsets and let $K \colon X\to Y$ be an analytic map such that 
    \[ K(A) \cong K(B) \iff A \cong B. \]
    Then there exist maps $\theta\colon\omega_1\to\omega_1, \, \theta'\colon\omega_1\to\omega_1$ such that 
    \begin{equation*}
        \forall A,B\in X:
        \begin{cases}
            K(A) \equiv_{\theta(\alpha)} K(B)&\implies A\equiv_\alpha B, \\
            A\equiv_{\theta'(\alpha)}B&\implies K(A)\equiv_\alpha K(B).
        \end{cases}
    \end{equation*}
\end{maincor}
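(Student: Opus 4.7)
The plan is to invoke Theorem \ref{main2: DST transfer} twice for each countable ordinal $\alpha$ --- once to produce $\theta(\alpha)$ and once to produce $\theta'(\alpha)$ --- with the coarse Borel equivalence relation $E$ (respectively $F$) placed strategically on the side where the output bound should land. The refining uniformly analytic family $(\equiv_\beta)_{\beta<\omega_1}$ on each of $\Mod_\omega(L_1)$ and $\Mod_\omega(L_2)$, established earlier in the paper (Section \ref{subsection: Borel categories}), has Borel members $\equiv_\beta$ for each fixed $\beta<\omega_1$, and by the metric Scott isomorphism theorem of Ben Yaacov--Doucha--Nies--Tsankov \cite{yaacov2017metric} satisfies $\bigcap_{\beta<\omega_1}\equiv_\beta \, = \, \cong$. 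So setting $E_\beta := {\equiv_\beta}\cap X^2$ and $F_\beta := {\equiv_\beta}\cap Y^2$ yields $E_\infty = F_\infty = {\cong}$ on $X$ and $Y$ respectively, and the family satisfies the refining and uniformly analytic hypotheses of Theorem \ref{main2: DST transfer}.

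Now fix $\alpha<\omega_1$. For $\theta(\alpha)$ I apply Theorem \ref{main2: DST transfer} with the above $(E_\beta),(F_\beta)$ and with the Borel equivalence relations $E := {\equiv_\alpha}\cap X^2$ and $F := {\equiv_0}\cap Y^2$. The two hypotheses reduce to $K(A)\cong K(B)\Rightarrow A\equiv_\alpha B$ and $A\cong B\Rightarrow K(A)\equiv_0 K(B)$, both of which follow immediately from the standing assumption $K(A)\cong K(B)\Leftrightarrow A\cong B$ combined with the inclusion ${\cong}\subseteq {\equiv_\beta}$ (valid for every $\beta$). The theorem then returns ordinals $\alpha_0,\beta_0<\omega_1$; setting $\theta(\alpha):=\alpha_0$ gives $K(A)\equiv_{\theta(\alpha)}K(B)\Rightarrow A\equiv_\alpha B$. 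The symmetric application with $E := {\equiv_0}\cap X^2$ and $F := {\equiv_\alpha}\cap Y^2$ verifies the same two hypotheses by the same argument and produces $\theta'(\alpha)$ from the analogous ``$\beta_0$'' output.

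The main subtlety is purely organizational: the two ordinals appearing in the conclusion of Theorem \ref{main2: DST transfer} are a priori unrelated, so each of the two implications in Corollary \ref{corollary:intro comparison of Scott degrees} must be extracted from its own invocation of Theorem \ref{main2: DST transfer}, with $E$ and $F$ reset each time --- a single invocation does not suffice. The genuinely non-trivial content, namely Scott's intersection identity in the metric setting and the uniform Borelness/analyticity of the Scott hierarchy on $\Mod_\omega(L_i)$, is invoked here as a black box from earlier sections of the paper and from \cite{yaacov2017metric}, and the maps $\theta,\theta'$ produced inherit the non-constructive character of Theorem \ref{main2: DST transfer}.
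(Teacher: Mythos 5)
Your overall architecture — two invocations of the transfer machinery per ordinal $\alpha$, one producing $\theta(\alpha)$ and one producing $\theta'(\alpha)$, with the single ``coarse'' relation placed on the side where the bound should land — is exactly how the paper proceeds (it uses Theorems \ref{thm: DST_main} and \ref{thm: DST_main_2} separately, which together are Theorem \ref{main2: DST transfer}). But there is a genuine gap at the load-bearing step: you take the refining family to be the plain quantifier-rank relations $(\equiv_\beta)_{\beta<\omega_1}$ and assert that this family is uniformly analytic with Borel members, citing Section \ref{subsection: Borel categories}. That section is about Borel categories of $L$-structures and proves nothing of the sort, and nowhere does the paper establish uniform analyticity (or even Borelness of individual members) of $\equiv_\beta$ in the metric setting. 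Indeed $A\equiv_\beta B$ demands agreement on \emph{all} $L_{\omega_1,\omega}$-sentences of quantifier rank $\le\beta$ — an uncountable family with arbitrary uniformly continuous connectives — so there is no evident Borel parametrization indexed by $\LO$. This is precisely the obstruction the paper flags in the introduction (``Rather than applying Theorem \ref{main2: DST transfer} directly to the relations $\equiv_\alpha$\dots we work with $\equiv_\alpha^{bf}$'') and spends Section \ref{subsection:metric_games} circumventing.

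The repair is the paper's actual route: run the transfer theorems with the back-and-forth relations $(\equiv_\beta^{bf})_{\beta<\omega_1}$ relative to a fixed universal weak modulus. Their uniform Borelness is Theorem \ref{theorem:back-and-forth_is_uniformly_borel} (via determinacy of the closed games $\EFD_{\prec,\varepsilon}$ and Suslin's theorem; see Example \ref{example:bnf_uniformly_Borel}), and the identity $\bigcap_\beta\equiv_\beta^{bf}={\cong}$ is Fact \ref{fact: universal modlus} — it requires the universality of the modulus, not just the metric Scott theorem you quote. One then obtains $K(A)\equiv^{bf}_{\theta(\alpha)}K(B)\implies A\equiv_\alpha B$ and $A\equiv^{bf}_{\theta'(\alpha)}B\implies K(A)\equiv_\alpha K(B)$, and converts the hypothesis sides back to the plain relations using the one-directional inclusion $\equiv_\beta\;\subseteq\;\equiv_\beta^{bf}$ from Fact \ref{fact: bnf pseudo-metric properties}; note this inclusion goes the right way only because $\equiv^{bf}$ sits on the hypothesis side of each implication, which is why the conclusion of Corollary \ref{corollary:intro comparison of Scott degrees} survives the substitution. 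In the discrete setting Karp's theorem would let you identify $\equiv_\beta$ with $\equiv_\beta^{bf}$ and your argument would close up, but the paper is explicit that this identification fails for metric structures, so the detour through $\equiv^{bf}$ cannot be treated as a black box already available for $\equiv_\beta$ itself.
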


Rather than applying Theorem \ref{main2: DST transfer} directly to the relations $\equiv_\alpha$ in order to derive the above corollary, we work with a family of equivalence relations, denoted $\equiv_\alpha^{bf}$, coming from so-called \emph{back-and-forth games}, or \emph{Ehrenfeucht-Fra\"iss\'e games} \cite{fraisse1955quelques, ehrenfeucht1957application}. In the discrete setting, given two structures $A$ and $B$ of the same language, the \emph{dynamic Ehrenfeucht-Fra\"iss\'e game} with game clock $\alpha<\omega_1$, denoted $\EFD_\alpha(A,B)$, is played between two players, the \emph{spoiler} and the \emph{duplicator}. Roughly speaking, the {duplicator} wants to prove similarity of $A$ and $B$ and the spoiler wants to prevent this. When the duplicator has a winning strategy in $\EFD_\alpha(A,B)$, we write $A\equiv_\alpha^{bf} B$. The game clock $\alpha$ gives the spoiler some control over the length of the game. A classical result of Karp \cite{KARP2014407} establishes, in the discrete setting, that $\equiv_\alpha^{bf}$ coincides with $\equiv_\alpha$. 
We describe in Section \ref{subsection:metric_games} a version of $\EFD_\alpha(A,B)$ when $A$ and $B$ are structures of a fixed \emph{metric} language, based on the back-and-forth pseudo-distance of \cite{yaacov2017metric}. Several other back-and-forth games for metric structures have been considered in the literature \cite{games_on_AF_algebras, hanson2023approximate, HIRVONEN2022103123}, and a similar treatment of the back-and-forth pseudo-distance in terms of games was recently given in \cite{hirvonen2024ehrenfeuchtfraissegamescontinuousfirstorder}.  
Again writing $A\equiv_\alpha^{bf} B$ if the duplicator has a winning strategy in (the metric version of) $\EFD_\alpha(A,B)$, it is still true that $A\equiv_\alpha B\implies A\equiv_\alpha^{bf} B$ (see Fact \ref{fact: bnf pseudo-metric properties}), but for technical reasons\footnote{ $\equiv_\alpha^{bf}$ has an implicitly fixed additional parameter $\Omega$ that is a \textit{universal weak modulus} as in \cite[Section 5]{yaacov2017metric} and $\equiv_\alpha^{bf}$ coincides with agreement on formulas of quantifier rank $\alpha$ respecting $\Omega$. This is described in more detail in Section \ref{subsection:metric_games}.} arising in the metric setting, $\equiv_\alpha$ and $\equiv_\alpha^{bf}$ do not coincide in general. 
Crucially for the proof of Corollary \ref{corollary:intro comparison of Scott degrees}, we show the following.
\begin{main}\label{theorem: intro uniform Borelness of games}
    Let $L$ be a countable metric language. Then the refining family of equivalence relations $(\equiv_\alpha^{bf})_{\alpha<\omega_1}$ on $\Mod_\omega(L)$
    is uniformly Borel, and in particular uniformly analytic.
\end{main}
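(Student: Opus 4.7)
The plan is to extend the family $(\equiv_\alpha^{bf})_{\alpha<\omega_1}$ to a family $(\equiv_\lambda^{bf})_{\lambda\in\LO}$ indexed by the standard Borel space $\LO$ of all countable linear orders, and to show that the resulting set
\[R=\{(A,B,\lambda)\in\Mod_\omega(L)^2\times\LO:A\equiv_\lambda^{bf}B\}\]
is Borel. When $\lambda\in\LO$ is well-founded of order type $\alpha$, this extension should yield $\equiv_\lambda^{bf}=\equiv_\alpha^{bf}$ via the canonical order isomorphism $\lambda\xrightarrow{\sim}\alpha$.

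First, I would define the game $\EFD_\lambda(A,B)$ by allowing $\lambda$ in place of an ordinal as the clock: positions are triples $(\bar a,\bar b,\bar\ell)$ with $\bar\ell$ a strictly $\lambda$-decreasing sequence in $\lambda$; at each round the spoiler picks $\ell^*<\min\bar\ell$ together with a side and an element, and the duplicator replies on the other side. Set $A\equiv_\lambda^{bf}B$ iff the duplicator has a winning strategy for maintaining the universal weak modulus $\Omega$-partial-isomorphism condition at every finite stage. For well-founded $\lambda$ this reduces to the ordinal game, yielding $\equiv_\lambda^{bf}=\equiv_\alpha^{bf}$ when $\lambda$ has order type $\alpha$.

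The central step is to show $R$ is Borel. The duplicator's winning condition is a countable intersection of Borel conditions (one per finite stage), hence closed in the Polish space of plays, so by Gale--Stewart the game is determined. I then claim that $R$ is both analytic and coanalytic. For analyticity: a winning duplicator strategy is a map from the countable tree of positions to the move space, encoded as an element of a standard Borel strategy space $\mathcal{S}$; the predicate ``$\sigma\in\mathcal{S}$ is a winning duplicator strategy in $\EFD_\lambda(A,B)$'' is a Borel condition in $(A,B,\lambda,\sigma)$, and existential projection over $\sigma$ yields $R\in\boldsymbol{\Sigma}^1_1$. For coanalyticity: by determinacy, $R^c$ equals the set where the spoiler wins, which is $\boldsymbol{\Sigma}^1_1$ by the symmetric argument, giving $R\in\boldsymbol{\Pi}^1_1$. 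Hence $R\in\boldsymbol{\Delta}^1_1$, and by the Lusin--Souslin theorem $R$ is Borel. The refining property follows since, for any embedding $\lambda\hookrightarrow\lambda'$, a winning duplicator strategy for the game on $\lambda'$ restricts to one for $\lambda$.

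The main obstacle I anticipate is correctly executing the Borelness/closedness of the winning condition in the metric setting. Unlike the discrete case, ``$\bar a\leftrightarrow\bar b$ is approximately a partial isomorphism'' is a metric condition, and it becomes a Borel (rather than analytic) condition on $(A,B,\bar a,\bar b)$ only because of the universal weak modulus $\Omega$ from \cite{yaacov2017metric}, which provides uniform moduli of continuity across formulas of bounded quantifier rank. Without such a uniform modulus the winning condition could be merely analytic, breaking the Gale--Stewart argument. Once this technical setup is in place, the remaining coding issues---positions, moves, and strategies as elements of standard Borel spaces uniformly in $\lambda\in\LO$---are routine.
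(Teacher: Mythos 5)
Your proposal follows essentially the same route as the paper: extend the game clock from ordinals to arbitrary $(\lambda,\prec)\in\LO$, observe that the game is closed and hence determined by Gale--Stewart, obtain analyticity of the winning region for each player by projecting out a Borel set of strategies, conclude Borelness via Suslin's theorem, and get the refining property by restricting strategies along embeddings of linear orders. The one point you leave imprecise is exactly where the approximation parameter sits: the paper defines a family of games $\EFD_{\prec,\varepsilon}$ with winning condition $r_{0,k}(Aa,Bb)<\varepsilon$ at every finite stage, proves each $R_\varepsilon=\{(A,B,\prec)\colon \mathrm{II}\uparrow\EFD_{\prec,\varepsilon}(A,B)\}$ is Borel by your determinacy-plus-Suslin argument, and only then sets $R=\bigcap_{n}R_{1/n}$; crucially the duplicator's strategy is allowed to depend on $\varepsilon$. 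If instead you demand a single strategy ``maintaining the $\Omega$-partial-isomorphism condition'' exactly (i.e.\ $r_{0,k}=0$), the resulting relation is sensitive to the choice of countable dense subset and does not coincide with $r_{\alpha,0}(A,B)=0$, so the theorem as stated about $\equiv_\alpha^{bf}$ would not follow. Making the $\varepsilon$-indexed family explicit and taking the countable intersection at the end is a routine repair within your framework, and with that fix your argument matches the paper's.
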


To conclude Theorem \ref{main1: classifiable}, we are left with showing that Corollary \ref{corollary:intro comparison of Scott degrees} can be applied to the set of classifiable $C^*$-algebras and the invariant $KT_u(\cdot)$.
The descriptive set theory of the Elliott invariant, and of various assumptions of the unital classification theorem have been previously studied in \cite{farah2013descriptive}. 
In particular, using the results of Farah, Toms, and T\"ornquist, we reduce the proof of Theorem \ref{main1: classifiable} to establishing that the set of $C^*$-algebras satisfying the UCT is an analytic set (with respect to a natural Borel structure on the set of unital separable $C^*$-algebras, see Section \ref{subsec: borel parametrization of C*}).

Rosenberg and Schochet established their universal coefficient theorem (UCT) \cite{RosenbergSchochetUCT}, which computes $KK$-theory\footnote{For $C^*$-algebras $A, B$, $KK(A,B)$ is an abelian group that can be defined in terms of Kasparov $(A,B)$-bimodules. We use a different definition outlined in what follows.} in terms of $K$-theory, for a large class of $C^*$-algebras. 
They show that a $C^*$-algebra satisfies the UCT if and only if it is $KK$-equivalent to an abelian $C^*$-algebra (for our purposes this can be taken as the definition of the UCT). 
This condition plays a central and mysterious role within the classification of $C^*$-algebras; it is unknown whether it is redundant and begs the open question of whether every simple separable nuclear $C^*$-algebra satisfies the UCT. 
One can think of $KK$-equivalence as a loose notion of homotopy equivalence between $C^*$-algebras, and we study the descriptive complexity of the UCT class by using the picture of $KK$ as a `universal functor'. More precisely, from the work \cite{Higson-KKtheory} of Higson, it follows two $C^*$-algebras are $KK$-equivalent if and only if their images under any functor that satisfies certain conditions (that we describe in Section \ref{subsec: borel parametrization of C*}, see Definition \ref{definition: KK-equivalence}) become equivalent. This allows us to present $KK$-equivalence of $C^*$-algebras $A, B$ in an existential way: there exist a finite number of conditions on functors $F$, that imply equivalence of images $F(A) \cong F(B)$.

In order to show that $KK$-equivalence is an analytic relation (which implies analyticity of the UCT), we define a first-order language $L_{KK}$ and an $L_{KK}$-theory $T_{KK}$, whose models correspond to functors satisfying Higson's conditions. 
We prove by a thorough analysis of Higson's conditions that $T_{KK}$ is analytic as a subset of the set of $L_{KK}$-sentences with a suitable Borel space structure. 
This is carried out in Section \ref{section:Borel_properties_of_C_star_alg} and uses a particular construction of the Borel category $\Cstar$ of separable $C^*$-algebras. More precisely, in Section \ref{subsection: Borel categories} we describe a Borel category equivalent to the category of separable models of a fixed metric $L_{\omega_1,\omega}$-sentence with $L$-homomorphisms as morphisms, that we use in the case of $C^*$-algebras (see Proposition \ref{proposition: Borel model of the category of sigma}). Let us note that if one considers only surjective isometries between models as morphisms, such Borel groupoids were already considered in \cite{ElliottEtAl2013, MR3660238} (using the Urysohn space) and in \cite{yaacov2017metric}.
As a corollary of analyticity of $T_{KK}$, we get the following, which is a result of interest in its own right. 
\begin{main}\label{main4: KK-equivalence}
    $KK$-equivalence is an analytic equivalence relation on the category of separable unital $C^*$-algebras. Consequently, the class of unital separable $C^*$-algebras satisfying the UCT is analytic.
\end{main}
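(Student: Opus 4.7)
The plan is to leverage Higson's universal characterisation of $KK$-theory: two separable $C^*$-algebras $A$ and $B$ are $KK$-equivalent if and only if $F(A) \cong F(B)$ holds for every functor $F$ from $\Cstar$ into some target category satisfying the Higson conditions (homotopy invariance, stability under tensoring with $\mathcal{K}$, and split exactness). Because $KK$ itself is such a functor, this universally quantified condition is equivalent to its existential counterpart: $A \sim_{KK} B$ iff \emph{there exists} a functor $F$ satisfying Higson's conditions with $F(A)\cong F(B)$. It is this existential reformulation that opens the door to analyticity, since ``there exists'' over a Borel parameter set naturally lands us in $\Sigma^1_1$.

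To make this precise I would proceed in four steps. First, set up the first-order language $L_{KK}$ so that a model is (essentially) a pair consisting of the Borel category $\Cstar$ of separable $C^*$-algebras from Section \ref{subsection: Borel categories} together with a target category and a functor $F$ between them, with enough sorts to speak about morphisms, homotopies and short exact sequences. Second, write down the $L_{KK}$-theory $T_{KK}$ whose axioms encode Higson's conditions directly: functoriality of $F$, equality of $F(f)$ and $F(g)$ for morphisms related by a homotopy $A\to C([0,1],B)$, the isomorphism $F(A)\cong F(A\otimes\mathcal{K})$, and the split-exactness axiom on any short exact sequence $0\to I\to A\to A/I\to 0$ with a splitting. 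Third, prove that $T_{KK}$ is an analytic subset of the standard Borel space of $L_{KK}$-sentences. Fourth, observe that the assertion ``$F(A)\cong F(B)$'' is analytic in the Borel parameters $(F,A,B)$, so that
\[
A\sim_{KK} B \ \iff\ \exists\, (\text{model of } T_{KK})\ \exists\, \text{iso } F(A)\xrightarrow{\sim} F(B)
\]
exhibits $KK$-equivalence as the projection of an analytic set, hence analytic.

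The main obstacle is the third step, the ``thorough analysis of Higson's conditions'' flagged in the introduction. Each axiom quantifies over data which must be shown to live in a Borel fashion over the parametrisation of $\Cstar$: homotopies require a Borel identification of morphisms $A\to C([0,1],B)$; the stability axiom requires that $A\mapsto A\otimes\mathcal{K}$ is a Borel endofunctor; and split-exactness requires access to a Borel parametrisation of short exact sequences with chosen splittings. Because Section \ref{subsection: Borel categories} provides not just a Borel space of objects but a full Borel category with morphisms ``on the shelf,'' these operations should stay within the Borel/analytic hierarchy, but checking this carefully axiom-by-axiom is where the actual labour lies.

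Finally, the UCT consequence is essentially immediate once analyticity of $\sim_{KK}$ is in hand. The set of (unital separable) abelian $C^*$-algebras is Borel in the parametrisation of Section \ref{subsec: borel parametrization of C*}, since commutativity is a closed first-order condition. By Rosenberg--Schochet, a separable $C^*$-algebra satisfies the UCT iff it is $KK$-equivalent to some abelian $C^*$-algebra, so
\[
\UCT\ =\ \{A : \exists\, B\text{ abelian with } A\sim_{KK} B\}
\]
is the projection of the intersection of an analytic relation with a Borel set, hence analytic.
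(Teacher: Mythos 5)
Your steps (1)--(3) and the final UCT reduction follow the paper's route closely: the paper likewise builds $L_{KK}$ over the Borel category $\Cstar$, axiomatizes Higson's conditions as the theory $T_{KK}$ of Definition \ref{definition: T_KK}, and verifies analyticity of $T_{KK}$ axiom-by-axiom using exactly the Borel-category lemmas you anticipate (homotopy via $C([0,1],\widehat{B})$, Borelness of the stabilization endofunctor, Borelness of split exact sequences). The UCT consequence via Rosenberg--Schochet and projection over the Borel set of abelian algebras is also as in the paper.

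However, your step (4) contains a genuine error. The claimed equivalence ``$A\sim_{KK}B$ iff \emph{there exists} a $\kappa\kappa$-functor $F$ with $F(A)\cong F(B)$'' is false: the zero functor into the trivial additive category is homotopy invariant, stable and split exact, so the existential statement holds for \emph{every} pair $(A,B)$. Universality of $KK$ (Theorem \ref{thm: KK}) gives the implication in the wrong direction for your purposes --- an isomorphism $KK(A)\cong KK(B)$ pushes forward through $\widehat F$ to \emph{all} $\kappa\kappa$-functors, so the correct characterization is the universal one: $A\sim_{KK}B$ iff $F(A)\cong F(B)$ for \emph{every} $\kappa\kappa$-functor $F$. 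A universal quantifier over models does not directly yield analyticity, and there is a second obstruction even if it did: the models of $T_{KK}$ do not form a standard Borel space (the target additive category is arbitrary, and $L_{KK}$ has continuum-many sorts and constants, so one cannot naively cut down to a Borel parameter space of countable models). The paper's resolution, which is the actual content of Theorem \ref{thm:T_equivalence_is_analytic}, is logical rather than model-theoretic: ``$M\models\varphi_{A,B}$ for every model $M$ of $T_{KK}$'' is the same as $T_{KK}\models\varphi_{A,B}$, which by G\"odel's completeness theorem is equivalent to the existence of a \emph{finite formal proof} of $\varphi_{A,B}$ from $T_{KK}$. Finite proofs involve only finitely many symbols, so they live in a standard Borel space, and the set of proofs from an analytic theory is analytic (Lemma \ref{lemma:set_of_proofs_is_Borel}); the existential quantifier supplying analyticity ranges over proofs, not over functors. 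Without this step your argument does not go through.
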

Returning to the classification of $C^*$-algebras, the proof of Theorem $\ref{main1: classifiable}$ goes as follows:
\begin{enumerate}
    \item We prove Theorem \ref{main4: KK-equivalence}, showing that the class of unital separable $C^*$-algebras satisfying the UCT is analytic. 
    This is the content of Section \ref{section:Borel_properties_of_C_star_alg} which rests on the formalism of Borel categories of $L$-structures in Sections \ref{subsection: Borel categories} and \ref{subsection: analytic conditions}.
    \item It follows from previous works on the descriptive set theory of $C^*$-algebra invariants (together with Theorem \ref{main4: KK-equivalence}) that the set of classifiable $C^*$-algebras is analytic, and that the map $KT_u(\cdot)$ taking a $C^*$-algebra to its invariant is Borel.
    \item We then apply Corollary \ref{corollary:intro comparison of Scott degrees}, with $X$ being the set of classifiable $C^*$-algebras and the analytic function $K$ being $KT_u(\cdot)$, in order to obtain Theorem \ref{main1: classifiable}. The proof of Corollary \ref{corollary:intro comparison of Scott degrees} uses metric dynamic Ehrenfeucht-Fra\"iss\'e games and Theorem \ref{main2: DST transfer}.
\end{enumerate}

\subsection*{Acknowledgements}
We are indebted to Udi Hrushovski and Stuart White for copious amounts of guidance. 
We thank the participants of the reading group on \cite{games_on_AF_algebras} in Oxford, Michaelmas 2024, in particular Jakub Curda, Shanshan Hua and Austin Shiner. Moreover, we thank Filippo Calderoni, Leo Gitin, and Andrea Vaccaro for some conversations.

\noindent
\textbf{Funding.} JP was supported by the Engineering and Physical Sciences Research Council (EP/X026647/1).
MS was supported by the National Science Foundation under Grant No. DMS-2424441, and by the IAS School of Mathematics. 

For the purpose of Open Access, the authors have applied a CC BY public copyright license to any Author Accepted Manuscript (AAM) version arising from this submission.

\section{Descriptive set theory and infinitary metric logic}\label{subsection: Metric Infinitary Logic}

The model theory of metric structures, dating back to work of Chang and Keisler \cite{chang_keisler_continuous}, was developed extensively in the past two decades, and large parts of (discrete) model theory have been generalized to the metric context \cite{MR2436146, Ben_Yaacov_2010}. Metric model theory has since received growing attention, particularly within the study of operator algebras \cite{farah2021MTofC*, Goldbring2023book, goldbring2022existentially, ioana2024existential, robert2025selfless}. The study of infinitary logics
emerged in the 1960s, with particular focus on the logic $L_{\omega_1, \omega}$, where one allows disjunction and conjunction over countable sets of formulas.\footnote{The subscript $\omega_1$ indicates that only countable disjunctions and conjunctions appear in $L_{\omega_1, \omega}$-formulas, while the subscript $\omega$ indicates that only finite-length blocks of quantifiers appear.} 

We aim to give in Section \ref{sec: infinitary metric logic} a brief introduction to the lesser studied area of infinitary metric logic as treated in \cite{yaacov2017metric}, as well as its connection to descriptive set theory through considering spaces of separable models. We define the notions of a metric language $L$, the corresponding infinitary logic $L_{\omega_1, \omega}$ and the quantifier rank $\qr(\varphi)$ of an $L_{\omega_1, \omega}$-formula, and describe the standard Borel space $\Mod_\omega(L)$ of separable $L$-structures.
We will assume familiarity with some basic concepts in discrete first-order logic. 

In section \ref{subsection:metric_games} we give a presentation of the \textit{back-and-forth pseudo-distance} of Ben Yaacov, Doucha, Nies, and Tsankov \cite{yaacov2017metric}, in terms of a game which we refer to here as the \textit{dynamic metric Ehrenfeucht-Fra\"iss\'e game} $\EFD_{\alpha}$. We show that the set of winning strategies for such games is Borel, and derive in particular that the family of equivalence relations $(\equiv_\alpha^{bf})_{\alpha<\omega_1}$ between separable structures is Borel in a uniform sense, see Theorem \ref{theorem:back-and-forth_is_uniformly_borel}.

We apply in Section \ref{subsection: barwise} a classical fact from descriptive set theory -- namely non-analyticity of the class of countable well orderings -- to derive Theorem \ref{main2: DST transfer}. 

We introduce in Section \ref{subsection: Borel categories} the notion of a Borel category and show that spaces of separable models of countable metric logic theories form Borel categories. The motivation for this is two-fold. Abstractly, we are proposing Borel categories as a setting for studying spaces of models that facilitates both the categorical viewpoint and the descriptive set theoretic viewpoint. Concretely, we make use of this setting in our proof of analyticity of the UCT in Section \ref{section: analycity of KK}, where the encoding of the space of separable C*-algebras as a Borel category plays a central role.

\subsection{Descriptive set theory preliminaries}\label{subsection: DST preliminaries}
We give here a brief summary of some standard definitions and facts from descriptive set theory that will be relevant throughout. For standard references on the subject, see \cite{MR1321597, MR561709}.

Recall that a topological space is called separable if it has a countable dense subset, and it is called Polish if it is separable and completely metrizable. A measurable space is a set together with a $\sigma$-subalgebra of its subsets. For any topological space $X$, denote by $\mathbf{B}({X})$ the smallest collection of subsets of $X$ which contains all open sets and is closed under countable unions and complements. $\mathbf{B}({X})$ forms a $\sigma$-algebra on $X$, called the \textit{Borel algebra} of $X$. Elements of $\mathbf{B}({X})$ are called \textit{Borel subsets} of ${X}$. The measurable space $(X,\mathbf{B}({X}))$ is called the \textit{Borel space} of $X$. We call a measurable space a \textit{standard Borel space} if it is isomorphic to the Borel space associated with a Polish topological space. Note that the union and product of a countable family of Polish spaces is a Polish space. Similarly, the union and product of a countable family of standard Borel spaces is a standard Borel space. A map between standard Borel spaces is called Borel if preimages of Borel sets are Borel.

Kuratowski's theorem says that the only standard Borel spaces up to isomorphism are $\mathbb{R}$, $\mathbb{Z}$, and finite discrete sets. Hence, any two uncountable standard Borel spaces are isomorphic.

A subset of a standard Borel space is called \textit{analytic} if it is the image of a Borel subset by a Borel map.  
Equivalently, analytic subsets can be defined as projections of closed (or Borel) subsets of a product with another standard Borel space. A fact that we will use throughout is that analytic sets are closed under countable unions and intersections, and images and preimages of analytic sets under Borel functions are analytic. For a standard Borel space $X$ and $X_0\subseteq X$, we say that a set $A\subseteq X_0$ is analytic (resp. Borel) relative to $X_0$ if it is the intersection of an analytic (resp. Borel) subset of $X$ with $A$. Since analytic sets are closed under finite intersections, if $X_0$ is analytic then a set is analytic relative to $X_0$ if and only if it is analytic in $X$. 

We call a subset of a standard Borel space coanalytic if its complement is analytic. 
\begin{fact}[Suslin's theorem] \label{fact: suslin's thm}
    A subset of a standard Borel space is Borel if and only if it is analytic and co-analytic.
\end{fact}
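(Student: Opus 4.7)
The plan is to handle the two directions of Fact~\ref{fact: suslin's thm} separately. The forward direction is immediate: a Borel set $A$ is trivially the image of itself under the identity (a Borel map), hence analytic, and the same applies to its complement. The heart of the argument is the reverse direction, and my plan is to reduce it to the classical \emph{Lusin separation theorem}: any two disjoint analytic subsets of a standard Borel space $X$ can be separated by a Borel set. Granting separation, if $A$ and $X\setminus A$ are both analytic, then applying it to the disjoint pair $(A, X\setminus A)$ yields a Borel set $C$ with $A\subseteq C$ and $C\cap(X\setminus A)=\emptyset$, forcing $A = C$, which is therefore Borel.

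To prove Lusin's separation theorem I would proceed in two steps. First, I would establish that Borel-separability is countably additive: if $A=\bigcup_n A_n$ and $B=\bigcup_m B_m$, and each pair $(A_n, B_m)$ can be separated by a Borel set $C_{n,m}$ (meaning $A_n\subseteq C_{n,m}$ and $C_{n,m}\cap B_m=\emptyset$), then $\bigcup_n\bigcap_m C_{n,m}$ is a Borel set separating $A$ from $B$. Second, I would fix a Polish topology on $X$ compatible with its standard Borel structure, and represent any disjoint analytic $A$ and $B$ as continuous images $A=f(\omega^\omega)$ and $B=g(\omega^\omega)$ from the Baire space. Then I would argue by contradiction: assuming $A$ and $B$ are not Borel-separable, decompose $\omega^\omega$ into the clopen cylinders $N_n=\{s:s(0)=n\}$, write $A=\bigcup_n f(N_n)$ and $B=\bigcup_m g(N_m)$, and invoke the countable additivity step to extract indices $n_0, m_0$ for which $f(N_{n_0})$ and $g(N_{m_0})$ are themselves not Borel-separable. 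Iterating down the tree $\omega^{<\omega}$ produces infinite sequences $s, t\in\omega^\omega$ such that $f(N_{s|_k})$ and $g(N_{t|_k})$ fail to be Borel-separable for every $k$.

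The main obstacle, and the only genuinely delicate step, is the closing contradiction: I must argue that the images $f(s)$ and $g(t)$ of the limiting branches necessarily coincide, violating $A\cap B=\emptyset$. The point is that in a Polish space any two distinct points admit disjoint open (hence Borel) neighborhoods; combined with continuity of $f$ and $g$, such neighborhoods would pull back to a Borel separation of $f(N_{s|_k})$ from $g(N_{t|_k})$ for some large $k$, contradicting the construction. Hence $f(s)=g(t)$, contradicting disjointness of $A$ and $B$. Once this step is in place, the rest is routine bookkeeping: countable additivity of separability, continuity of $f,g$, and the König-style descent through $\omega^{<\omega}$ combine to give the separation theorem, and Suslin's theorem follows as described above.
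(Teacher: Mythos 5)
Your argument is correct and is the standard textbook proof of Suslin's theorem via the Lusin separation theorem (cf.\ Kechris, \emph{Classical Descriptive Set Theory}, Theorems 14.7 and 14.11). The paper itself offers no proof: it records the statement as a classical \emph{Fact} and points to the standard references, so there is nothing to compare against beyond noting that your route is the canonical one. Two small points are worth tightening. First, the reduction to continuous images $A=f(\omega^\omega)$, $B=g(\omega^\omega)$ silently uses the equivalence between the paper's definition of analytic (Borel image of a Borel set, or projection of a closed subset of a product) and the characterization as continuous images of Baire space; this is standard but is itself a lemma, and it also forces you to dispose of the trivial case where $A$ or $B$ is empty, since the empty set is analytic but is not a continuous image of the nonempty space $\omega^\omega$. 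Second, in the closing contradiction you should make explicit that the cylinders $N_{s|_k}$ form a neighborhood base at $s$ (their diameters shrink to $0$ in the usual metric on $\omega^\omega$), which is exactly what lets continuity of $f$ and $g$ push $f(N_{s|_k})$ and $g(N_{t|_k})$ into the disjoint open sets $U$ and $V$ for large $k$. With those remarks in place the proof is complete.
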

As a consequence of Suslin's theorem, a map between standard Borel spaces whose graph is analytic (as a subset of the product) is a Borel function (with a posteriori Borel graph).  

Further in the text, when we work with sets of the form $2^\omega$, $\omega^\omega$, or $\mathbb{R}^{\omega}$, we equip them with their usual Polish topologies and corresponding standard Borel structures. We equip finite and countable powers of these spaces with the respective product topology, which is again a Polish topology.

\subsection{Infinitary metric logic and the space of separable structures}\label{sec: infinitary metric logic}

Given a countable, discrete relational language $L=\{R_i ~\colon i\in I\}$, each countably infinite $L$-structure is isomorphic to an $L$-structure $A=(\omega;(R_i^A)_{i\in I})$ on $\omega$. This structure $A$ can be viewed as an element of the set $\Mod_\omega(L):=\prod_{i\in I}2^{\omega^{n_i}}$ where $n_i$ denotes the arity of $R_i$. As a countable product of finite powers of the Cantor space, $\Mod_\omega(L)$ carries a natural Polish topology, which coincides with the topology generated by open sets of the form $\Mod_\omega(\varphi(a)):=\{A \colon A\models \varphi(a)\}$ where $\varphi$ is a quantifier-free formula. This leads to a fruitful interplay between (infinitary) first-order logic and descriptive set theory that has been studied extensively.  
Sets of the form $\Mod_\omega(\sigma)$ where $\sigma$ is an $L_{\omega_1, \omega}$-formula are Borel in $\Mod_\omega(L)$, as can be seen by induction. A theorem of Lopez-Escobar \cite{Lopez1965} establishes that conversely, if $X\subseteq \Mod_\omega(L)$ is Borel and invariant under the action of $S_\infty$, then $X=\Mod_{\omega}(\sigma)$ for some $L_{\omega_1, \omega}$-sentence $\sigma$ (see also \cite[Chapter II.16]{MR1321597}). Scott's isomorphism theorem for countable structures produces for each $A\in\Mod_\omega(L)$ an $L_{\omega_1, \omega}$-sentence $\sigma_A$ called the \textit{Scott sentence} of $A$ such that the isomorphism class of $A$ in $\Mod_\omega(L)$ coincides with $\Mod(\sigma_A)$.

In view of our main application in Section \ref{section: Application}, we are interested in the more general case of a continuous logic language $L$, and the study of spaces of separable $L$-structures. Roughly speaking, continuous logic extends discrete first-order logic by allowing relations -- and therefore all formulas -- to take real values and allowing as connectives arbitrary continuous functions. For a detailed treatment of continuous logic, we refer the reader to \cite{MR2436146, MR4654490}.

A key technical subtlety that arises in moving to such a logic is that in order to obtain a compactness theorem, one needs to ensure that each formula takes values only in some bounded interval, uniformly in each structure. 
We use here the framework of metric logic, as developed in \cite{MR2436146}, where every language $L$ contains a distinguished binary predicate symbol $d$ to be interpreted as a metric, and an $L$-structure is a complete metric space, with each relation symbol $R$ taking values in a compact interval $I_R\subseteq\mathbb R$, determined by $L$.
In particular, the language $L$ requires each structure to have diameter bounded by some fixed $D_L>0$. It is natural to insist functions and relations be continuous on a given $L$-structure with respect to the metric. 
To ensure that continuity passes through ultraproducts, one demands each function and relation respect some \textit{modulus of uniform continuity}. Given $\Delta \colon [0,\infty)^n\to[0,\infty)$ and a metric space $X$, we define for $x,x'\in X^n$,
\[d_\Delta(x,x'):=\Delta(d_X(x_1,x'_1),\dots,d_X(x_n,x_n')).\]
If $\Delta$ is continuous, non-decreasing and subadditive and $\Delta(0)=0$, then $d_\Delta$ defines a pseudo-metric.
In this case we say that $\Delta$ is a \textit{modulus of continuity} and a map $f \colon X^n\to Y$ \textit{respects} $\Delta$ if $d_Y(f(x),f(x'))\leq d_\Delta(x,x')$ for any $x,x'\in X^n$.

\begin{definition}\label{definition: metric language}
    A metric language $L$ consists of 
    \begin{enumerate}
        \item[(i)] A set of sorts $S$;
        \item[(ii)] For each $s\in S$ a binary relation symbol $d_s$ and a diameter $D_s>0$;
        \item[(iii)] Relation symbols $R$, together with an arity $\overline{s}\in S^n$, a compact interval $I_R\subseteq\mathbb R$ and a modulus of continuity $\Delta_R\colon [0,\infty)^n\to[0,\infty)$;
        \item[(iv)] Function symbols $f$, together with an arity $\overline{s}\in S^{n+1}$ and a modulus of continuity $\Delta_f \colon [0,\infty)^n\to[0,\infty)$;
        \item[(v)] Constant symbols $c$, together with a sort $s\in S$. 
    \end{enumerate}
\end{definition}

Let $L$ be a metric language and assume for notational simplicity that $L$ is one-sorted, that is $|S|=1$.
An $L$-structure ${A}$ consists of a complete metric space $(A,d_A)$ with diameter $\leq D_L$, together with
\begin{itemize}
    \item for each $n$-ary relation symbol $R\in L$ a function $R^A\colon A^{n}\to I_R$ that respects $\Delta_R$;
    \item for each $n$-ary function symbol $f\in L$, a map $f^A\colon A^{n}\to A$ that respects $\Delta_f$;
    \item for each constant symbol $c\in L$, an element $c^A\in A$.
\end{itemize}

${L}$-terms are defined inductively from constant and function symbols as in discrete logic, and each $L$-term $\tau$ carries a modulus of continuity $\Delta_\tau$, obtained inductively from those on function symbols. Atomic formulas are of the form $R(\tau_1,\dots,\tau_{n})$ where $\tau_i$ are terms and $R$ is a relation symbol. ${L}$-formulas are built inductively from atomic formulas, with
arbitrary uniformly continuous maps $u\colon \mathbb R^n\to\mathbb R$ as connectives, and quantifiers $\inf_x$ and $\sup_x$ (in place of $\exists x$ and $\forall x$). 

We move now to infinitary metric logic following \cite{Ben_Yaacov_2009}.
A technical hurdle of considering infinitary formulas in metric languages is that when taking infima and suprema (in place of the discrete infinitary connectives $\bigvee$ and $\bigwedge$) over countable families of formulas, the resulting formula may fail to be continuous or have unbounded range. This motivates the restriction on infinitary conjunctions and disjunctions in the fourth point of Definition \ref{def: Metric Infinitary Formulas} below. 

Fix a metric language $L$. Note that the intervals $I_R$ and moduli of continuity $\Delta_R$ respectively $\Delta_\tau$ on relation symbols respectively terms in $L$ can be extended inductively in a canonical way to compact intervals $I_\varphi$ and moduli of uniform continuity $\Delta_\varphi$, when $\varphi$ is an infinitary formula as in the below definition. We write $\varphi\in L_{\omega_1, \omega}$ when $\varphi$ is an $L_{\omega_1,\omega}$-formula. Recall that $\omega_1$ denotes the smallest uncountable ordinal.

\begin{definition}\label{def: Metric Infinitary Formulas}
    The set of $L_{\omega_1, \omega}$-formulas $\varphi$, as well as their \textit{quantifier rank} $\qr(\varphi)$ is defined as follows.
    \begin{enumerate}
        \item[(i)] If $\varphi$ is a quantifier-free $L$-formula then $\varphi\in L_{\omega_1, \omega}$ with $\qr(\varphi)=0$;
        \item[(ii)] If $\varphi_1,\dots,\varphi_n\in L_{\omega_1, \omega}$ and $u \colon \mathbb R^n\to\mathbb R$ is uniformly continuous then $u(\varphi_1,\dots,\varphi_n)\in L_{\omega_1, \omega}$ with 
        \[\qr\big(u(\varphi_1,\dots,\varphi_n)\big)=\max\big\{\qr(\varphi_1),\dots,\qr(\varphi_n)\big\};\]
        \item[(iii)] If $\varphi\in L_{\omega_1, \omega}$ and $x$ is a variable then $\sup_x\varphi\in L_{\omega_1, \omega}$ and $\inf_x\varphi\in L_{\omega_1, \omega}$ with 
        \[\qr({\sup}_x\varphi)=\qr({\inf}_x\varphi)=\qr(\varphi)+1;\]
        \item[(iv)] Given variables $x_1,\dots,x_n$, a bounded interval $I\subseteq\mathbb R$ and a modulus of continuity $\Delta \colon [0,\infty)^n\to[0,\infty)$, if $(\varphi_i)_{i<\omega}$ is such that for each $i<\omega$, $\varphi_i$ is an $L_{\omega_1, \omega}$-formula with free variables $x_1,\dots,x_n$, taking values in $I$ and respecting $\Delta$ (i.e., $\Delta_{\varphi_i} \leq \Delta$), then $\bigwedge_i\varphi_i\in L_{\omega_1,\omega}$ and $\bigvee_i\varphi_i\in L_{\omega_1, \omega}$ with 
        \[\qr\Big(\bigvee_i\varphi_i\Big)=\qr\Big(\bigwedge_i\varphi_i\Big)=\sup_i\qr(\varphi).\]
    \end{enumerate}
\end{definition}

Note that by definition, $\qr(\varphi)$ is a countable ordinal (written $\qr(\varphi)<\omega_1$) for any $\varphi\in L_{\omega_1, \omega}$. 
The first-order $L$-formulas are those defined only by $(i)-(iii)$ in the above definition. A sentence is a formula without free variables. A (first-order) theory $T$ is a set $T=\{ \varphi_i \colon i \in I \}$ for some (first-order) sentences $\{\varphi_i\}_{i \in I}$. For an $L_{\omega_1, \omega}$-sentence $\sigma$ and an $L$-structure $A$, we write $\sigma^A$ for the value of $\sigma$ interpreted in $A$. We say that $A$ satisfies $\sigma$ (written $A \models \sigma$), if $\sigma^A = 0$. Similarly, for a theory $T$, we write $A \models T$ if $A$ satisfies all sentences from $T$. In that case we call $A$ a model of $T$. For $L$-structures $A$ and $B$, we write $A\cong B$ and say that $A$ and $B$ are \textit{isomorphic} if there exists a bijection $f:A\to B$ with $\varphi^A(a)=\varphi^B(f(a))$ for each $L$-formula $\varphi(x)$ and tuple $a$ in $A$ of appropriate arity. The following gives a family of weaker equivalence relations between $L$-structures.

\begin{definition}\label{definition:alpha_elementary_equivalence}
    For $L$-structures $A,B$ and $\alpha<\omega_1$ we write $A\equiv_\alpha B$ if $\sigma^A=\sigma^B$ for every $L_{\omega_1, \omega}$-sentence $\sigma$ with $\qr(\sigma)\leq\alpha$.
\end{definition} 

We define the language and theory of $C^*$-algebras and of order unit spaces below for use in future applications in Section \ref{section: Application}.

\begin{example}\label{example:language of C*}
    Consider the multi-sorted countable language $L_{C^*}$ for $C^*$-algebras, containing for each $n\in\mathbb{N}$,
    \begin{itemize}
        \item a sort $B_n$ to be interpreted as the ball of radius $n$ and a predicate $d_n \colon B_n\to[0,n]$ to be interpreted as the metric,
        \item function symbols 
        \begin{itemize}
            \item $+_n \colon B_n\times B_n\to B_{2n}$ to be interpreted as addition, 
            \item ${^*}_n \colon B_n\to B_n$ to be interpreted as involution,
            \item $\cdot_n \colon B_n\times B_n\to B_{n^2}$ to be interpreted as multiplication,
            \item $(\cdot\lambda)_n \colon B_n\to B_m$ for $\lambda\in\mathbb Q(i)$ and $m={\lceil |\lambda| n \rceil}$, to be interpreted as scalar multiplication,
            \item $I_{mn} \colon B_m\to B_n$ for $m<n$, to be interpreted as containment,
        \end{itemize}
        \item a constant symbol $0$ of sort $B_1$, to be interpreted as zero.
    \end{itemize}  
    Furthermore, we write $T_{C^*}$ for the theory of $C^*$-algebras as presented in Example~2.2.1 from \cite{farah2021MTofC*}, except we only use $\mathbb{Q}(i)$ scalars to ensure $T_{C^*}$ is countable (see loc.cit. for a natural choice of moduli of continuity and some discussion). We denote by $T_{C^*}'$ the theory of unital $C^*$-algebras in the language $L_{C^*}$ expanded by a constant symbol $1 \in B_1$.
\end{example}

\begin{example} \label{example:language Lous}
    Consider the multi-sorted language $L_{ous}$ (``$ous$'' stands for order unit spaces) containing for each $n\in\mathbb N$,
    \begin{itemize}
        \item a sort $B_n$ with metric predicates $d_n \colon B_n\to[0,n]$,
        \item function symbols 
        \begin{itemize}
            \item $+_n \colon B_n\times B_n\to B_{2n}$, 
            \item $(\cdot\lambda)_n \colon B_n\to B_m$ for $m={\lceil |\lambda| n \rceil}$ and $\lambda\in\mathbb Q$,
            \item $I_{mn} \colon B_m\to B_n$ for $m<n$,
        \end{itemize}
        \item relation symbols $P_n \colon B_n \to [0,n]$,
        \item and constant symbols $0\in B_1$ and $1\in B_1$.
        The moduli of continuity of addition and multiplication symbols are $\Delta_{+_n}(\delta,\delta')=\delta+\delta'$, $\Delta_{\cdot\lambda}(\delta)=|\lambda|\delta$. The moduli of continuity of $I_{mn}$'s and $P_n$'s are identity.
    \end{itemize}
    An order unit space $A$ is an Archimedean real ordered vector space with a distinguished element $\mathbf{1}$ called its order unit, such that for every $a \in A$, $\{k a \colon k \in \mathbb{N} \}$ is unbounded, and there is a natural number $n$ such that $a \leq n \mathbf{1}$. In that case one defines a norm on $A$ by
    \[ \|a \| := \inf \{r \colon -r\mathbf{1} \leq a \leq r \mathbf{1}  \}. \]
    Note that each complete order unit space can be seen as an $L_{ous}$-structure, by interpreting $B_n$ as the ball of radius $n$ in $A$, $d_n$ as the metric $d_n(x,y) = \|x-y\|$, and $P_n$ as the distance to the positive cone.
\end{example}

We will be interested throughout this paper in separable metric structures. Let ${L}=(d;(R_i)_i;(f_j)_j;c_k)$ be a one-sorted countable metric language. Since $L$ is countable, any separable $L$-structure $A$ contains a countable dense subset $A_0\subseteq A$, containing all constants $c_k^A$ and closed under all functions $f_j^A$. Such $A_0$ is sometimes called a \emph{pre-substructure}, and a \emph{substructure} if additionally, $A_0$ is closed as a metric space. When $A_0$ is a dense pre-substructure, $A$ is uniquely determined by $A_0$.

After fixing a countable dense pre-substructure $A_0=\{a_i ~\colon i<\omega\} \subseteq A$, we can encode $A$ as an element of a suitable Polish space. Equip 
\begin{equation} \label{Borel L-structure}
    X_L:=\mathbb R^{\omega^2}\times\prod_i\mathbb R^{\omega^{n_i}}\times\prod_j\omega^{\omega^{m_j}}\times\prod_k\omega 
\end{equation}
with the standard Borel structure induced by the product topology of the respective Polish topologies on $\mathbb R^{\omega^{n_i}}$, $\omega^{\omega^{m_j}}$ and $\omega$. Then we can assign to $A$ an element $A\in X_L$ (by abuse of notation) in the canonical way, setting
\begin{align*}
    A(i):=\begin{cases}
    \omega^{n_i}\to\mathbb R\\
    (j_1,\dots,j_{n_i})\mapsto R_i^A(a_{j_1},\dots,a_{j_{n_i}})
\end{cases}
\end{align*}
and encoding function symbols, constant symbols and the distinguished predicate $d$ in an analogous way.

We define $\Mod_\omega(L)\subseteq X_L$ to be the range of this assignment. We assume in the definition below that $L$ is a countable one-sorted metric language as in Definition \ref{definition: metric language}. Note that the construction can be generalized straightforwardly to a language with countably many sorts. 

\begin{definition}\label{definition: Mod_omega(L)}
    $\Mod_\omega(L)$ is the set of those $(d,(R_i)_{i},(f_j),(c_k))\in \mathbb R^{\omega^2}\times \prod_{i}\mathbb R^{\omega^{n_i}}\times \prod_j\omega^{\omega^{m_j}}\times\prod_k\omega=:X_L$ for which
\begin{itemize}
    \item $d \colon \omega^2\to\mathbb R$ is a metric of diameter $<D$;
    \item $R_i \colon \omega^{n_i}\to\mathbb R$ takes values only in $I_{R_i}$ and respects $\Delta_{R_i}$.
    \item $f_j \colon \omega^{m_j}\to\omega$ respects $\Delta_{f_j}$.
\end{itemize}
\end{definition}

For $A\in\Mod_\omega(L)$, denote the unique separable $L$-structure encoded by $A$ as $\widehat{A}$, that is $\widehat{A}$ is the completion of $A$ with respect to $d$, with functions and relations extended continuously.

The following lemma follows from checking that the properties listed in Definition \ref{definition: Mod_omega(L)} are Borel, and an induction argument.

\begin{lemma}\label{lem: set of separable models is Borel}
    For an $L_{\omega_1, \omega}$-sentence $\sigma$, the set \[ \Mod_\omega(\sigma):=\{A\in\Mod_\omega(L) ~\colon \widehat{A}\models \sigma\} \] is Borel in $X_L$. In particular, $\Mod_\omega(L)$ is Borel.
\end{lemma}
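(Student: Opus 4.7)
The plan is to prove the two assertions in sequence: first that $\Mod_\omega(L)$ is Borel in $X_L$, and second that $\Mod_\omega(\sigma)$ is Borel for every $L_{\omega_1,\omega}$-sentence $\sigma$, by induction on the construction of $\sigma$.

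For Borelness of $\Mod_\omega(L)$ itself, I would simply unpack each of the defining conditions of Definition \ref{definition: Mod_omega(L)}. That $d\colon\omega^2\to\mathbb R$ is a metric of diameter $<D$ amounts to the intersection over all $i,j,k<\omega$ of the closed conditions $d(i,i)=0$, $d(i,j)=d(j,i)$, $d(i,j)\leq d(i,k)+d(k,j)$ and the open condition $d(i,j)<D$; each is visibly Borel in $\mathbb R^{\omega^2}$. The requirement that $R_i^A$ take values in the compact interval $I_{R_i}$ is a countable intersection of closed conditions on coordinates of $\mathbb R^{\omega^{n_i}}$, and the modulus condition is the intersection over pairs $\bar j,\bar j'\in\omega^{n_i}$ of the closed conditions
\[|R_i^A(\bar j)-R_i^A(\bar j')|\leq\Delta_{R_i}\bigl(d(j_1,j'_1),\dots,d(j_{n_i},j'_{n_i})\bigr),\]
and analogously for each $f_j^A$. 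Thus $\Mod_\omega(L)$ is the intersection of countably many Borel subsets of $X_L$.

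For the second assertion I would prove by induction on the construction of an $L_{\omega_1,\omega}$-formula $\varphi(x_1,\dots,x_n)$ that, for every $(i_1,\dots,i_n)\in\omega^n$, the evaluation map
\[ \Mod_\omega(L)\to\mathbb R,\qquad A\mapsto \varphi^{\widehat A}(a_{i_1},\dots,a_{i_n}),\]
is Borel; taking $\varphi=\sigma$ a sentence yields that $\Mod_\omega(\sigma)=\{A:\sigma^{\widehat A}=0\}$ is Borel. The base case treats atomic formulas $R(\tau_1,\dots,\tau_n)$ and $d(\tau_1,\tau_2)$: since the pre-substructure is closed under the function symbols, a term $\tau$ evaluated at indices $(i_1,\dots,i_n)$ is obtained by finitely many iterated applications of the coordinates $f_j^A\colon\omega^{m_j}\to\omega$ of $A$ to indices and constant coordinates, hence is a Borel function of $A$ with values in $\omega$; composing with the Borel coordinate $R_i^A$ or $d^A$ of $A$ preserves Borelness. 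Continuous connectives $u(\varphi_1,\dots,\varphi_k)$ are immediate since $u$ is continuous and compositions of Borel with continuous maps are Borel. For countable conjunctions and disjunctions, the inductive hypothesis gives Borelness of each $A\mapsto \varphi_i^{\widehat A}(\bar a)$; since countable suprema and infima of real-valued Borel functions are Borel, so are $\bigvee_i\varphi_i$ and $\bigwedge_i\varphi_i$.

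The one point that requires genuine input is the quantifier case, and this is where I expect the main (though modest) technical care is needed. For $\sup_x\varphi(x)$ I would argue that because $\{a_i:i<\omega\}$ is dense in $\widehat A$ and $\varphi^{\widehat A}$ respects the modulus of uniform continuity $\Delta_\varphi$ built inductively from $L$, one has
\[(\sup_x\varphi)^{\widehat A}(a_{i_1},\dots,a_{i_n}) \;=\; \sup_{i<\omega}\varphi^{\widehat A}(a_i,a_{i_1},\dots,a_{i_n}),\]
and symmetrically for $\inf_x$. The inductive hypothesis makes each function on the right Borel in $A$, so the countable supremum on the left is Borel. This density/uniform-continuity reduction is the only place where the specific encoding of structures via dense pre-substructures indexed by $\omega$ (rather than the full completion) is used; once it is in place, the induction closes and the lemma follows, with $\Mod_\omega(L)$ being the special case of $\sigma$ tautologically true.
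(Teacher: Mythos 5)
Your proposal is correct and follows exactly the route the paper indicates (the paper gives no detailed proof, only the remark that the lemma "follows from checking that the properties listed in Definition \ref{definition: Mod_omega(L)} are Borel, and an induction argument"): you verify Borelness of the defining conditions of $\Mod_\omega(L)$ via countable intersections of closed/open conditions, and then induct on the formula, with the quantifier case handled by replacing the supremum over $\widehat A$ by the supremum over the dense pre-substructure using uniform continuity of $\varphi^{\widehat A}$. This is the intended argument, correctly identifying the density/uniform-continuity reduction as the only step needing genuine care.
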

By the metric generalization of Scott's isomorphism theorem proved in \cite[Theorem 1.2]{yaacov2017metric}, the isomorphism class in $\Mod_\omega(L)$ of any separable $L$-structure $A$ is of the form $\Mod_\omega(\sigma_A)$ for some $\sigma_A\in L_{\omega_1, \omega}$. As a corollary, one obtains the following.

\begin{fact}[{\cite[Theorem 1.2]{yaacov2017metric}}] \label{theorem:metric_scott_iso}
    For a countable metric language $L$ and separable $L$-structures $A$ and $B$,
    \[A\cong B\iff \forall\alpha<\omega_1 ~\colon A\equiv_\alpha B.\]
\end{fact}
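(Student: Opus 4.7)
The plan is to prove the two implications separately. The forward direction, that $A\cong B$ implies $A\equiv_\alpha B$ for all $\alpha<\omega_1$, is a routine induction on the structure of $L_{\omega_1,\omega}$-formulas: any isomorphism preserves atomic formulas, commutes with uniformly continuous connectives, and commutes with $\inf_x$ and $\sup_x$ taken over the whole structure as well as with the countable connectives $\bigvee,\bigwedge$ when they are defined.

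For the backward direction I would adapt the classical construction of Scott sentences. For each countable ordinal $\alpha$ and each finite tuple $\bar a$ in $A$ I would define by transfinite recursion an $L_{\omega_1,\omega}$-formula $\varphi^\alpha_{\bar a}(\bar x)$ of quantifier rank $\alpha$ capturing, quantitatively, the $\alpha$-back-and-forth type of $\bar a$ in $A$. At rank $0$, $\varphi^0_{\bar a}$ aggregates the values of all atomic $L$-formulas at $\bar a$ via a countable $\sup$ or countable conjunction. Successor stages combine $\sup_y\inf_{\bar a'}\varphi^\alpha_{\bar a'\bar a}(\bar x,y)$ with $\inf_y\sup_{\bar a'}\varphi^\alpha_{\bar a'\bar a}(\bar x,y)$ so that the resulting formula measures the failure of the $(\alpha+1)$-back-and-forth condition, and limit stages take suprema. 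A cardinality argument on the tree of back-and-forth types yields a countable ordinal $\alpha^*<\omega_1$ at which the hierarchy stabilizes, and one packages $\varphi^{\alpha^*}_\emptyset$ together with universally quantified sentences asserting that $\varphi^{\alpha^*}_{\bar x}$ already implies $\varphi^{\alpha^*+1}_{\bar x}$ into a Scott sentence $\sigma_A\in L_{\omega_1,\omega}$.

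With $\sigma_A$ in hand, the hypothesis $A\equiv_\alpha B$ for all $\alpha<\omega_1$ yields $\sigma_A^B=0$, and a separable back-and-forth argument between countable dense pre-substructures $\{a_i\}\subseteq A$ and $\{b_i\}\subseteq B$ produces a sequence of partial maps with vanishing distortion whose limit extends uniquely (by completeness) to an isometric isomorphism $\widehat A\to\widehat B$ preserving all function, relation and constant symbols.

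The main obstacle is the metric-specific clause (iv) of Definition \ref{def: Metric Infinitary Formulas}: the countable infinitary conjunctions and disjunctions used in the recursive construction of $\varphi^\alpha_{\bar a}$ are only legal if all the formulas being combined take values in a common bounded interval and respect a common modulus of continuity. Thus the recursion must be organized so that, at each stage, one can exhibit a uniform modulus $\Delta$ and interval $I$ valid for the entire countable family being collected; managing this uniformity (and, correspondingly, working with the back-and-forth \emph{pseudo-distance} rather than Boolean agreement) is the technical heart of the argument, and is exactly what is developed in \cite{yaacov2017metric}.
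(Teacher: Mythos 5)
The paper does not prove this statement at all—it is imported verbatim as Fact \ref{theorem:metric_scott_iso} from \cite[Theorem 1.2]{yaacov2017metric}—and your sketch faithfully reconstructs the strategy of that reference: the forward direction by induction on formulas, and the backward direction via metric Scott sentences built from the back-and-forth pseudo-distances relative to a weak modulus, which is exactly the machinery the paper itself recalls in Section \ref{subsection:metric_games}, and you have correctly identified clause (iv) of Definition \ref{def: Metric Infinitary Formulas} as the technical crux. The one phrasing I would adjust is the ``cardinality argument on the tree of back-and-forth types'': in the metric setting the stabilization ordinal $\alpha^*$ is obtained from separability, namely for each of the countably many pairs of tuples drawn from fixed countable dense pre-substructures the nondecreasing bounded ordinal-indexed family $r_{\alpha,n}$ stabilizes at some countable ordinal, and one takes the supremum over these countably many ordinals, rather than counting discrete back-and-forth types.
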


\subsection{Metric dynamic Ehrenfeucht-Fra\"iss\'e games}\label{subsection:metric_games}

An illustrative way of characterizing the expressive power of an (infinitary) logic comes from so-called \textit{Ehrenfeucht-Fra\"iss\'e games}. Going back to Hausdorff and what is now a standard back-and-forth argument for Cantor's isomorphism theorem \cite[Section 4.7]{hausdorff_grundzuege}, these games were first considered explicitly by Ehrenfeucht \cite{ehrenfeucht1957application}, based on the work of Fra\"iss\'e \cite{fraisse1955quelques}. 

Given structures $A$ and $B$ of some discrete, relational language $L$ and $n<\omega$, the game $\EF_n(A,B)$ is played between two players -- Player I (the \textit{challenger}) and Player II (the \textit{duplicator}). The challenger wants to prove that $A$ and $B$ are different (and that the considered logic can detect this difference), while the duplicator wants to show that they are similar. In round $k$, Player I chooses an element of $A$ or $B$, and Player II responds by choosing an element of the other structure. The played elements $a_k\in A$ and $b_k\in B$ are recorded, and after $n$ rounds, Player II wins if the resulting sequences $a\in A^n$, $b\in B^n$ satisfy the same quantifier-free $L$-formulas. One can check inductively that if $L$ is finite, II has a winning strategy in $\EF_n(A,B)$ precisely when $A\equiv_n B$, that is, the structures agree on sentences up to quantifier rank $n$. 

Denoting by $\EF_\omega(A,B)$ the analogous game with $\omega$-many rounds, II has a winning strategy in $\EF_\omega(A,B)$ if and only if the structures are indistinguishable to the infinitary logic $L_{\omega_1, \omega}$\footnote{In fact, this game corresponds to $L_{\infty,\omega}$, but we are interested only in countable languages where $L_{\infty,\omega}$ and $L_{\omega_1,\omega}$ always coincide.}, i.e., $A\equiv_{\omega_1}B$. To capture the finer notion $\equiv_\alpha$ of elementary equivalence up to formulas of quantifier-rank $\alpha$ for some $\alpha<\omega_1$, one considers a version of the game with dynamic length. The \textit{dynamic Ehrenfeucht-Fra\"iss\'e game} $\EFD_\alpha(A,B)$ is played like $\EF_n(A,B)$, with the following modification. There is a `game-clock', call it $\beta$, that is set to $\alpha$ before the game starts. In each round, Player I, in addition to playing a usual move, sets the game-clock to some ordinal $\beta'<\beta$. The game ends when $\beta=0$. Note that if $\alpha\geq\omega$, a play of the game can be arbitrarily long, but will always be finite since $\alpha$ is well-ordered. The advantage granted to Player I in the game clock amounts exactly to the expressive power gained in increasing quantifier rank: II has a winning strategy in $\EFD_\alpha(A,B)$ if and only if $A\equiv_\alpha B$. This is due to Karp \cite{KARP2014407}.\\

We define in this section a generalization of the dynamic Ehrenfeucht-Fra\"iss\'e game for metric languages based on the back-and-forth pseudo-distance of Ben Yaacov, Doucha, Nies and Tsankov \cite{yaacov2017metric}, and show in Theorem \ref{theorem:back-and-forth_is_uniformly_borel} that the resulting refining sequence of equivalence relations $(\equiv^{bf}_\alpha)_{\alpha<\omega_1}$ is uniformly Borel in the sense of Defintion \ref{definition:uniformly_borel_equivalence}. 
The na\"ive approach to defining metric EF-games would be to play exactly as in the discrete case, with II winning if $\sup_\varphi|\varphi^A(a)-\varphi^B(b)|=0$ where $\varphi$ ranges over all finitary, quantifier-free formulas. This game is too strong, however. When we consider separable structures $A$ and $B$, we want to restrict the legal moves to some previously fixed dense countable substructures of $A$ and $B$, without changing the outcome of the game. The above described game is sensitive to this restriction: Take $A\in\Mod_\omega(L_{C^*})$ to be a separable $C^*$-algebra, with a countable dense pre-substructure $A_0 \subseteq A$. Find $r\in\mathbb R\setminus\mathbb Q$ such that $||a||\neq r$ for all $a\in A_0$. Setting $A_0'$ to be the (still countable) pre-substructure generated by $A_0\cup\{r\}$, we see that Player I can win in one round in a game between $A_0$ and $A_0'$ -- both of which have completion $A$ -- by playing $r\in A_0'$. We remedy this by loosening the winning requirement for Player II such that values of quantifier-free formulas may differ up to some $\varepsilon>0$ on the played tuples. Note however that this only makes sense if we impose some restriction on the formulas we consider. Indeed, if $a\in A^n$, $b\in B^n$ are such that $|\varphi^A(a)-\varphi^B(b)|>0$ for some quantifier-free $\varphi$, then 
\[\sup_\psi|\psi^A(a)-\psi^B(b)|\geq \sup_{n\in\mathbb N}|(n\varphi)^A(a)-(n\varphi)^B(b)|=\infty\]
where $\psi$ ranges over all quantifier-free formulas.
Following \cite{yaacov2017metric}, this is dealt with by restricting only to formulas $\psi$ which respect a fixed, so-called \textit{weak modulus} $\Omega \colon [0,\infty)^{\mathbb N}\to[0,\infty]$ in the sense of \cite[Definition 2.2]{yaacov2017metric}. That is, the truncations
\[\Omega|_n \colon \begin{cases}
    [0,\infty)^n\to[0,\infty)\\
    (\delta_1,\dots,\delta_n)\mapsto\Omega(\delta_1,\dots,\delta_n,0,0\dots)
\end{cases}\]
are continuous, non-decreasing and sub-additive, and $\Omega$ is determined by the truncations via $\Omega(\delta)=\sup_n\Omega|_n(\delta|_n)$. For a metric language $L$, the set of $L_{\omega_1, \omega}$-$\Omega$-formulas is defined to be the closure of the set of all quantifier-free finitary $L$-formulas under $1$-Lipschitz connectives $u \colon \mathbb R^n\to\mathbb R$, quantification over the free variable of maximal index\footnote{This is to ensure that all $n$-ary $\Omega$-formulas will respect $\Omega|_n$.}, and conjunctions and disjunctions over uniformly bounded sequences of formulas. Note that in contrast to Definition \ref{def: Metric Infinitary Formulas} (iv), there is no additional requirement for equicontinuity when taking infinite disjunctions and conjunctions, since any $n$-ary $\Omega$-formula will respect $\Omega|_n$. For $L$-structures $A$ and $B$, write $A\equiv_\alpha^\Omega B$ if the structures agree on $\Omega$-formulas up to quantifier-rank $\alpha$.

The notion of a dynamic metric Ehrenfeucht-Fra\"iss\'e game defined below is already implicit in the following definition. 
As usual, we denote the concatenation of tuples $a$ and $b$ simply by $ab$.

\begin{definition}[{\cite[Definition 3.1]{yaacov2017metric}}] \label{definition:back-and-forth-distance}
    Let $\Omega \colon [0,\infty)^{\mathbb N}\to[0,\infty]$ be a weak modulus, $n<\omega$ and $\alpha<\omega_1$. For $L$-structures $A$, $B$ and $a\in A^n$, $b\in B^n$, the {back-and-forth pseudo-distance} $r_{\alpha,n}^\Omega(Aa,Bb)$ is defined inductively as
    \begin{itemize}
        \item $r_{0,n}^{\Omega}(Aa,Bb):=\sup_\varphi|\varphi^{A}(a)-\varphi^{B}(b)|$
        where $\varphi$ ranges over finitary quantifier-free $\Omega$-formulas;
        \item $r_{\alpha+1,n}^{\Omega}(Aa,Bb):=\sup_{c\in A,d\in B}\inf_{c'\in A,d'\in B}(\max(r^\Omega_{\alpha,n+1}(Aac,Bbd'),r^\Omega_{\alpha,n+1}(Aac',Bbd)));$
        \item For $\gamma$ a limit, 
        \[r^\Omega_{\gamma,n}(Aa,Bb):=\sup_{\beta<\gamma}r_{\beta,n}^\Omega(Aa,Bb).\]
    \end{itemize}
    Write $A\equiv_\alpha^{bf,\Omega}B$ whenever $r_{\alpha,0}^\Omega(A,B)=0$.
\end{definition}

We restrict attention to separable $L$-structures, which we view as elements of the Borel space $\Mod_\omega(L)$, as discussed above Definition \ref{definition: Mod_omega(L)}. 
In particular, if $A,B \in \Mod_{\omega}(L)$ and $a,b\in\omega^n$, we write $r_{\alpha,n}^\Omega(Aa,Bb)$ for $r_{\alpha,n}^\Omega(\widehat{A}a,\widehat{B}b)$.
By continuity of $r_{\alpha,n}^\Omega$ \cite[Lemma 3.2]{yaacov2017metric}, it is sufficient in the separable case to restrict the range of supremum and infimum in the successor step to a distinguished countable dense set.
An inductive argument shows the following. 

\begin{fact}[{\cite[Proposition 4.3]{yaacov2017metric}}] \label{fact: pseudo bnf distance is Borel}
    $r_{\alpha,n}^{\Omega} \colon {(\Mod_\omega(L)\times\omega^n)}^2\to\mathbb R^{\geq 0}$ is Borel.
\end{fact}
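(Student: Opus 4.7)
The plan is to proceed by transfinite induction on $\alpha < \omega_1$, proving simultaneously for every $n < \omega$ that the map
$(A, a, B, b) \mapsto r^\Omega_{\alpha, n}(Aa, Bb)$
is Borel on $(\Mod_\omega(L) \times \omega^n)^2$. The successor and limit steps are essentially formal once the base case is handled: in each case we express $r^\Omega_{\alpha,n}$ as a countable supremum/infimum of Borel functions already provided by the inductive hypothesis.

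For the base case $\alpha = 0$, I need to show that the supremum
\[
r_{0,n}^\Omega(Aa, Bb) = \sup_\varphi \bigl|\varphi^{\widehat A}(a) - \varphi^{\widehat B}(b)\bigr|,
\]
taken over finitary quantifier-free $\Omega$-formulas $\varphi$, is Borel. The main point is that this supremum can be taken over a \emph{countable} family $\Phi_n$ of formulas that is dense with respect to the modulus $\Omega|_n$: indeed, since $L$ is countable and $\Omega$-formulas are built from quantifier-free $L$-formulas via $1$-Lipschitz connectives and uniformly bounded disjunctions/conjunctions, one obtains such a $\Phi_n$ by restricting to a countable dense set of connectives (e.g.\ $1$-Lipschitz piecewise-linear maps with rational breakpoints). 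For each fixed $\varphi \in \Phi_n$, the map $(A, a) \mapsto \varphi^{\widehat A}(a)$ is Borel by induction on the structure of $\varphi$: it is a polynomial (with rational coefficients) in coordinates of $A$ (reading off from the encoding $X_L$ in \eqref{Borel L-structure}), composed with countably many $1$-Lipschitz maps and countable suprema/infima. Hence $r_{0,n}^\Omega$ is a countable supremum of Borel functions, hence Borel.

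For the successor step, by continuity of $r_{\alpha,n}^\Omega$ in its tuple arguments (\cite[Lemma 3.2]{yaacov2017metric}), and since for $A \in \Mod_\omega(L)$ the set $\{a_i : i < \omega\}$ encoded by the natural numbers is dense in $\widehat A$, we may rewrite
\[
r_{\alpha+1, n}^\Omega(Aa, Bb) = \sup_{c, d \in \omega} \inf_{c', d' \in \omega} \max\bigl( r_{\alpha, n+1}^\Omega(Aac, Bbd'), \, r_{\alpha, n+1}^\Omega(Aac', Bbd) \bigr).
\]
By the inductive hypothesis, the expression inside is Borel in $(A, a, c, c', B, b, d, d')$; since the sup and inf now range over the countable set $\omega$, Borelness is preserved. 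For the limit step at $\gamma < \omega_1$, we have $r_{\gamma, n}^\Omega = \sup_{\beta < \gamma} r_{\beta, n}^\Omega$, a supremum over the countable set of ordinals below $\gamma$ of functions that are Borel by the inductive hypothesis, hence Borel.

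The only genuinely delicate point is verifying in the base case that one can restrict to a countable family of quantifier-free $\Omega$-formulas without altering the supremum; this is the content of the density remark. Everything else is a routine application of the stability of Borel functions under countable sup/inf and the fact that each countable ordinal has only countably many predecessors.
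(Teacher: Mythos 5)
Your proposal is correct and follows essentially the same route the paper indicates for this fact: an induction on $\alpha$ where, by continuity of $r^\Omega_{\alpha,n}$ (\cite[Lemma 3.2]{yaacov2017metric}), the suprema and infima in the successor step are restricted to the countable dense subset $\omega$, and the base and limit cases reduce to countable suprema of Borel functions. The paper defers the details to \cite[Proposition 4.3]{yaacov2017metric}, and your treatment of the base case (restricting to a countable dense family of quantifier-free $\Omega$-formulas) is the standard way to complete that argument.
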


The following fact verifies that $r_\alpha^\Omega$ measures similarity up to $\Omega$-formulas of quantifier-rank $\alpha$.

\begin{fact}[{\cite[Theorem 3.5]{yaacov2017metric}}] \label{fact: bnf pseudo-metric properties}
    $r_{\alpha,n}^{\Omega}(Aa,Bb)=\sup_\varphi|\varphi^{{A}}(a)-\varphi^{B}(b)|$ where $\varphi$ ranges over $n$-ary $\Omega$-formulas of quantifier rank $\leq \alpha$. In particular $A\equiv_\alpha^\Omega B$ if and only if $A\equiv_\alpha^{bf,\Omega}B$.
\end{fact}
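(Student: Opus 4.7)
The plan is to prove the identity
\[ r_{\alpha,n}^{\Omega}(Aa,Bb) = d_\alpha(Aa,Bb) := \sup\bigl\{|\varphi^A(a)-\varphi^B(b)| : \varphi \text{ is an $n$-ary $\Omega$-formula with } \qr(\varphi)\leq\alpha\bigr\} \]
by transfinite induction on $\alpha$. The ``in particular'' clause then follows by specializing to $n=0$ and applying the equivalence $A\equiv_\alpha^\Omega B \iff d_\alpha(A,B)=0 \iff r^\Omega_{\alpha,0}(A,B)=0$. The base case $\alpha=0$ is immediate from the definition of $r_{0,n}^\Omega$. For the limit case $\alpha=\gamma$, observe that $r_{\gamma,n}^\Omega = \sup_{\beta<\gamma} r_{\beta,n}^\Omega$ by definition; the corresponding equation for $d_\gamma$ follows because every $\Omega$-formula of rank $\leq\gamma$ is either itself of rank $<\gamma$ or is built by connectives and infinitary conjunctions/disjunctions from such, and these operations respect the sup-norm separation between $A$ and $B$ by 1-Lipschitzness (and uniform boundedness in the infinitary case).

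For the successor step $\alpha+1$, assuming the inductive hypothesis at $\alpha$, I would prove both inequalities separately. The inequality $r_{\alpha+1,n}^\Omega \geq d_{\alpha+1}$ is the easier direction. I would argue by structural induction on the $\Omega$-formula $\varphi$ of rank $\leq\alpha+1$. The quantifier case is the only nontrivial step: if $\varphi=\inf_y\psi$ with $\qr(\psi)\leq\alpha$, then for each $c\in A$ and $\varepsilon>0$ the definition of $r_{\alpha+1,n}^\Omega$ yields (by picking any fixed $d$ first and extracting the duplicator's response $d'$) an element $d'\in B$ with $r_{\alpha,n+1}^\Omega(Aac,Bbd')\leq r_{\alpha+1,n}^\Omega(Aa,Bb)+\varepsilon$; the inductive hypothesis applied to $\psi$ then gives $|\psi^A(ac)-\psi^B(bd')|\leq r_{\alpha+1,n}^\Omega(Aa,Bb)+\varepsilon$, and a standard argument on infima yields $|\varphi^A(a)-\varphi^B(b)|\leq r_{\alpha+1,n}^\Omega(Aa,Bb)+\varepsilon$. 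The $\sup_y$ case and connective cases are analogous, using 1-Lipschitzness for connectives and uniform bounds for infinitary disjunctions/conjunctions.

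The main obstacle, as expected, is the inequality $r_{\alpha+1,n}^\Omega\leq d_{\alpha+1}$. Set $D:=d_{\alpha+1}(Aa,Bb)$. By the inductive hypothesis, $r_{\alpha,n+1}^\Omega = d_\alpha$ on $(n+1)$-tuples, so it suffices to show: for every $c\in A$ and $\varepsilon>0$ there exists $d'\in B$ with $|\psi^A(ac)-\psi^B(bd')|\leq D+\varepsilon$ for every $\Omega$-formula $\psi$ of rank $\leq\alpha$ in $n+1$ variables (and the symmetric statement with roles of $A$ and $B$ swapped). I would proceed by contradiction: if no such $d'$ exists, then for every $d'$ in a countable dense pre-substructure $B_0\subseteq B$ one has a rank-$\leq\alpha$ $\Omega$-formula $\psi_{d'}$ witnessing a discrepancy larger than $D+\varepsilon$. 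The idea is then to assemble a single rank-$(\alpha+1)$ $\Omega$-formula that encodes this obstruction globally, and whose values at $a$ and $b$ must therefore differ by strictly more than $D$, contradicting the definition of $D$. Concretely, for each $d'\in B_0$ one uses a 1-Lipschitz truncation to extract the ``sign'' of the discrepancy (so all $\psi_{d'}$ can be taken to separate in the same direction), then combines them via an infinitary disjunction; the resulting formula $\chi(x,y)$ has rank $\leq\alpha$ (disjunctions do not raise quantifier rank) and the formula $\inf_y \chi(x,y)$ (rank $\leq\alpha+1$) realizes a separation exceeding $D$ between $a$ in $A$ and $b$ in $B$ after continuously extending to $B$ via density. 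The delicate point will be to arrange that the countable family $(\psi_{d'})_{d'\in B_0}$ is uniformly bounded and respects $\Omega|_{n+1}$ so that its infinitary disjunction is a legitimate $\Omega$-formula; this is exactly where the restriction to $\Omega$-formulas (rather than arbitrary $L_{\omega_1,\omega}$-formulas) is essential, and it is what forces the use of $\equiv_\alpha^\Omega$ in place of $\equiv_\alpha$ in the metric setting.
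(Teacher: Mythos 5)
The paper does not actually prove this statement: it is imported verbatim as a Fact from \cite[Theorem 3.5]{yaacov2017metric}, so there is no internal argument to compare against. Your reconstruction follows the standard Karp-style transfinite induction that the cited source itself uses, and it is sound: in the hard successor direction, the sign normalization, the $1$-Lipschitz truncation to a common bounded interval, and the infinitary disjunction over the countable dense $B_0$ followed by a single quantifier are exactly the right ingredients, and you correctly identify the restriction to $\Omega$-formulas as what legitimizes the infinitary combination. Two points you pass over a little quickly, neither fatal: the base case is not literally ``immediate from the definition,'' since rank-$0$ $\Omega$-formulas already include $1$-Lipschitz connectives and infinitary conjunctions/disjunctions of the finitary quantifier-free formulas over which $r^{\Omega}_{0,n}$ is defined (the same sup/Lipschitz commutation you invoke at limit stages closes this); and the extension of the contradiction from $d'\in B_0$ to all of $B$ needs that the assembled formula $\chi$ respects $\Omega|_{n+1}$ and is therefore uniformly continuous in the last variable, which holds because a uniformly bounded supremum of functions respecting a common modulus again respects it.
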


While back-and-forth pseudo-distance only considers formulas respecting some fixed weak modulus $\Omega$, this restriction is overcome in \cite[Chapter 5]{yaacov2017metric}. Namely, the authors prove existence of a so-called \textit{universal modulus} $\Omega$ such that for any $L_{\omega_1, \omega}$-sentence $\sigma$, there is an equivalent $L_{\omega_1,\omega}$-$\Omega$-sentence $\sigma'$. The following fact summarizes this.

\begin{fact}[{\cite[Section 5]{yaacov2017metric}}]\label{fact: universal modlus}
    There exists a weak modulus $\Omega$ such that for any $L$-structures $A$ and $B$,
    \[\big(\forall \alpha<\omega_1:~A\equiv_\alpha^{bf,\Omega} B\big) \iff A\cong B.\]
\end{fact}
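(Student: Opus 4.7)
The plan is to reduce the claim to metric Scott's isomorphism theorem (Fact~\ref{theorem:metric_scott_iso}) by constructing a single weak modulus $\Omega$ in which the Scott sentence construction carries through uniformly. Recall that for each separable $L$-structure $A$, the metric Scott theorem produces an $L_{\omega_1,\omega}$-sentence $\sigma_A$ of some countable quantifier rank $\alpha_A$ whose zero-set equals the isomorphism class of $A$. My aim is to find $\Omega$ such that $\sigma_A$ can always be chosen as an $L_{\omega_1,\omega}$-$\Omega$-sentence. Granted this, $A\cong B\Rightarrow A\equiv_\alpha^{bf,\Omega}B$ is automatic as isomorphisms preserve formula values; conversely if $A\not\cong B$ then $\sigma_A^B\neq 0 = \sigma_A^A$, so Fact~\ref{fact: bnf pseudo-metric properties} witnesses $A\not\equiv^{bf,\Omega}_{\alpha_A}B$.

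To pin down the universal $\Omega$, I would trace through the recursive construction of the Scott sentence in \cite{yaacov2017metric} and collect every modulus of continuity that appears. The construction uses only: atomic formulas with their intrinsic moduli $\Delta_{R_i}, \Delta_{f_j}$ from $L$; $\ell^\infty$-$1$-Lipschitz connectives drawn from some fixed countable dense family (these preserve the modulus of their arguments, so no blow-up occurs); quantification over the free variable of maximal index (which preserves the modulus on the remaining variables); and countable suprema and infima of uniformly equicontinuous families (bounded by the common modulus of the family). Since $L$ is countable, the resulting pool of moduli is still countable, and a single weak modulus $\Omega$ dominating each arity-$n$ member of the pool can be assembled truncation by truncation using that a weak modulus is determined by its finite truncations $\Omega|_n$.

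The main obstacle is ensuring at every recursive stage of the Scott sentence construction, particularly when forming countable suprema and infima, that the produced formula remains bounded with respect to $\Omega$. The key observation is that the building blocks of the Scott sentence, essentially the back-and-forth types $r_{\alpha, n}^\Omega$ themselves, respect $\Omega|_n$ by the very definition of the back-and-forth pseudo-distance, so their countable combinations are automatically legitimate $\Omega$-formulas. A transfinite induction on $\alpha$ then yields that $\sigma_A$ can be taken as an $L_{\omega_1,\omega}$-$\Omega$-sentence for all separable $A$, completing the proof.
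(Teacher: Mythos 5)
First, note that the paper does not actually prove this statement: it is quoted as a black-box Fact from \cite{yaacov2017metric}, so there is no internal proof to compare against, and you are in effect attempting to reprove a cited external theorem. Your overall architecture is the right shape --- reduce to the existence of a Scott sentence that is an $\Omega$-sentence, get the forward implication for free, and get the converse from Fact \ref{fact: bnf pseudo-metric properties} applied to $\sigma_A$. The problem is that the step you dispose of in one sentence is exactly where all of the content of the theorem lives, and as written that step fails.

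Specifically, the claim that ``a single weak modulus $\Omega$ dominating each arity-$n$ member of the pool can be assembled truncation by truncation'' is false in general: each truncation $\Omega|_n$ must be a finite-valued modulus of continuity (in particular continuous and vanishing at $0$), and a countable family of moduli need not admit such a dominating modulus --- consider $\Delta_k(\delta)=\min(k\delta,1)$ or $\Delta_k(\delta)=\delta^{1/k}$, whose pointwise supremum is discontinuous at $0$. Such families genuinely occur among the quantifier-free formulas of a \emph{fixed} arity: already in $L_{C^*}$ the atomic formulas built from the terms $x^k$ have moduli whose Lipschitz constants on a fixed ball grow without bound in $k$, so no single $\Omega|_1$ dominates all unary quantifier-free formulas. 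This obstruction is precisely why the universal weak modulus is a theorem rather than bookkeeping: the construction in \cite[Section 5]{yaacov2017metric} does \emph{not} produce an $\Omega$ whose truncations dominate the moduli of all quantifier-free formulas, but instead fixes a specific $\Omega_u$ with geometrically increasing weights and proves that every $L_{\omega_1,\omega}$-formula is \emph{equivalent} to an $\Omega_u$-formula of possibly higher quantifier rank, via a Lipschitz regularization (inf-convolution) of each formula; your proof needs that result or a substitute for it. Relatedly, your fallback remark that the building blocks of the Scott sentence are the types $r^{\Omega}_{\alpha,n}$, ``which respect $\Omega|_n$ by the very definition,'' is circular for your purposes: a Scott sentence assembled from the $\Omega$-relative back-and-forth types has zero-set equal to the $r^\Omega_\infty$-class of $A$, and the assertion that this class coincides with the isomorphism class is exactly the statement to be proved. (A minor further point: the equivalence can only be expected for separable structures, as in Fact \ref{theorem:metric_scott_iso}; your argument, like the paper's statement, implicitly assumes this.)
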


Note that this universal modulus depends on the language $L$. We fix from here onward $\Omega$ to be a modulus as in Fact \ref{fact: universal modlus} and omit it in our notation. In particular, we write $r_{\alpha,n}$ for the back-and-forth pseudo-distance with respect to the fixed universal modulus, and for $A,B\in\Mod_\omega(L)$ we write $A\equiv_\alpha^{bf}B$ if $r_{\alpha,0}(A,B)=0$. 

We now turn to characterizing the pseudo-distance $r_{n,\alpha}$ in terms of a game.
Denote by $\LO\subseteq\mathcal{P}(\omega)\times\mathcal{P}(\omega^2)$ the set of linear orders on subsets of $\omega$.
For $L$ a metric language, $L$-structures $A$ and $B$ and $(\lambda,\prec)\in\LO$ the \emph{metric dynamic Ehrenfeucht-Fra\"iss\'e Game} $\EFD_{\prec,\varepsilon}(A,B)$ is played between Players I and II in the following way. The \textit{game clock} is set first to $\beta_0:=\infty$. For $k\geq 1$, the $k$-th round is played as follows.
\begin{enumerate}
    \item Player I plays either a pair $(\beta_k,a_k)$ or a pair $(\beta_k,b_k)$, where $\beta_k\in\lambda$ with $\beta_k\prec\beta_{k-1}$, and $a_k\in A$ or $b_k\in B$.
    \item Player II responds by playing $b_k\in B$ if I played an element $a_k\in A$, or by playing $a_k\in A$ if I played an element $b_k\in B$.
\end{enumerate}
We call the tuple $(\beta_k,(a_1,\dots,a_k),(b_1,\dots,b_k))$ the \textit{position at $k$}. The game ends after round $k$ if $\beta_k$ is the minimum of $(\lambda,\prec)$ (if a minimum exists). 
In this case, Player II wins if and only if $r_{0,k}(Aa_1\dots a_k,Bb_1\dots b_k)<\varepsilon$. Note that if $\prec$ is not well-ordered, the game may go on for countably infinitely many rounds, after which infinite tuples $a\in A^\omega$ and $b\in B^\omega$ will have been played and the game ends. In this case, Player II wins if and only if $r_{0,k}(Aa_1\dots a_k,Bb_1\dots b_k)<\varepsilon$ for all $k<\omega$.  

While we have defined $\EFD_{\prec,\varepsilon}(A,B)$ for arbitrary $L$-structures, we will be interested only in separable ones, viewed as elements of the space $\Mod_\omega(L)$ defined in Section \ref{subsection: Metric Infinitary Logic}. In a game on separable structures $A,B\in\Mod_\omega(L)$, we require all moves $a_k$ and $b_k$ to be elements of the fixed countable dense subset of $A$ and $B$ respectively, which we identify with $a_k,b_k\in\omega$. We will define what a \textit{winning strategy} for Player II is in terms of separable structures, but this can straight-forwardly be generalized to arbitrary $L$-structures. 
Intuitively, a strategy for II gives for each potential position of the game at least one legal move that II can respond with. Such a strategy is winning for II if II is guaranteed to win as long as they play by this strategy. 

\begin{definition}\label{definition: metric EFD winning strategy for II}
    Let $A,B\in \Mod_\omega(L)$, $(\lambda,\prec)\in\LO$ and $\varepsilon>0$. A winning strategy for $\mathrm{II}$ in $\EFD_{\prec,\varepsilon}(A,B)$ is a sequence $(S_n)_{n\geq 1}$ with $S_n\subseteq \omega^n\times\omega^n\times\omega^n$ such that 
    \begin{itemize}
        \item $\forall \beta\in\lambda\ \forall c\in\omega \ \exists d,d'\in\omega ~\colon (\beta,c,d),(\beta,d',c)\in S_1$,
    \end{itemize}
    and for all $n<\omega$ and $(\beta,a,b)\in S_n$,
    \begin{itemize}
        \item $\forall \beta_{n+1}\prec\beta_n\ \forall c\in \omega\ \exists d,d'\in\omega ~\colon (\beta\beta_{n+1},ac,bd),(\beta\beta_{n+1},ad',bc)\in S_{n+1}$;
        \item $r_{0,n}(Aa,Bb)<\varepsilon$.
    \end{itemize}
    We write $\mathrm{II} \uparrow\EFD_{\prec,\varepsilon}(A,B)$ if there exists a winning strategy for Player $\mathrm{II}$, and we denote the set of winning strategies for $\mathrm{II}$ by $\mathcal{S}_{\prec,\varepsilon}^{A,B}(\mathrm{II})\subseteq \prod_n\mathcal{P}(\omega^n\times\omega^n\times\omega^n)$.
\end{definition}

Analogously, we define the notion of a winning strategy for Player I. For $(S_n)_{n\in\mathbb N}\in \prod_{n\in\mathbb N}\omega^n\times\omega^n\times\omega^n$ and $(\beta,a,b)\in S_n$, we write $S_{n+1}(\beta,a,b):=\{(\beta\beta_{n+1},ac,bd)\in S_{n+1}\}$.

\begin{definition}\label{definition: metric EFD winning strategy for I}
    Let $A,B\in \Mod_\omega(L)$, $(\lambda,\prec)\in \LO$ and $\varepsilon>0$. A winning strategy for $\mathrm{I}$ in $\EFD_{\prec,\varepsilon}(A,B)$ is a sequence $(S_n)_{n\in\mathbb N}$ with $S_n\subseteq \omega^n\times\omega^n\times\omega^n$ such that 
    \begin{itemize}
        \item $\exists \beta\in\lambda \ \exists c\in\omega ~\colon \big(\forall d\in\omega ~\colon (\beta,c,d)\in S_1\big)\ \vee \ \big(\forall d\in\omega~\colon (\beta,d,c)\in S_1\big)$,
    \end{itemize}
    and for any $(\beta,a,b)\in S_n$,
    \begin{itemize}
        \item if $\beta_n\neq \min(\lambda,\prec)$ then $\exists\beta_{n+1}\prec\beta_n\ \exists c\in \omega $ s.t. $ S_{n+1}(\beta,a,b)=\{(\beta\beta_{n+1},ac,bd)\colon d\in\omega\}$ or $S_{n+1}(\beta,a,b)=\{(\beta\beta_{n+1},ad',bc)\colon d'\in\omega\};$
        \item if $\beta_n=\min(\lambda,\prec)$ then $r_{0,n}(Aa,Bb)\geq\epsilon$;
        \item $\exists k<\omega\ \exists (\beta\beta',ac,bd)\in S_{n+k}~\colon \ r_{0,n+k}(Aac,Bbd)\geq\varepsilon$.
    \end{itemize}
    We write $\mathrm{I}\uparrow\EFD_{\prec,\varepsilon}(A,B)$ if there exists a winning strategy for Player $\mathrm{I}$, and denote the set of winning strategies for $\mathrm{I}$ by $\mathcal{S}_{\prec,\varepsilon}^{A,B}(\mathrm{I})\subseteq \prod_n\mathcal{P}(\omega^n\times\omega^n\times\omega^n)$.
\end{definition}

For $a\in \omega^n$ and $A\in\Mod_\omega(L)$, denote by $Aa$ the expansion of $A$ by an $n$-ary constant symbol interpreted as $a$. This is a separable structure in the language $L\cup\{c_1,\dots,c_n\}$. Note that the game $\EFD_{\prec,\varepsilon}(Aa,Bb)$ is equivalent to the game $\EFD_{n+\prec,\varepsilon}(A,B)$, with $a$ and $b$ as the forced first $n$ moves, in the sense that a winning strategy for II in either one of these games translates to one in the other.

The metric dynamic Ehrenfeucht-Fra\"iss\'e game is defined precisely in a way that models the back-and-forth pseudo-metric. This is formalized in the following.

\begin{proposition}\label{proposition: Games correspond to bnf pseudo-distance}
    Let $A,B\in\Mod_\omega(L)$, $a,b\in\omega^n$, $\alpha<\omega_1$ and $\varepsilon>0$. Then $\mathrm{II} \uparrow\EFD_{\alpha,\varepsilon}(Aa,Bb)$ if and only $r_{n,\alpha}(Aa,Bb)<\varepsilon$. 
\end{proposition}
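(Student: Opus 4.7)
The plan is to prove the proposition by transfinite induction on $\alpha$, with the statement strengthened uniformly over all $n < \omega$ and all tuples $a, b \in \omega^n$. As noted in the paragraph preceding the proposition, the game $\EFD_{\alpha, \varepsilon}(Aa, Bb)$ is equivalent to the game $\EFD_{n+\alpha, \varepsilon}(A, B)$ with $a, b$ as forced first $n$ moves, so the induction carries the tuple extension naturally. The base case $\alpha = 0$ is immediate from the definitions: the game ends after the forced moves, and Definition \ref{definition: metric EFD winning strategy for II} (reduced to a trivial tree with no further rounds) says II wins iff $r_{0, n}(Aa, Bb) < \varepsilon$, which matches the base clause of Definition \ref{definition:back-and-forth-distance}.

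For the successor step $\alpha = \beta + 1$, I would analyze I's first move: I chooses a clock value $\beta' \leq \beta$ and plays either $c \in \omega$ (dense in $A$) or $d \in \omega$ (dense in $B$), and II responds with a dense-set element from the opposite structure. For the direction $r_{\beta+1, n}(Aa, Bb) < \varepsilon \implies \mathrm{II} \uparrow \EFD_{\beta+1, \varepsilon}$, I would pick an intermediate $\varepsilon' \in (r_{\beta+1, n}(Aa, Bb), \varepsilon)$ and extract from the sup-inf-max defining $r_{\beta+1, n}$ response maps $c \mapsto d'(c)$ and $d \mapsto c'(d)$ such that $r_{\beta, n+1}$ on each sub-position is at most $\varepsilon'$, using continuity of $r_{\beta, n+1}$ from \cite[Lemma 3.2]{yaacov2017metric} to find witnesses inside $\omega$. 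Applying the IH gives winning strategies for II in each subgame $\EFD_{\beta, \varepsilon}$; since winning at clock $\beta$ implies winning at clock $\beta' \leq \beta$, these assemble into the desired winning strategy in $\EFD_{\beta+1, \varepsilon}(Aa, Bb)$. For the reverse direction, I would restrict II's winning strategy to the case where I picks the maximal clock $\beta' = \beta$, extract the response maps $d'(c)$ and $c'(d)$, apply the IH to obtain $r_{\beta, n+1} < \varepsilon$ in each sub-position, and use the sup-inf-max formula for $r_{\beta+1, n}$ to conclude.

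The limit case $\alpha = \gamma$ is handled analogously: a winning strategy in $\EFD_{\gamma, \varepsilon}(Aa, Bb)$ restricts, via I's first move setting the clock to $\beta_1 < \gamma$, to a winning strategy in $\EFD_{\beta_1, \varepsilon}$, and the IH yields $r_{\beta_1, n}(Aa, Bb) < \varepsilon$ for all $\beta_1 < \gamma$; conversely, $r_{\gamma, n} < \varepsilon$ implies $r_{\beta_1, n} < \varepsilon$ for every $\beta_1 < \gamma$, and II responds to I's first move by invoking the $\EFD_{\beta_1, \varepsilon}$-winning strategy produced by the IH. The main obstacle I anticipate is the passage from the non-strict bound $\leq \varepsilon$ to the strict bound $< \varepsilon$ in the forward direction at limit ordinals (and at successors, where both legs of the sup-inf-max must simultaneously witness strict bounds). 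I expect to address this by threading an auxiliary parameter $\varepsilon'' < \varepsilon$ through each inductive step, exploiting the countable branching of the game on separable structures together with the continuity of $r_{\beta, n+1}$ from \cite[Lemma 3.2]{yaacov2017metric} to ensure that witnesses strictly beating the threshold lie inside the countable dense set $\omega$.
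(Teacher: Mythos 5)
Your proposal follows essentially the same route as the paper: transfinite induction on $\alpha$ (uniformly in $n$, $a$, $b$), with the base case immediate, the successor step handled by unwinding the $\sup$-$\inf$-$\max$ definition of $r_{\alpha+1,n}$ into a $\forall c\,\exists d,d'$ statement over the countable dense sets (via continuity of $r_{\alpha,n}$) and translating that into extraction and assembly of sub-strategies, and the limit step by restricting strategies to smaller clock values. The only substantive difference is that you explicitly flag, and propose to patch, the strict-versus-non-strict inequality issue in passing from $r_{\beta,n}<\varepsilon$ for all $\beta<\gamma$ to $r_{\gamma,n}<\varepsilon$, a point the paper's proof passes over silently.
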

\begin{proof}
    We use induction on $\alpha$.
    Clearly, the statement holds for $\alpha=0$. 

    For the limit step, it is enough to observe that whenever $\gamma<\omega_1$ is a limit ordinal, $\mathrm{II} \uparrow\EFD_{\gamma,\varepsilon}(Aa,Bb)$ if and only if $\mathrm{II} \uparrow\EFD_{\alpha,\varepsilon}(Aa,Bb)$ for all $\alpha<\gamma$.

    For the successor step, note that
    \[r_{\alpha+1,n}(Aa,Bb)<\varepsilon\iff\forall c<\omega \ \exists d,d'<\omega \ \colon \ \max\big(r_{\alpha,n+1}^{A,B}(ac,bd),r_{\alpha,n+1}(Aad',Bbc)\big)<\varepsilon\]
    by the inductive definition of $r_{\alpha,n}$.
    Thus, it is enough to show that 
    \begin{align*}\label{eq:Games}
        \textrm{II} \uparrow\EFD_{\alpha+1,\varepsilon}(Aa,Bb)\iff  \forall c<\omega \ \exists d,d'<\omega \colon~ & \textrm{II} \uparrow\EFD_{\alpha,\varepsilon}(Aac,Bbd)\\
        \text{ and }& \textrm{II} \uparrow\EFD_{\alpha,\varepsilon}(Aad',Bbc).
    \end{align*}
    Suppose $(S_n)_{n\in\mathbb N}$ is a winning strategy for II in $\EFD_{\alpha+1,\varepsilon}(Aa,Bb)$. Let $c\in \omega$. Then there is some $d\in\omega$ such that $(\alpha,c,d)\in S_1$. Now, setting for each $n\in\mathbb N$,
    \[S_n':=\{(\beta,e,f)\in\omega\times\omega^n\times\omega^n \colon (\beta,ce,df)\in S_{n+1}\}\]
    defines a winning strategy for II in $\EFD_{\alpha,\varepsilon}(Aac,Bbd)$. A winning strategy for II in $\EFD_{\alpha,\varepsilon}(Aad',Bbc)$ can be found analogously.

    On the other hand, suppose for each $c<\omega$, there exists $d(c),d'(c)<\omega$ and winning strategies $(S_{c,n})_n\in\mathcal{S}_{\alpha,\varepsilon}^{Aac,Bbd}(\mathrm{II})$ and $(S_{c,n}')_n\in \mathcal{S}_{\alpha,\varepsilon}^{Aad',Bbc}(\mathrm{II})$. 
    Then setting 
    \[S_1:=\{(\beta,c,d(c)) \colon c\in \omega,\beta\in\alpha\}\cup\{(\beta,d'(c),c) \colon c\in\omega,\beta\in\alpha\}\] and 
    \[S_{n+1}:=\{(\beta,ce,d(c)f) \colon (\beta,e,f)\in S_{c,n}\}\cup \{(\beta,d'(c)e,cf) \colon ~(\beta,e,f)\in S_{c,n}'\}\]
    yields a winning strategy for $\mathrm{II} $ in $\EFD_{\alpha+1}^\varepsilon(Aa,Bb)$.
\end{proof}

In particular, denoting by $\EFD_\alpha(A,B)$ the game where Player I gets to choose some $\varepsilon=\frac{1}{n}$ in the first round, after which $\EFD_{\alpha,\varepsilon}$ is played, we obtain the following corollary.

\begin{corollary}
    For $A,B\in\Mod_\omega(L)$, $A\equiv_\alpha^{bf} B$ if and only if $\mathrm{II}\uparrow \EFD_\alpha(A,B)$.
\end{corollary}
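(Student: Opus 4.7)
The plan is to derive this as an immediate consequence of the preceding proposition, by unwinding the definitions of $\equiv_\alpha^{bf}$ and of $\EFD_\alpha(A,B)$.

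First I would recall that, by the conventions fixed after Fact \ref{fact: universal modlus}, $A \equiv_\alpha^{bf} B$ means precisely $r_{\alpha,0}(A,B) = 0$. Since $r_{\alpha,0}$ is a non-negative real-valued function, this is equivalent to the countable condition
\[ \forall n \geq 1: \ r_{\alpha, 0}(A,B) < \tfrac{1}{n}. \]
Applying Proposition \ref{proposition: Games correspond to bnf pseudo-distance} with $a, b$ the empty tuple (and $\varepsilon = 1/n$), each of these inequalities is equivalent to $\mathrm{II} \uparrow \EFD_{\alpha, 1/n}(A, B)$. Thus $A \equiv_\alpha^{bf} B$ if and only if $\mathrm{II} \uparrow \EFD_{\alpha, 1/n}(A, B)$ for every $n \geq 1$.

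It remains to identify this last condition with $\mathrm{II} \uparrow \EFD_\alpha(A, B)$. By definition of $\EFD_\alpha(A, B)$, Player $\mathrm{I}$'s very first move is to choose some $\varepsilon = 1/n$, after which $\EFD_{\alpha, 1/n}(A, B)$ is played. Consequently, any winning strategy for $\mathrm{II}$ in $\EFD_\alpha(A, B)$ restricts, for each potential choice $n$ by Player $\mathrm{I}$, to a winning strategy in $\EFD_{\alpha, 1/n}(A, B)$; conversely, any family $(T_n)_{n \geq 1}$ of winning strategies for $\mathrm{II}$ in $\EFD_{\alpha, 1/n}(A, B)$ assembles (by cases on $\mathrm{I}$'s first move) into a winning strategy in $\EFD_\alpha(A, B)$. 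Combining this with the chain of equivalences from the previous paragraph yields the statement.

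No serious obstacle is anticipated: the content lies in the preceding proposition, and the only conceptual point here is that the rule allowing Player $\mathrm{I}$ to select the tolerance $\varepsilon = 1/n$ at the start of $\EFD_\alpha(A, B)$ is exactly what promotes the quantitative approximations $r_{\alpha,0}(A,B) < 1/n$ to the exact equality $r_{\alpha,0}(A,B) = 0$ that defines $\equiv_\alpha^{bf}$. One minor care is to note that Player $\mathrm{II}$ is not required to produce a single strategy working uniformly in $n$, but rather a strategy that responds appropriately after seeing Player $\mathrm{I}$'s initial choice of $n$, which is exactly the format of a winning strategy in $\EFD_\alpha(A, B)$ per Definition \ref{definition: metric EFD winning strategy for II}.
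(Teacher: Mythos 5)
Your proposal is correct and follows essentially the same route as the paper: both reduce $A\equiv_\alpha^{bf}B$ to $r_{\alpha,0}(A,B)=0$, i.e.\ to $r_{\alpha,0}(A,B)<1/n$ for all $n$, invoke Proposition \ref{proposition: Games correspond to bnf pseudo-distance} for each $\varepsilon=1/n$, and observe that a winning strategy for $\mathrm{II}$ in $\EFD_\alpha(A,B)$ amounts precisely to having winning strategies in $\EFD_{\alpha,\varepsilon}(A,B)$ for every $\varepsilon>0$. Your additional remark that $\mathrm{II}$ need only respond to Player $\mathrm{I}$'s announced tolerance, rather than play uniformly in $n$, is a correct and slightly more explicit unpacking of the same point.
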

\begin{proof}
    By definition, $A\equiv_\alpha^{bf} B$ if any only if $r_{0,\alpha}(A,B)=0$. The statement then follows immediately from Proposition \ref{proposition: Games correspond to bnf pseudo-distance} and the fact that a winning strategy for II in $\EFD_\alpha(A,B)$ is precisely a winning strategy for $\mathrm{II}$ in $\EFD_{\alpha,\varepsilon}(A,B)$ for every $\varepsilon>0$.
\end{proof}

\begin{lemma}\label{lemma: set of winning strategies is Borel}
    For any $\varepsilon>0$, $(\lambda,\prec)\in\LO$ and $A,B\in\Mod_\omega(L)$, the sets $\mathcal{S}_{\prec,\varepsilon}^{A,B}(\mathrm{I})$ and $\mathcal{S}_{\prec,\varepsilon}^{A,B}(\mathrm{II})$ are Borel in $\prod_{n\in\mathbb N}\mathcal{P}(\omega^n\times\omega^n\times\omega^n)$.
\end{lemma}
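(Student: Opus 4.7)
The plan is to unfold each defining clause in Definitions \ref{definition: metric EFD winning strategy for II} and \ref{definition: metric EFD winning strategy for I} and observe that every clause reduces to a countable Boolean combination of two kinds of atomic pieces: (i) coordinate conditions of the form ``$(\beta,a,b)\in S_n$'', which will turn out to be clopen in the ambient product space; and (ii) conditions on the fixed parameters $A,B$ of the form $r_{0,n}(Aa,Bb)<\varepsilon$ or $r_{0,n}(Aa,Bb)\geq\varepsilon$. First, I identify $\prod_{n\in\mathbb N}\mathcal{P}(\omega^n\times\omega^n\times\omega^n)$ with the Polish space $\prod_n 2^{\omega^n\times\omega^n\times\omega^n}$; under this identification, for each $n$ and each $(\beta,a,b)\in\omega^n\times\omega^n\times\omega^n$, the evaluation map $(S_m)_m\mapsto \mathbf{1}_{S_n}(\beta,a,b)$ is a coordinate projection, so the set $\{(S_m)_m : (\beta,a,b)\in S_n\}$ and its complement are clopen.

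Next, fixing $A,B\in\Mod_\omega(L)$ and $\varepsilon>0$, I set
\[ R_n^< := \{(a,b)\in\omega^n\times\omega^n : r_{0,n}(Aa,Bb)<\varepsilon\}, \qquad R_n^\geq := (\omega^n\times\omega^n)\setminus R_n^<. \]
Both are subsets of the countable discrete set $\omega^n\times\omega^n$, hence trivially Borel (this also follows from Fact \ref{fact: pseudo bnf distance is Borel}). With these pieces in hand, the definition of $\mathcal{S}_{\prec,\varepsilon}^{A,B}(\mathrm{II})$ becomes a countable intersection of the base-case clause indexed by $(\beta,c)\in\lambda\times\omega$ requiring the existence of $d,d'\in\omega$ with $(\beta,c,d),(\beta,d',c)\in S_1$, together with clauses, indexed by $n\geq 1$ and $(\beta,a,b)\in\lambda^n\times\omega^n\times\omega^n$, of the form ``either $(\beta,a,b)\notin S_n$, or $(a,b)\in R_n^<$ and for each $\beta_{n+1}\prec\beta_n$ and $c\in\omega$ there exist $d,d'\in\omega$ with $(\beta\beta_{n+1},ac,bd),(\beta\beta_{n+1},ad',bc)\in S_{n+1}$''. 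Each such clause is a countable Boolean combination of clopen atomic pieces and the Borel sets $R_n^<$, and therefore Borel; intersecting over the countably many indices keeps the result Borel.

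The argument for $\mathcal{S}_{\prec,\varepsilon}^{A,B}(\mathrm{I})$ is entirely parallel. Three atomic building blocks appear: the terminal winning condition ``$r_{0,n}(Aa,Bb)\geq\varepsilon$'', which is just membership in $R_n^\geq$; the single-move prescription ``$S_{n+1}(\beta,a,b)=\{(\beta\beta_{n+1},ac,bd):d\in\omega\}$'' (or its dual), which is a countable conjunction of clopen membership and non-membership conditions on $S_{n+1}$; and the eventual-win clause $\exists k<\omega\,\exists (\beta\beta',ac,bd)\in S_{n+k}: r_{0,n+k}(Aac,Bbd)\geq\varepsilon$, which is a countable disjunction of clopen and Borel pieces. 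Assembling them as prescribed by Definition \ref{definition: metric EFD winning strategy for I} yields again a Borel set. The main work is bookkeeping: one must verify that every quantifier appearing in the two definitions ranges over a countable set (the linear order $\lambda$, the domains $\omega$, the lengths $n,k$), after which closure of the Borel $\sigma$-algebra under countable operations delivers both statements. No genuine descriptive set theoretic obstacle arises; the only mildly delicate point is the winning predicate involving $r_{0,n}$, which is harmless here because $A,B$ are fixed parameters and the relevant domain is countable and discrete.
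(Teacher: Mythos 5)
Your proposal is correct and follows essentially the same route as the paper, which simply observes that the lemma is immediate from the presentation of Definitions \ref{definition: metric EFD winning strategy for II} and \ref{definition: metric EFD winning strategy for I} together with Borelness of $r_{0,n}$ (Fact \ref{fact: pseudo bnf distance is Borel}); you have merely spelled out the bookkeeping that the paper leaves implicit. The observations that the membership conditions are clopen coordinate conditions, that the $r_{0,n}$-conditions are trivially Borel once $A,B$ are fixed, and that all quantifiers range over countable sets are exactly the points the paper's one-line proof relies on.
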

\begin{proof}
    This is immediate from the presentation of Definition \ref{definition: metric EFD winning strategy for II} and Definition \ref{definition: metric EFD winning strategy for I}, together with the fact that the maps $r_{0,n}(A,\cdot,B,\cdot) \colon \omega^n\times\omega^n\to\mathbb R^+$ are Borel by Fact \ref{fact: pseudo bnf distance is Borel}.
\end{proof}

We show now that the metric dynamic EF-games are \textit{determined}, that is, one of the two players always has a winning strategy (but not both).

\begin{lemma}\label{lem: determinacy of metric EFD}
    For any $A,B\in\Mod_\omega(L)$, $\varepsilon>0$ and $\prec\in\LO$,
    \[ \mathrm{I} \uparrow\EFD_{\prec,\varepsilon}(A,B)\iff \neg \mathrm{II} \uparrow\EFD_{\prec,\varepsilon}(A,B).\]
\end{lemma}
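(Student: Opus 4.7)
The plan is to recognize this statement as an instance of open-game determinacy, in the spirit of the Gale--Stewart theorem. The direction $\mathrm{I} \uparrow \EFD_{\prec,\varepsilon}(A,B) \implies \neg\, \mathrm{II}\uparrow \EFD_{\prec,\varepsilon}(A,B)$ is straightforward: given winning strategies for both players, playing them against each other produces a single play that both sides simultaneously win, which is impossible by inspecting the closing clauses of Definitions \ref{definition: metric EFD winning strategy for II} and \ref{definition: metric EFD winning strategy for I}.

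For the nontrivial direction, the key observation I would establish first is a monotonicity property of $r_{0,n}$: since any quantifier-free $\Omega$-formula on variables $x_1, \dots, x_n$ is also a quantifier-free $\Omega$-formula on $x_1, \dots, x_{n+1}$ (not using $x_{n+1}$), for any play extension we have $r_{0,n+1}(Aac, Bbd) \geq r_{0,n}(Aa, Bb)$. Consequently, the winning condition for Player $\mathrm{II}$ -- namely that $r_{0,n}(A(a_1\dots a_n), B(b_1\dots b_n)) < \varepsilon$ at every reachable position -- is a closed condition on plays in the product topology on the (countable-alphabet) space of move sequences, and Player $\mathrm{I}$'s winning condition is accordingly open: $\mathrm{I}$ wins iff some finite initial segment of the play witnesses $r_{0,n} \geq \varepsilon$ (or reaches a terminal position, in which case monotonicity again encodes the event as being open).

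I would then mirror the standard proof of Gale--Stewart for open games. Call a position $(\beta,a,b)$ of the game \emph{$\mathrm{I}$-winning} if Player $\mathrm{I}$ has a winning strategy in the sub-game that starts from it. If the initial position is $\mathrm{I}$-winning we are done. Otherwise, we construct a winning strategy for $\mathrm{II}$ inductively: at any non-$\mathrm{I}$-winning position, for every legal move by $\mathrm{I}$ there must exist a response by $\mathrm{II}$ landing in another non-$\mathrm{I}$-winning position, for otherwise $\mathrm{I}$ could combine their move with the resulting winning strategies and make the original position $\mathrm{I}$-winning. Let $\mathrm{II}$ always play such a response; by construction the play stays forever in non-$\mathrm{I}$-winning positions. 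Finally, by monotonicity, any non-$\mathrm{I}$-winning position $(\beta,a,b)$ with $n$-tuples played must satisfy $r_{0,n}(Aa, Bb) < \varepsilon$: otherwise either $\beta = \min(\lambda, \prec)$ and the third clause in Definition \ref{definition: metric EFD winning strategy for I} is vacuous so $\mathrm{I}$ would already win, or $\mathrm{I}$ can simply pick any smaller clock value and any move, and by monotonicity of $r_{0,n}$ all resulting positions still satisfy $r_{0,n+1} \geq \varepsilon$, directly furnishing $\mathrm{I}$ with a winning strategy and contradicting non-$\mathrm{I}$-winningness. Assembling these responses into the set-theoretic format of Definition \ref{definition: metric EFD winning strategy for II} gives the desired winning strategy for $\mathrm{II}$.

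The main obstacle I anticipate is the bookkeeping around the various termination regimes: the game terminates in finitely many rounds when $\mathrm{I}$ can drive the clock to $\min(\lambda, \prec)$, but may run for $\omega$ rounds if $(\lambda,\prec)$ has no minimum or $\mathrm{I}$ chooses not to descend to one. Both regimes need to be handled uniformly, and the exact matching between the tree-of-plays picture and the set-valued formalism of Definitions \ref{definition: metric EFD winning strategy for II} and \ref{definition: metric EFD winning strategy for I} -- in particular the third clause of the latter, which requires $\mathrm{I}$'s strategy to always eventually witness a violation of $\varepsilon$-closeness -- needs to be verified carefully; once that is in hand, the argument is the classical Gale--Stewart proof applied to the open winning condition identified in the second paragraph.
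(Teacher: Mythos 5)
Your proposal is correct and takes essentially the same approach as the paper: the paper's proof simply observes that $\EFD_{\prec,\varepsilon}(A,B)$ is a closed game (any win for Player I is decided after finitely many rounds) and cites the classical determinacy theorem for such games, which is exactly the open/closed structure you identify via the monotonicity $r_{0,n+1}(Aac,Bbd)\geq r_{0,n}(Aa,Bb)$ before reproving the Gale--Stewart argument by hand. The only difference is one of detail, not of method -- the paper delegates the unraveling and the bookkeeping around the set-valued strategy formalism entirely to the cited classical fact, whereas you carry it out explicitly.
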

\begin{proof}
    This follows from the fact that by definition, $\EFD_{\prec,\varepsilon(A,B)}$ is a {closed} game. That is, even though a gameplay can have infinitely many rounds, if I wins in a gameplay, then this is decided after a finite number rounds. It is a classical fact from descriptive set theory that such games are determined, see for example \cite[Chapter II.20]{MR1321597}.
\end{proof}

For any $\varepsilon>0$, write  
\[Q_\varepsilon:=\{(A,B,\prec)\in\Mod_\omega(L)^2\times\LO \, \colon \, \mathrm{I}\uparrow\EFD_{\prec,\varepsilon}(A,B)\}\]
and
\[R_\varepsilon:=\{(A,B,\prec)\in\Mod_\omega(L)^2\times\LO \, \colon \, \mathrm{II} \uparrow\EFD_{\prec,\varepsilon}(A,B)\}.\]

\begin{lemma}\label{lemma: Winning strategy Borel}
    The set $R_\varepsilon$ is Borel.
\end{lemma}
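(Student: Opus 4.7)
The plan is to show that $R_\varepsilon$ is both analytic and coanalytic, and then invoke Suslin's theorem (Fact \ref{fact: suslin's thm}) to conclude that it is Borel. Determinacy of the games (Lemma \ref{lem: determinacy of metric EFD}) provides the bridge between the two halves: since $R_\varepsilon$ is exactly the complement of $Q_\varepsilon$ in $\Mod_\omega(L)^2\times\LO$, it suffices to prove that each of $R_\varepsilon$ and $Q_\varepsilon$ is analytic.

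First, equip the space of candidate strategies $\mathcal{T}:=\prod_{n\in\mathbb N}\mathcal{P}(\omega^n\times\omega^n\times\omega^n)$ with its natural Polish topology (as a countable product of Cantor spaces), so that $\Mod_\omega(L)^2\times\LO\times\mathcal{T}$ is a standard Borel space. Consider the set
\[W_{\mathrm{II}}:=\{(A,B,\prec,S)\in\Mod_\omega(L)^2\times\LO\times\mathcal{T}\,:\, S\in\mathcal{S}_{\prec,\varepsilon}^{A,B}(\mathrm{II})\},\]
and analogously $W_{\mathrm{I}}$ for Player I. I would verify that $W_{\mathrm{II}}$ is Borel by inspecting Definition \ref{definition: metric EFD winning strategy for II} clause by clause: the quantifiers over $\omega$ and $\omega^n$ are countable, membership in $S_n$ of a given finite tuple is an open (clopen) condition in the $n$-th coordinate of $\mathcal{T}$, the conditions $\beta_{n+1}\prec\beta_n$ and $\beta\in\lambda$ are Borel in $\LO$, and the final constraint $r_{0,n}(Aa,Bb)<\varepsilon$ is Borel jointly in $(A,B)$ by Fact \ref{fact: pseudo bnf distance is Borel}. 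The argument for $W_{\mathrm{I}}$ is essentially identical modulo the last bullet of Definition \ref{definition: metric EFD winning strategy for I}, where again the existential quantifier is over the countable set $\omega$ and the condition $r_{0,n+k}(Aac,Bbd)\geq\varepsilon$ is Borel.

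Next, observe that $R_\varepsilon$ is the projection of $W_{\mathrm{II}}$ onto $\Mod_\omega(L)^2\times\LO$ and $Q_\varepsilon$ is the projection of $W_{\mathrm{I}}$. Since projections of Borel sets along standard Borel factors are analytic, both $R_\varepsilon$ and $Q_\varepsilon$ are analytic. By Lemma \ref{lem: determinacy of metric EFD}, for every $(A,B,\prec)$ exactly one of the two players has a winning strategy, so
\[R_\varepsilon=(\Mod_\omega(L)^2\times\LO)\setminus Q_\varepsilon.\]
Thus $R_\varepsilon$ is coanalytic as well as analytic, and Suslin's theorem (Fact \ref{fact: suslin's thm}) gives that $R_\varepsilon$ is Borel.

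The main technical obstacle I anticipate is the joint Borel measurability in $(A,B,\prec,S)$ of the defining conditions for a winning strategy, since Lemma \ref{lemma: set of winning strategies is Borel} was stated only for fixed $(A,B,\prec)$. Pushing Borelness into the joint variables requires a careful, but routine, reduction to Borelness of the membership relation $\{(T,\sigma)\in\mathcal{P}(\omega^k)\times\omega^k : \sigma\in T\}$ and Borelness of the relation $\prec$ as a subset of $\omega^2\times\LO$. Once this is in place, the determinacy plus Suslin strategy closes the argument without any further subtlety.
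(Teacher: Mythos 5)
Your proposal is correct and follows essentially the same route as the paper: establish joint Borelness of the set of tuples $(A,B,\prec,S)$ with $S$ a winning strategy, project to get analyticity of $R_\varepsilon$ and $Q_\varepsilon$, then combine determinacy with Suslin's theorem. Your explicit flag about upgrading Lemma \ref{lemma: set of winning strategies is Borel} from fixed $(A,B,\prec)$ to joint Borelness is a fair point the paper glosses over, but it is handled exactly as you describe.
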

\begin{proof}
    It follows from Lemma \ref{lemma: set of winning strategies is Borel} that the set
    \[\big\{(A,B,(\lambda,\prec),(S_n)_n) \ \colon \ (S_n)_n\in \mathcal{S}_{\prec,\varepsilon}^{A,B}(\mathrm{II})\big\}\]
    is Borel in $\Mod_\omega(L)^2\times\LO\times\prod_n(\omega^n\times\omega^n\times\omega^n)$. Since $R_\varepsilon$ is just the projection of this set to $\Mod_\omega(L)^2\times\LO$, it follows that $R_\varepsilon$ is analytic.
    By an analogous argument, $Q_\varepsilon$ is analytic.
    Since $\big(\Mod_\omega(L)^2\times\LO\big)\setminus R_\varepsilon=Q_\varepsilon$ by Lemma \ref{lem: determinacy of metric EFD}, we conclude that $R_\varepsilon$ is Borel by Suslin's theorem.
\end{proof}

We conclude with the following key result, amounting to Theorem \ref{theorem: intro uniform Borelness of games}. Note that any countable ordinal $\alpha<\omega_1$ is isomorphic to some $(\lambda,\prec)\in\LO$.

\begin{theorem}\label{theorem:back-and-forth_is_uniformly_borel}
    The set $R:=\bigcap_{\varepsilon>0}R_\varepsilon\subseteq\Mod_\omega(L)^2\times\LO$
    is Borel, and for any ordinal $\alpha<\omega_1$,
    \[A\equiv_\alpha^{bf} B\iff (A,B,\alpha)\in R.\]
\end{theorem}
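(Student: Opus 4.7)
My plan is to prove the Borel-ness claim by first reducing the uncountable intersection $R=\bigcap_{\varepsilon>0}R_\varepsilon$ to a countable one. The family $(R_\varepsilon)_{\varepsilon>0}$ is monotone in $\varepsilon$: if $(S_n)_n\in\mathcal{S}^{A,B}_{\prec,\varepsilon}(\mathrm{II})$ and $\varepsilon'\geq\varepsilon$, then the condition $r_{0,n}(Aa,Bb)<\varepsilon$ appearing in Definition \ref{definition: metric EFD winning strategy for II} implies $r_{0,n}(Aa,Bb)<\varepsilon'$, so the same sequence $(S_n)_n$ is a winning strategy for II in $\EFD_{\prec,\varepsilon'}(A,B)$. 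Hence $R_\varepsilon\subseteq R_{\varepsilon'}$ whenever $\varepsilon\leq\varepsilon'$, and consequently
\[ R=\bigcap_{n\geq 1}R_{1/n}. \]
Each $R_{1/n}$ is Borel by Lemma \ref{lemma: Winning strategy Borel}, so $R$ is a countable intersection of Borel sets and hence Borel.

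For the equivalence $A\equiv_\alpha^{bf}B\iff (A,B,\alpha)\in R$, I would simply unwind the definitions. Fix a code $(\lambda,\prec)\in\LO$ of order-type $\alpha$, which is identified with $\alpha$ as indicated in the remark preceding the theorem; note also that the game $\EFD_{\prec,\varepsilon}(A,B)$ depends only on the order-type of $\prec$ when $\prec$ is a well-order, so the choice of code is inessential. By definition $A\equiv_\alpha^{bf}B$ means $r_{0,\alpha}(A,B)=0$, which is equivalent to $r_{0,\alpha}(A,B)<\varepsilon$ for every $\varepsilon>0$. Applying Proposition \ref{proposition: Games correspond to bnf pseudo-distance} in the case $n=0$ (with empty parameter tuples), this is in turn equivalent to $\mathrm{II}\uparrow\EFD_{\alpha,\varepsilon}(A,B)$ for every $\varepsilon>0$, i.e.\ to $(A,B,\alpha)\in R_\varepsilon$ for every $\varepsilon>0$, which is $(A,B,\alpha)\in R$.

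There is no substantial obstacle; the work has been absorbed into the earlier lemmas. The only point requiring care is the monotonicity observation $R_\varepsilon\subseteq R_{\varepsilon'}$ that allows the collapse of the uncountable intersection to a countable one --- this is what makes the Borel conclusion possible, rather than merely coanalytic.
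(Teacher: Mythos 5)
Your proof is correct and follows essentially the same route as the paper: reduce the uncountable intersection to $\bigcap_n R_{1/n}$ (the paper does this implicitly; you justify it explicitly via the monotonicity $R_\varepsilon\subseteq R_{\varepsilon'}$ for $\varepsilon\leq\varepsilon'$), invoke Lemma \ref{lemma: Winning strategy Borel} for Borelness, and deduce the equivalence from Proposition \ref{proposition: Games correspond to bnf pseudo-distance}. The extra care you take with monotonicity and with the independence of the game from the choice of code for $\alpha$ is a welcome elaboration of what the paper leaves tacit.
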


\begin{proof}
    Taking $\varepsilon$ to range over $\{n^{-1} \colon n<\omega\}$, $R$ is Borel as a countable intersection of Borel sets by Lemma \ref{lemma: Winning strategy Borel}. The equivalence holds by Proposition \ref{proposition: Games correspond to bnf pseudo-distance}.
\end{proof}

\subsection{Refining families of Borel equivalence relations}\label{subsection: barwise}

Consider sets $X$ and $Y$ together with
equivalence relations $E\subseteq X^2$ and $F\subseteq Y^2$. Suppose we are given a map $K \colon X\to Y$ such that $x \ E \ x'$ holds if and only if $K(x) \ F \ K(x')$. Suppose further that $F$ is the intersection of a decreasing sequence $(F_\alpha)_{\alpha<\omega_1}$ of equivalence relations on $Y$. It is natural to ask whether classification already occurs at lower levels of the sequence of equivalence relations, i.e., whether there is some $\beta<\omega_1$ such that $x \ E \ x'$ already holds whenever $K(x) \ F_\beta \ K(x')$. One can ask a similar question in the case where $E$ is the intersection of such a sequence.
In general there is no reason this should be the case, but we show in this section that under suitable descriptive set theoretic assumptions on $X,~Y,~E,~F$ and $K$ such $\beta<\omega_1$ exists.

We use the following classical result by Lusin and Sierpiński (see \cite[Theorem 27.12]{MR1321597}). Denote by $\WO\subseteq\LO$ the set of well-orders on subsets of $\omega$. Recall that $\LO$ denotes the set of linear orders on subsets of $\omega$. Observe that $\LO$ is Borel and $\WO$ is co-analytic.

\begin{theorem}[Lusin-Sierpiński]\label{thm:DST:not_sigma_one_one_well_orders}
    The set $\WO$ is not analytic. 
\end{theorem}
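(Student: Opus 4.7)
The plan is to argue by contradiction using the $\Sigma^1_1$-boundedness principle for well-orderings. Assume $\WO$ is analytic; I will derive a contradiction from the fact that $\WO$ contains representatives of every countable ordinal, while any analytic subset of $\WO$ must have bounded order type.

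As a preliminary, I would verify that $\LO$ is Borel: the conditions defining a linear order (reflexivity or irreflexivity, transitivity, totality on the underlying set $\lambda \subseteq \omega$) are each Borel in $\mathcal{P}(\omega) \times \mathcal{P}(\omega^2)$. Next, I would observe that $\WO$ is coanalytic. Indeed, a pair $(\lambda, \prec) \in \LO$ fails to be well-founded if and only if there exists an infinite sequence $(a_n)_{n<\omega} \in \omega^\omega$ with $a_n \in \lambda$ and $a_{n+1} \prec a_n$ for every $n$; the set of such $((\lambda,\prec), (a_n)_n)$ is Borel, and $\LO \setminus \WO$ is its projection onto the first coordinate, hence analytic.

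The main ingredient is the $\Sigma^1_1$-boundedness theorem: for every analytic set $A \subseteq \WO$, the supremum $\sup\{|{\prec}| : (\lambda,\prec) \in A\}$ of the order types of elements of $A$ is strictly less than $\omega_1$. I would cite this as a standard consequence of the theory of coanalytic ranks; the rank $(\lambda, \prec) \mapsto |{\prec}|$ is a $\Pi^1_1$-rank on $\WO$, and the boundedness theorem for $\Pi^1_1$-ranks forces any analytic set contained in $\WO$ to lie in some sublevel $\{|{\prec}| < \alpha\}$ for a countable $\alpha$. Given the depth of boundedness, I would not reprove it but appeal to the reference \cite[Theorem 35.23]{MR1321597} or its analogues.

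With these tools, the contradiction is immediate. If $\WO$ were itself analytic, boundedness applied with $A = \WO$ would yield a countable ordinal $\alpha$ such that $|{\prec}| < \alpha$ for every $(\lambda, \prec) \in \WO$. However, for every countable ordinal $\beta < \omega_1$ one can construct a well-order on a subset of $\omega$ of order type $\beta$ by transporting the standard order on $\beta$ via any bijection $\beta \to \omega$, so $\WO$ in fact contains elements of every countable order type. Choosing $\beta \geq \alpha$ produces an element of $\WO$ with order type at least $\alpha$, contradicting boundedness. The only nontrivial obstacle is the boundedness principle, which I would cite directly rather than prove from scratch.
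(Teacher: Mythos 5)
The paper itself offers no proof of this statement; it is quoted as a classical fact with a pointer to \cite[Theorem 27.12]{MR1321597}, where the standard argument runs through the $\Pi^1_1$-completeness of $\WO$ (every $\Pi^1_1$ set Borel-reduces to $\WO$ via the tree representation of analytic sets, and the hierarchy theorem supplies a $\Pi^1_1$ set that is not $\Sigma^1_1$). Your route through $\Sigma^1_1$-boundedness is a correct and equally standard alternative: the verification that $\LO$ is Borel and $\WO$ coanalytic is right, $\WO$ does contain orders of every countable type, and boundedness applied to $A=\WO$ then gives the contradiction immediately. The one point to be careful about is the direction of logical dependency in your citation. In several presentations, including the short proof of the boundedness theorem for $\Pi^1_1$-ranks that you reference, unboundedness of an analytic $A\subseteq\WO$ is refuted by observing that otherwise $\WO=\{y\in\LO\colon y\text{ order-embeds into some }x\in A\}$ would be $\Sigma^1_1$ --- that is, by invoking precisely the non-analyticity of $\WO$ you are trying to prove. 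To keep the argument non-circular you should take boundedness from a proof that is independent of the target statement, for instance as a corollary of the Kunen--Martin theorem on ranks of $\Sigma^1_1$ well-founded relations \cite[Theorem 31.1]{MR1321597}: given analytic $A\subseteq\WO$, the relation on pairs $(x,n)$ with $x\in A$ and $n$ in the field of $x$, ordered by $(x,n)\prec(x,m)$ iff $n\prec_x m$, is $\Sigma^1_1$ and well-founded with rank at least $\sup_{x\in A}|x|$, and Kunen--Martin bounds this below $\omega_1$ by a direct tree argument. With the citation adjusted in this way your proof is complete; what the completeness route of \cite[Theorem 27.12]{MR1321597} buys instead is the stronger conclusion that $\WO$ is $\Pi^1_1$-hard, whereas your argument yields exactly the non-analyticity needed for Lemma \ref{lemma:DST:Barwise_compactness}.
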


By Suslin's theorem, non-analyticity of $\WO$ is a consequence of non-Borelness of $\WO$, or equivalently (by the Lopez-Escobar theorem) non-definability of $\WO$ in $L_{\omega_1, \omega}$ whenever $L=\{<,\dots\}$. This can be seen as a rather weak fraction of compactness that is preserved in infinitary logic: If $L=\{<,\dots\}$ and $\sigma\in L_{\omega_1, \omega}$ is a sentence, if $\sigma$ has models of order-type $\alpha$ for all $\alpha<\omega_1$, then $\sigma$ has a model of the order-type of $(\mathbb Q,<)$.

\begin{definition}
    Call a family $R\subseteq X\times \mathcal{I}$ indexed by $\mathcal{I}\subseteq \LO$ \textit{refining}, if $R_I\supseteq R_{I'}$ whenever $I, I'\in \mathcal{I}$ are such that $I$ embeds into $I'$.
\end{definition}

Denote by $\omega^*$ the linear order obtained by inverting the usual order on $\omega$. Denote by $X$ and $Y$ analytic subsets of respective standard Borel spaces.
As a consequence of Lusin-Sierpiński, we obtain the following, 

\begin{lemma}\label{lemma:DST:Barwise_compactness}
    Let $R\subseteq X\times\LO$ be analytic and refining. If $R_{\alpha}\neq\emptyset$ for every $\alpha<\omega_1$, then $R_{\omega^*}\neq\emptyset$. 
\end{lemma}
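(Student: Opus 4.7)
The plan is to derive this from the Lusin--Sierpiński theorem (Theorem~\ref{thm:DST:not_sigma_one_one_well_orders}) by showing that, under the contrapositive assumption $R_{\omega^*}=\emptyset$, the projection of $R$ to the $\LO$-coordinate coincides exactly with $\WO$, contradicting that $\WO$ is not analytic.

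Concretely, let $S \subseteq \LO$ be the projection of $R$ onto its second coordinate, i.e.\ $S = \{I \in \LO : R_I \neq \emptyset\}$. Since $R$ is analytic and projections of analytic sets are analytic, $S$ is analytic. Suppose for contradiction that $R_{\omega^*} = \emptyset$. I would then argue the two inclusions $\WO \subseteq S \subseteq \WO$ as follows.

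For $\WO \subseteq S$: any $I \in \WO$ is isomorphic, as a linear order, to some countable ordinal $\alpha < \omega_1$. In particular, $I$ and $\alpha$ embed into each other, so by the refining property applied in both directions, $R_I = R_\alpha$, which is nonempty by hypothesis. For $S \subseteq \WO$: a linear order $I \in \LO$ fails to be a well-order precisely when it contains an infinite strictly descending chain, which is the same as saying that $\omega^*$ order-embeds into $I$. If such an embedding exists, refining gives $R_I \subseteq R_{\omega^*} = \emptyset$, so $I \notin S$. Contrapositively, $I \in S$ forces $I \in \WO$.

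Combining the two inclusions yields $S = \WO$. But $S$ is analytic while $\WO$ is not (Theorem~\ref{thm:DST:not_sigma_one_one_well_orders}), a contradiction. Hence $R_{\omega^*} \neq \emptyset$.

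The only step requiring a moment of care is the identification of non-well-orders with linear orders into which $\omega^*$ embeds, and the correct direction of the refining property, namely that $I \hookrightarrow I'$ gives $R_I \supseteq R_{I'}$ (so embedding $\omega^*$ into $I$ makes $R_I$ \emph{smaller}, not larger). Once these are in place, the argument is essentially a one-line application of Lusin--Sierpiński, and I do not anticipate a genuine obstacle.
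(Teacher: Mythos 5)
Your proposal is correct and follows essentially the same route as the paper: both consider the analytic set $S=\{I\in\LO\colon R_I\neq\emptyset\}$, show under the contrapositive assumption that it equals $\WO$, and contradict Lusin--Sierpi\'nski. Your write-up is just slightly more explicit about identifying well-orders with ordinals via mutual embeddability and about the direction of the refining property.
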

\begin{proof}
    Assume otherwise. Consider the set
    \[ \mathcal{I}:=\{I\in\LO \colon R_I\neq\emptyset\}=\{ I \in \LO \colon (\exists x \in X)( x \in R_{I} ) \}. \]
    By assumption this set contains all well-orders on $\omega$ and it does not contain $\omega^*$. Since any non-well-order contains a copy of $\omega^*$ and $R$ is refining, we conclude that $R_I=\emptyset$ for all non-well-orders $I$. Thus, $\mathcal{I}=\WO$. Moreover, $\mathcal{I}$ is analytic as a projection of the analytic set $R$, contradicting Theorem \ref{thm:DST:not_sigma_one_one_well_orders}.
\end{proof}

\begin{definition}\label{definition:uniformly_borel_equivalence}
    A refining family $E=(E_\alpha)_{\alpha<\omega_1}\in\mathcal{P}( X^2)\times \omega_1$ is called a \textit{refining Borel (analytic) equivalence relation on $X$} if $E_\alpha\subseteq X^2$ is a Borel (analytic) equivalence relation for each $\alpha<\omega_1$. $E$ is called \textit{uniformly Borel (analytic)} if there is a Borel (analytic) refining ${R}\subseteq X^2\times \LO$ with $E_\alpha= R_\alpha$ for each $\alpha<\omega_1$ and $R_{\omega^*}= \bigcap_{\alpha<\omega_1} E_\alpha$. In this case, we say $R$ induces $E$.
\end{definition}

For a refining family $(E_\alpha)_{\alpha<\omega_1}$, we denote $E_\infty:=\bigcap_\alpha E_\alpha$. 

\begin{example}\label{example:bnf_uniformly_Borel}
    Let $L$ be some countable metric language and consider $\equiv_\alpha^{bf}\subseteq \Mod_\omega(L)^2$ as in Definition \ref{definition:back-and-forth-distance} (with a universal weak modulus that is fixed implicitly). Then $(\equiv_\alpha^{bf})_{\alpha<\omega_1}$ is refining, and it is uniformly Borel by Theorem \ref{theorem:back-and-forth_is_uniformly_borel}. 
\end{example}

\begin{theorem} \label{thm: DST_main}
    Let $K \colon X\to Y$ be an analytic map, let $E$ be a Borel equivalence relation on $X$, and let $(F_\alpha)_{\alpha<\omega_1}$ be a uniformly analytic refining equivalence relation on $Y$. If $x \ E \  x'$ holds whenever $K(x) \ F_\infty \ K(x')$, then there exists $\beta<\omega_1$ such that $x \ E \ x'$ holds whenever $K(x) \ F_{\beta} \ K(x')$.
\end{theorem}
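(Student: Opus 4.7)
The plan is to apply Lemma \ref{lemma:DST:Barwise_compactness} to a suitable analytic refining family built from $K$, $E$, and the uniformly analytic structure witnessing $(F_\alpha)_{\alpha<\omega_1}$. First I would unpack the hypotheses: by Definition \ref{definition:uniformly_borel_equivalence}, there is an analytic refining $R^F\subseteq Y^2\times\LO$ with $R^F_\alpha=F_\alpha$ for each $\alpha<\omega_1$ and $R^F_{\omega^*}=F_\infty$. Since $K$ is an analytic map between standard Borel spaces its graph is analytic, and by the consequence of Suslin's theorem noted in Section \ref{subsection: DST preliminaries}, $K$ is Borel. Hence the map $(x,x',\prec)\mapsto(K(x),K(x'),\prec)$ is Borel.

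Next I would introduce the witness set
\[
S:=\bigl\{(x,x',\prec)\in X^2\times\LO : (K(x),K(x'),\prec)\in R^F \text{ and } (x,x')\notin E\bigr\}.
\]
This is analytic as the intersection of the Borel preimage of $R^F$ (an analytic set) with the Borel set $X^2\setminus E$, intersected with the analytic set $X^2\times \LO$. It is refining: if $\prec$ embeds into $\prec'$, then $R^F_{\prec'}\subseteq R^F_{\prec}$ (by the refining property of $R^F$), so $S_{\prec'}\subseteq S_{\prec}$.

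Now I would argue by contradiction. Suppose no $\beta<\omega_1$ has the property claimed by the theorem. Then for every countable ordinal $\alpha$, viewed as a well-order on (a subset of) $\omega$, there exist $x,x'\in X$ with $K(x)\,F_\alpha\,K(x')$ yet $(x,x')\notin E$; equivalently $S_\alpha\neq\emptyset$ for every $\alpha<\omega_1$. By Lemma \ref{lemma:DST:Barwise_compactness} applied to the analytic refining family $S$, we conclude $S_{\omega^*}\neq\emptyset$. Picking $(x,x',\omega^*)\in S$ and using $R^F_{\omega^*}=F_\infty$ gives $K(x)\,F_\infty\,K(x')$ while $(x,x')\notin E$, contradicting the hypothesis that $K(x)\,F_\infty\,K(x')\Rightarrow x\,E\,x'$.

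I expect no serious obstacle; the only thing to be careful about is the bookkeeping for refinement and the fact that analyticity of $S$ uses that $E$ is Borel (so that $X^2\setminus E$ is Borel, and thus analytic). The strength of the argument lies entirely in Lemma \ref{lemma:DST:Barwise_compactness}, i.e., in the Lusin--Sierpi\'nski non-analyticity of $\WO$: without it, there would be no reason that failure of a uniform bound at all countable ordinals would force a witness at the non-well-order $\omega^*$.
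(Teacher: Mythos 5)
Your proof is correct and follows essentially the same route as the paper's: both arguments encode the hypothetical counterexamples into an analytic refining family over $\LO$ and invoke Lemma \ref{lemma:DST:Barwise_compactness} to produce a witness at $\omega^*$, contradicting the $F_\infty$ hypothesis. The only (cosmetic) difference is that your witness set $S$ lives over $X^2\times\LO$ as a preimage, whereas the paper's $R'$ lives over $Y^2\times\LO$ as a projection; both are analytic for the same reasons.
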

\begin{proof}
    We argue towards a contradiction by assuming that for every $\beta<\omega_1$ there are $x, x' \in X$ such that $\neg (x \ E \ x')$ and $K(x) \ F_\beta \ K(x')$. By assumption, there is an analytic, refining $R\subseteq Y^2\times\LO$ inducing $F$.
    Let
    \[R':=\Big\{(y,y',I)\in R \colon \exists x,x'\in X \big(K(x)=y,~ K(x')=y',~ \neg (x \ E \ x')\big)\Big\}.\]
   Then $R'$ is analytic as a projection of an analytic set, since $R$, $K$, and $\neg E$ are analytic. Moreover, $R'$ is refining and $R'_\beta\neq \emptyset$ for each $\beta<\omega_1$, by assumption and the fact that $F_\beta= R_\beta$. By Lemma \ref{lemma:DST:Barwise_compactness}, $R'_{\omega^*}$ is non-empty. Thus, there are $x, x' \in X$ such that $\neg x \ E \ x'$ and $(Kx, Kx')\in R_{\omega^*}$. However, $R_{\omega^*} = F_{\infty}$, so $K(x) \ F_{\infty} \ K(x')$ and thus $x \ E \ x'$ by assumption.
\end{proof}

Note that for the above proof, it would have been sufficient to assume co-analyticity of $E$ in place of Borelness. Moreover, it would be enough to have $F_\beta\subseteq R_\beta$ for each $\beta<\omega_1$ and $R_{\omega^*}\subseteq F_\infty$.
A converse of Theorem \ref{thm: DST_main} can be proved analogously, assuming uniform analyticity of the sequence of equivalence relations $(E_\alpha)_{\alpha<\omega_1}$, and that $K$ maps $E_\infty$-equivalent pairs to $F$-equivalent pairs.

\begin{theorem} \label{thm: DST_main_2}
    Let $K \colon X\to Y$ be an analytic map, let $(E_\alpha)_{\alpha<\omega_1}$ be a uniformly analytic refining equivalence relation on $X$, and $F$ a Borel equivalence relation on $Y$. If $K(x) \ F \ K(x')$ holds whenever $x \ E_\infty \ x'$, then there exists $\beta<\omega_1$ such that $K(x) \ F \ K(x')$ holds whenever $x \ E_{\beta} \ x'$.
\end{theorem}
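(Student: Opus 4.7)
The plan is to mirror the proof of Theorem \ref{thm: DST_main} almost verbatim, swapping the roles of $E$ and $F$. I would argue by contradiction: suppose for every $\beta<\omega_1$ there are $x,x'\in X$ with $x \ E_\beta \ x'$ but $\neg K(x) \ F \ K(x')$. Since $(E_\alpha)_{\alpha<\omega_1}$ is uniformly analytic, fix an analytic refining $R\subseteq X^2\times\LO$ with $E_\alpha=R_\alpha$ for every $\alpha<\omega_1$ and $R_{\omega^*}=E_\infty$.

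The key step is to form the witness set
\[R':=\Big\{(x,x',I)\in R \colon \neg K(x) \ F \ K(x')\Big\}.\]
I expect this to be the main content: one needs $R'$ to be both analytic and refining so that Lemma \ref{lemma:DST:Barwise_compactness} applies. Analyticity follows because $R$ is analytic by choice, while $\{(x,x')\in X^2 \colon \neg K(x) \ F \ K(x')\}$ is analytic as the preimage of the Borel set $Y^2\setminus F$ under the analytic map $(K,K)$ (using that preimages of Borel sets under analytic maps are analytic, equivalently, by composing with Borel projections from the graph of $K$). Refinement of $R'$ is immediate from the refinement of $R$, since the $\neg K(x) F K(x')$ condition does not involve the order parameter.

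By the contradiction hypothesis, $R'_\beta\neq\emptyset$ for every $\beta<\omega_1$. Lemma \ref{lemma:DST:Barwise_compactness} then yields a triple $(x,x',\omega^*)\in R'$. But then $(x,x')\in R_{\omega^*}=E_\infty$, so by assumption $K(x) \ F \ K(x')$, contradicting $(x,x',\omega^*)\in R'$.

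The only subtlety compared to Theorem \ref{thm: DST_main} is that Borelness (rather than analyticity) is now needed on $F$ rather than on $E$: we used that $Y^2\setminus F$ is Borel in order to conclude that $R'$ is analytic. As the author notes in the remark after Theorem \ref{thm: DST_main}, it would actually suffice to assume $F$ is co-analytic, and one could weaken the hypothesis to $E_\beta\subseteq R_\beta$ and $R_{\omega^*}\subseteq E_\infty$; the proof above goes through unchanged under these weaker hypotheses.
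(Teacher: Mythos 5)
Your proof is correct and is exactly the "analogous" argument the paper has in mind (the paper only remarks that Theorem \ref{thm: DST_main_2} is proved by mirroring Theorem \ref{thm: DST_main}, which is what you do): the witness set $R'$ is analytic as the intersection of $R$ with the analytic set $\{(x,x')\colon \neg K(x)\,F\,K(x')\}\times\LO$, it is refining, and Lemma \ref{lemma:DST:Barwise_compactness} delivers the contradiction via $R_{\omega^*}=E_\infty$. Your closing observations about co-analyticity of $F$ sufficing and the weaker hypotheses $E_\beta\subseteq R_\beta$, $R_{\omega^*}\subseteq E_\infty$ also match the paper's remarks.
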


Again, note that it is sufficient to assume co-analyticity of $F$, in place of Borelness.

We turn now to the case where $X$ and $Y$ are spaces of separable metric structures. Let $L_1$ and $L_2$ be metric languages. Recall that $\equiv_\alpha$ denotes elementary equivalence up to quantifier-rank $\alpha$ and $\equiv_\alpha^{\text{bf}}$ is back-and-forth equivalence in the sense of Definition \ref{definition:back-and-forth-distance} with respect to a fixed universal weak modulus in the respective language. We may write ${\equiv}^{L_i}_\alpha$ and ${\equiv}_\alpha^{L_i,\text{bf}}$ to highlight the dependence on the ambient language. As an immediate application of the above theorem and Example \ref{example:bnf_uniformly_Borel}, we obtain the following, amounting to Corollary \ref{corollary:intro comparison of Scott degrees}. In the following statement (and its proof) we identify $A \in \Mod_\omega(L)$ with the separable $L$-structure $\widehat{A}$ it encodes.

\begin{corollary} \label{cor: infinitary classification}
    Let $X\subseteq\Mod_\omega(L_1)$ and $Y\subseteq\Mod_\omega(L_2)$ be analytic and let $K \colon X\to Y$ be an analytic map such that 
    \[ \forall A,B\in X\colon \ K(A) \cong K(B) \iff A \cong B. \]
    Then there exist maps $\theta\colon\omega_1\to\omega_1, \, \theta'\colon\omega_1\to\omega_1$ such that
    \begin{equation*}
        \forall A,B\in X\ \forall\alpha<\omega_1 \colon
        \begin{cases}
            K(A) \equiv_{\theta(\alpha)} K(B)\implies A\equiv_\alpha B, \\
            A\equiv_{\theta'(\alpha)}B\implies K(A)\equiv_\alpha K(B).
        \end{cases}
    \end{equation*}
\end{corollary}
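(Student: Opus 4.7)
The plan is to apply Theorem \ref{thm: DST_main} and its counterpart Theorem \ref{thm: DST_main_2}, with the uniformly Borel refining family $(\equiv_\beta^{\text{bf}})_{\beta<\omega_1}$ from Example \ref{example:bnf_uniformly_Borel} playing the role of $F_\beta$ (respectively $E_\beta$), and then bridge from back-and-forth equivalence to Scott equivalence via the inclusion $\equiv_\alpha\subseteq\equiv_\alpha^{\text{bf}}$ noted just before Fact \ref{fact: bnf pseudo-metric properties}. That inclusion is crucial in the right direction: any hypothesis of the form ``$\equiv_\beta$'' can be strengthened freely to ``$\equiv_\beta^{\text{bf}}$'', which is exactly what the two implications of the corollary require.

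Fixing $\alpha<\omega_1$, for the first implication I would apply Theorem \ref{thm: DST_main} (in the coanalytic-$E$ variant noted after its statement) with $E:={\equiv}_\alpha^{L_1}|_{X}$ and $F_\beta:={\equiv}_\beta^{L_2,\text{bf}}|_{Y}$. The standing hypothesis $K(A)\,F_\infty\,K(B)\Rightarrow A\,E\,B$ reduces to $K(A)\cong K(B)\Rightarrow A\equiv_\alpha^{L_1}B$, which is immediate from the assumed biconditional $K(A)\cong K(B)\iff A\cong B$ together with $A\cong B\Rightarrow A\equiv_\alpha^{L_1}B$. The theorem then produces $\theta(\alpha)<\omega_1$ with $K(A)\equiv_{\theta(\alpha)}^{L_2,\text{bf}}K(B)\Rightarrow A\equiv_\alpha^{L_1}B$, and the bridge $\equiv_{\theta(\alpha)}^{L_2}\subseteq\equiv_{\theta(\alpha)}^{L_2,\text{bf}}$ upgrades this to the desired $\equiv_{\theta(\alpha)}^{L_2}$-hypothesis. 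The second implication is handled completely symmetrically by applying Theorem \ref{thm: DST_main_2} with $(E_\beta):=({\equiv}_\beta^{L_1,\text{bf}})|_X$ and $F:={\equiv}_\alpha^{L_2}|_Y$, whose standing hypothesis again reduces to $A\cong B\Rightarrow K(A)\equiv_\alpha^{L_2}K(B)$, producing $\theta'(\alpha)$ and finishing by the same strengthening $\equiv_{\theta'(\alpha)}^{L_1}\subseteq\equiv_{\theta'(\alpha)}^{L_1,\text{bf}}$.

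The one technical point that I expect to require genuine care is verifying that the Scott equivalence $\equiv_\alpha^{L_i}$ is coanalytic on $\Mod_\omega(L_i)$ for each $\alpha<\omega_1$ and $i\in\{1,2\}$: this is what permits it to appear as $E$ or $F$, in view of the remarks after Theorems \ref{thm: DST_main} and \ref{thm: DST_main_2} relaxing Borelness of that side to coanalyticity. The plan for this step is to code $L_{\omega_1,\omega}$-sentences of quantifier rank $\leq\alpha$ as a Borel subset of a standard Borel space, by induction on formula structure in parallel with the construction behind Lemma \ref{lem: set of separable models is Borel}, and to verify that the evaluation map $(A,\sigma)\mapsto\sigma^A$ is Borel in both arguments; then $\neg(A\equiv_\alpha^{L_i}B)$ is analytic as a projection over a Borel set of witnessing sentences, giving coanalyticity of $\equiv_\alpha^{L_i}$. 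Once this coanalyticity is in hand, the rest of the argument is a clean assembly of the transfer theorems of Section \ref{subsection: barwise} with the bridge $\equiv\subseteq\equiv^{\text{bf}}$.
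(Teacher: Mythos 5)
Your proposal is correct and follows essentially the same route as the paper: apply Theorem \ref{thm: DST_main} with $E={\equiv}_\alpha^{L_1}$ and $F_\beta={\equiv}_\beta^{L_2,\mathrm{bf}}$, apply Theorem \ref{thm: DST_main_2} symmetrically, and use the bridge $\equiv_\beta\,\subseteq\,\equiv_\beta^{\mathrm{bf}}$ from Fact \ref{fact: bnf pseudo-metric properties} to pass from the back-and-forth relations in the hypotheses to the Scott relations. The only difference is that you explicitly flag and sketch the coanalyticity of $\equiv_\alpha^{L_i}$ needed to invoke the coanalytic variants of those theorems, a point the paper's proof leaves implicit.
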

\begin{proof}
    Note that for $i\in\{0,1\}$, both $({\equiv}^{L_i}_\alpha)_{\alpha<\omega_1}$ and $\equiv_\alpha^{L_i,bf}$ are refining sequences of equivalence relations, and $\bigcap_{\alpha}{\equiv}^{L_i}_\alpha$ and $\bigcap_\alpha \equiv_\alpha^{L_i,bf}$ coincide with $\cong$. Moreover, $(\equiv_\alpha^{L_2,bf})_{\alpha<\omega_1}$ is uniformly Borel by Theorem \ref{theorem:back-and-forth_is_uniformly_borel}. Applying Theorem \ref{thm: DST_main} with $E=E_\alpha$ and Theorem \ref{thm: DST_main_2} with $F=F_\alpha$, for each $\alpha<\omega_1$, yields functions $\theta \colon \omega_1\to\omega_1$ and $\theta' \colon \omega_1\to\omega_1$ such that for any $A,B\in\Mod_\omega(L_1)$ and $\alpha<\omega_1$,
    \[K(A)\equiv_{\theta(\alpha)}^{L_2,bf}K(B)\implies A\equiv^{L_1}_\alpha B\]
    and
    \[A\equiv_{\theta'(\alpha)}^{L_1,bf}B\implies K(A)\equiv_\alpha K(B)\]
    Moreover, $\equiv_\beta^{L_i}$ refines $\equiv_\beta^{L_i,bf}$, for each $\beta<\omega_1$ by Fact \ref{fact: bnf pseudo-metric properties}. The statement follows.
\end{proof}

\subsection{Borel category of $L$-structures}\label{subsection: Borel categories}

We introduce now the notion of a \emph{Borel category}, and construct a \emph{Borel model} for the category of separable $C^*$-algebras. 
More generally, we construct a Borel model of the category of separable models of any given theory in a countable metric language. 
If one considers only surjective isometries between models, such Borel groupoids were already considered in \cite{ElliottEtAl2013, MR3660238} (using the Urysohn space) and in \cite{yaacov2017metric}. Characterizations of Borel functors between Borel groupoids of models were given in \cite{MR3893282, MR4029718}. Let us note that the study of Borel structures (see \cite{steinhorn1985chapter, MR2830412} for the definition) was initiated by Friedman \cite{friedman1979borel} and there are many results knows for some `degenerated' Borel categories, like orderings or equivalence relations \cite{MR965754, MR1057041}.

\begin{definition} \label{def: Borel category}
    A Borel category $\mathbf{X}$ is a pair $(X_{\Ob},X_{\Mor})$ of standard Borel spaces together with Borel source and target maps $s,t \colon X_{\Mor} \to X_{\Ob}$, a Borel identity map $i \colon X_{\Ob} \to X_{\Mor}$, and a Borel composition map
    \[ \circ_X \colon X_{\Mor} \times_{t, X_{\Ob},s} X_{\Mor} \to X_{\Mor}, \]
    that form a category, where $X_{\Mor}(A,B) := \{ f \in X_{\Mor} \colon s(f)=A, t(f)=B \}$ is the set of morphisms between $A,B \in X_{\Ob}$. 
\end{definition}

In other slightly informal words, a Borel category is a 1-category object in the category of standard Borel spaces. When the ambient category $\mathbf{X}$ is clear from the context, we denote composition by $\circ$ instead of $\circ_{{X}}$.

From here onward, boldface letters $\mathbf{X},\mathbf{Y},\mathbf{Z}$ will denote Borel categories and curly letters $\mathcal{C},\mathcal{D},\mathcal{X},\mathcal{Y}$ will denote categories.

\begin{definition}
    A Borel functor $F \colon \mathbf{X}\to\mathbf{Y}$ is a pair of Borel maps $(F_{\Ob},F_{\Mor})$ where $F_{\Ob} \colon X_{\Ob} \to Y_{\Ob}$, $F_{\Mor} \colon X_{\Mor} \to Y_{\Mor}$ such that $F$ defines a functor between the underlying categories.

    For Borel functors $F,G \colon \mathbf{X}\to\mathbf{Y}$, a Borel natural transformation $\alpha \colon F\to G$ is a Borel map $\alpha \colon X_{\Ob} \to Y_{\Mor}$ such that for any $A\in X_{\Ob}$, $\alpha(A)\in X_{\Mor}(F_{\Ob}(A),G_{\Ob}(A))$, and for any $f\in X_{\Mor}(A,B)$,
    \[ G_{\Mor}(f) \circ_{Y} \alpha(A) = \alpha(B) \circ_{Y} F_{\Mor}(f).\]
\end{definition}

It is clear that one could also consider higher Borel categories, however in this paper we only need the language of 1-categories.

\begin{definition}
    Let $\mathcal{C}$ be a category. A Borel category $\mathbf{X}$ is a Borel model of $\mathcal{C}$ if it is equivalent to $\mathcal{C}$ as a category. 
\end{definition}

The following is a standard fact from category theory.

\begin{fact}\label{fact: equivalences of categories}
    $\mathcal{C}$ and $\mathcal{D}$ are equivalent if and only if there is a functor $F \colon \mathcal{C}\to\mathcal{D}$ that is essentially surjective and fully faithful. 
\end{fact}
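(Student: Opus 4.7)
The plan is to prove both directions of the standard equivalence, using the axiom of choice (as is typical for this result).

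For the forward direction, I would start with an equivalence of categories, i.e., functors $F \colon \mathcal{C}\to\mathcal{D}$ and $G \colon \mathcal{D}\to\mathcal{C}$ together with natural isomorphisms $\eta \colon \id_\mathcal{C}\Rightarrow GF$ and $\varepsilon \colon FG\Rightarrow\id_\mathcal{D}$. Essential surjectivity of $F$ is immediate: for each $D\in\mathcal{D}$, the component $\varepsilon_D \colon F(G(D))\to D$ is an isomorphism, so $D$ lies in the essential image of $F$. For full faithfulness, given $A,B\in\mathcal{C}$ I would show the map $F_{A,B}\colon\Hom_\mathcal{C}(A,B)\to\Hom_\mathcal{D}(F(A),F(B))$ is a bijection by exhibiting an explicit two-sided inverse, namely $f\mapsto \eta_B^{-1}\circ G(f)\circ\eta_A$, and verifying via naturality of $\eta$ that this is indeed inverse to $F_{A,B}$.

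For the reverse direction, assume $F$ is fully faithful and essentially surjective. Using the axiom of choice, for every $D\in\mathcal{D}$ I would pick an object $G(D)\in\mathcal{C}$ together with an isomorphism $\varepsilon_D \colon F(G(D))\to D$. To extend $G$ to morphisms, given $f \colon D\to D'$ in $\mathcal{D}$, the composite $\varepsilon_{D'}^{-1}\circ f\circ\varepsilon_D$ lies in $\Hom_\mathcal{D}(F(G(D)),F(G(D')))$, so by full faithfulness there is a unique $G(f)\in\Hom_\mathcal{C}(G(D),G(D'))$ with $F(G(f))=\varepsilon_{D'}^{-1}\circ f\circ\varepsilon_D$. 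Uniqueness makes $G$ a functor automatically, and the defining equation shows $\varepsilon$ is a natural isomorphism $FG\Rightarrow\id_\mathcal{D}$.

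To complete the other natural isomorphism $\eta \colon \id_\mathcal{C}\Rightarrow GF$, I would note that for each $A\in\mathcal{C}$, $\varepsilon_{F(A)}\colon F(G(F(A)))\to F(A)$ is an isomorphism, and by fully faithfulness it comes from a unique isomorphism $\eta_A \colon A\to G(F(A))$ with $F(\eta_A)=\varepsilon_{F(A)}^{-1}$. Naturality of $\eta$ then follows by applying fully faithfulness to a straightforward diagram chase using naturality of $\varepsilon$.

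The argument is routine but relies essentially on the axiom of choice in selecting $G(D)$ and $\varepsilon_D$. There is no real obstacle; the only point to be careful about is verifying naturality of $\eta$, which follows cleanly from the uniqueness clause of full faithfulness.
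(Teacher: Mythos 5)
The paper states this as a standard fact from category theory and gives no proof, and your argument is the standard textbook one (choice of $G(D)$ and $\varepsilon_D$, full faithfulness to define $G$ on morphisms, uniqueness giving functoriality and naturality), so it is correct and there is nothing in the paper to compare it against. One small point worth surfacing in the forward direction: naturality of $\eta$ alone shows that $f\mapsto\eta_B^{-1}\circ G(f)\circ\eta_A$ is a \emph{left} inverse of $F_{A,B}$ (hence $F$ faithful); to see it is also a right inverse you need $GF(g)=G(f)$ to imply $F(g)=f$, which uses faithfulness of $G$, obtained by running the same argument symmetrically with $\varepsilon$.
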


Fix some metric language $L$. We aim now to find a Borel model of the category of separable $L$-structures, with \emph{homomorphisms} in the sense of the following definition as morphisms.

\begin{definition}\label{definition: homomorphism}
    Let $M$ and $N$ be $L$-structures. A homomorphism $M\to N$ is a map $\psi \colon M\to N$ such that $\psi$ commutes with interpretations of function and constant symbols in $L$
    and for any relation symbol $R(x)$ in $L$ and $a\in M^x$, 
    \begin{equation}\label{eq: respect relation symbols}
        |R^{N}(\psi(a))|\leq |R^{M}(a)|.
    \end{equation} 

    For an $L_{\omega_1,\omega}$-theory $T$, write $\mathcal{C}_T$ for the category of separable models of $T$, with homomorphisms as morphisms.
\end{definition}

Note that in particular all homomorphisms are contractive, by (\ref{eq: respect relation symbols}) with $R$ being the distinguished symbol $d$.

\begin{remark}
    The definition of homomorphism above is consistent with that in discrete logic, where a homomorphism is one that commutes with function- and constant symbols, and where 
    \[a\in R^{M}\implies \psi(a)\in R^N,\]
    which coincides with (\ref{eq: respect relation symbols}) if we interpret $R^M$ as a $\{0,1\}$-valued map with $R(a)=0$ if and only if $a\in R$.
\end{remark}

We fix a countable collection of $L_{\omega_1,\omega}$-sentences $\theory$ for some metric language $L$.
By Lemma \ref{lem: set of separable models is Borel} and the fact that $T$ is countable, the set $\Mod_\omega(\theory) \subset \Mod_\omega(L)$ is Borel.
The goal now is to construct a Borel model $\mathbf{X}^{\theory}$ of $\mathcal{C}_{\theory}$  with $X^{\theory}_{\Ob}:=\Mod_\omega(\theory)$, i.e. to find a functor $F \colon \mathbf{X}^{\theory} \to\mathcal{C}_{\theory}$ witnessing equivalence of categories. We use the convention that if we talk about functors from Borel categories to usual categories we mean functors from the category underlying the Borel one.
For $A\in X_{\Ob}$, we will denote by $\widehat{A}$ the separable $L$-structure that $A$ encodes, i.e., $\widehat{A}$ is the set of equivalence classes of Cauchy sequences in $A$, with functions and relations extended continuously (so until the end of this section we do \textbf{not} identify $A$ with $\widehat{A}$).
At the level of objects, it is clear that $F$ will coincide with $\widehat{\cdot}$, which we have already seen to be essentially surjective (i.e., every separable model can be encoded in $\Mod_\omega(\theory)$). 
We are left with finding a suitable space of morphisms in $\mathbf{X}^{\theory}$. For $A,B\in \Mod_\omega(\theory)$, we want to encode homomorphisms $\widehat{f} \colon \widehat{A}\to\widehat{B}$ in a way that only depends on $A$ and $B$. We do this in the natural way, using Cauchy sequences.

For notational convenience, we write $\bigwedge, \bigvee$ for countable conjunction and disjunction respectively; when we quantify in this way over $\varepsilon>0$ (or $\delta>0$, etc.) we actually mean quantification over $\varepsilon = 1/k$ for natural numbers $k$. Recall that the universe of each $A\in X^{\theory}_{\Ob}$ is $\omega$ so the expressions $a\in A$ and $a\in\omega$ are synonymous (but the former indicates how one thinks of $a$ as an element of the structure $A$). We write below $f$ for a sequence $(f_n)_{n\in\omega}\in (\omega^\omega)^\omega$.

\begin{lemma}\label{lemma: Cauchy sequences X_1 Borel}
    Let $X_1 \subset X^{\theory}_{\Ob} \times X^{\theory}_{\Ob} \times (\omega^\omega)^\omega$ be the set of triples $(A,B,f)$ satisfying the following conditions.
    \begin{itemize}
        \item[(i)] For every $a \in A$, the sequence $f_n(a)$ is Cauchy with respect to $d_B$;
        \item[(ii)] There is a (unique) homomorphism $\widehat{f} \colon \widehat{A} \to \widehat{B}$ such that $\widehat{f}(a)=\lim_n f_n(a)\in \widehat{B}$ for each $a\in A$.
    \end{itemize}
    Then $X_1$ is a Borel subset of the ambient product. 
\end{lemma}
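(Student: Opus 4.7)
The plan is to decompose the conjunction of (i) and (ii) into a countable family of Borel conditions on $(A,B,f)$ so that $X_1$ becomes a countable intersection of Borel sets.

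First I would dispatch condition (i), which is the standard Cauchy-sequence condition: for every $a\in\omega$ and every $k\geq 1$ there exists $N$ with $d^B(f_n(a),f_m(a))<1/k$ whenever $n,m\geq N$. The coordinate evaluation $(B,i,j)\mapsto d^B(i,j)$ is Borel (it is just projection to a coordinate in $\mathbb R^{\omega^2}$), and $f_n(a)$ is continuous in $(f,n,a)$, so the whole condition is a countable Boolean combination of Borel predicates.

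The main step is to characterize condition (ii), assuming (i), by Borel conditions that only involve evaluations on the countable pre-substructure $\omega\cong A$. Granted (i), the map $\widehat f(a):=\lim_n f_n(a)\in\widehat B$ is well-defined on $A$. Since any homomorphism $\widehat A\to\widehat B$ is contractive (hence uniformly continuous) and $A$ is dense in $\widehat A$, an extension of $\widehat f|_A$ to a homomorphism $\widehat A\to\widehat B$ exists (and is then automatically unique) if and only if $\widehat f|_A$ satisfies: (a) $\widehat f(g^A(\bar a))=g^{\widehat B}(\widehat f(\bar a))$ for every function symbol $g$ and every $\bar a\in\omega^{\mathrm{ar}(g)}$; (b) $\widehat f(c^A)=c^{\widehat B}$ for every constant $c$; and (c) $|R^{\widehat B}(\widehat f(\bar a))|\leq|R^{\widehat A}(\bar a)|=|R^A(\bar a)|$ for every relation symbol $R$ (in particular $d$, which yields contractivity) and every $\bar a\in\omega^{\mathrm{ar}(R)}$. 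The point I expect to be the main conceptual obstacle is precisely this reduction: one must use uniform continuity of $g^{\widehat B}$ and $R^{\widehat B}$ to propagate the identities in (a) and the inequality in (c) from the dense subset $A$ to all of $\widehat A$, and one must invoke contractivity (the $R=d$ case of (c)) to guarantee that the continuous extension to $\widehat A$ exists in the first place.

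The last step is to recognize that each of (a)--(c) translates, by pushing limits through $g^{\widehat B}$ and $R^{\widehat B}$ using their moduli of continuity (and using that $g^{\widehat B},R^{\widehat B}$ agree with $g^B,R^B$ on tuples in $\omega$), into a Borel condition on $(A,B,f)$. Concretely, (a) becomes $\lim_n d^B(f_n(g^A(\bar a)),g^B(f_n(\bar a)))=0$, (b) becomes $\lim_n d^B(f_n(c^A),c^B)=0$, and (c) becomes $\limsup_n|R^B(f_n(\bar a))|\leq|R^A(\bar a)|$. Since $L$ is countable and the tuples $\bar a$ range over the countable sets $\omega^k$, we are taking a countable conjunction over Borel predicates, and intersecting this with (i) realizes $X_1$ as a Borel subset of $X^{T}_{\Ob}\times X^{T}_{\Ob}\times(\omega^\omega)^\omega$. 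Apart from the dense-subset reduction above, everything here is a routine verification of Borelness of standard arithmetic-with-limits expressions.
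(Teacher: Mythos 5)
Your proposal is correct and follows essentially the same route as the paper: encode (i) as the countable Cauchy condition, and encode (ii) via countable conjunctions over tuples from the dense pre-substructure expressing $\limsup_n|R^B(f_n(\bar a))|\leq|R^A(\bar a)|$ and $\lim_n d_B(f_n(\rho^A(\bar a)),\rho^B(f_n(\bar a)))=0$, then argue sufficiency by noting that the $R=d$ case gives contractivity, so the limit map extends uniquely and continuously to $\widehat{A}$ and is a homomorphism by density. The paper's proof is exactly this, written with the $\bigwedge_{\varepsilon}\bigvee_N\bigwedge_{n\geq N}$ quantifier blocks in place of your $\lim$/$\limsup$ formulations.
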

\begin{proof}
     Let $(A,B,(f_n)_{n\in\omega})\in X^{\theory}_{\Ob}\times X^{\theory}_{\Ob}\times {(\omega^\omega)}^{\omega}$. We check that the property $(A,B,(f_n)_n)\in X_1$ is indeed Borel.
    \begin{itemize}
        \item[(i)] The property that $(f_n(a))_{n\in\omega}$ is Cauchy for every $a\in A$ is captured by the formula
        \[ \bigwedge_{a\in A}\bigwedge_{\varepsilon >0} \bigvee_{N<\omega} \bigwedge_{n,m > N} d_B(f_n(a),f_m(a)) < \varepsilon \]
        which is Borel as we only used countable intersections and unions and the condition $d_B(f_n(a),f_m(a))<\varepsilon$ is open. 
        \item[(ii)] Let $R$ be some relation symbol of arity $k$. We check that $\widehat{f}$ respecting $R$ in the sense of (\ref{eq: respect relation symbols}) is a Borel condition on $(A,B,f)$. The condition is captured by the formula
        \[ \bigwedge_{a\in A^k} \bigwedge_{\varepsilon > 0}\bigvee_{N<\omega}\bigwedge_{n\geq N} |R^B(f_n(a))|<|R^A(a)|+\varepsilon, \]
        which is clearly Borel. Note it is crucial here that $R^{\widehat{B}}$ is continuous, i.e. that $R^{\widehat{B}}(\widehat{f}(a))=\lim_n R^B(f_n(a))$. Similarly, for a $k$-ary function symbol $\rho$, the property that $\widehat{f}$ commutes with $\rho$ is defined by
        \[\bigwedge_{a\in A^k}\bigwedge_{\varepsilon > 0}\bigvee_{N<\omega}\bigwedge_{n\geq N} d_B\big(f_n(\rho^A(a)),\rho^B(f_n(a))\big)<\varepsilon,\]
        which is Borel in $(A,B,f)$.
        Note that we again used continuity of $\rho^{\widehat{B}}$; and the fact that $A$ is closed under $\rho^A$ for well-definedness of $f_n(\rho^A(a))$.
    \end{itemize}
    The above conditions are not just necessary, but also sufficient for $(A,B,(f_n)_{n\in\omega})\in X_1$. Indeed, any $(f_n)_{n\in\omega}$ with these properties induces a map $f' \colon A\to\widehat{B}$ with $f'(a):=\lim_n f_n(a)$, and this map is continuous (in fact contractive) with respect to $d_A$ and $d_B$, as can be seen by applying $(ii)$ to $R=d$. It follows that $f'$ can uniquely be extended to a continuous $\widehat{f} \colon \widehat{A}\to\widehat{B}$, and this will be a homomorphism by (ii) and density of $A$.
\end{proof}

It is clear that whenever $A,B\in X_{\Ob}^{\theory}=\Mod_\omega(L)$, every morphism of $L$-structures $\widehat{A}\to\widehat{B}$ is represented by some $f\in X_1(A,B)$. Indeed, we can pick for each $a\in A$ a sequence $(f_n(a))_{n\in\omega}\in B^\omega$ converging to $\widehat{f}(a)$. However, the choice of $(f_n(a))_{n\in\omega}$ is highly non-canonical. 
This is remedied by the following lemma.

\begin{lemma}\label{lemma:Borel_cat_of_L_structures}
    Let $\pi \colon X_1\to X^{\theory}_{\Ob}\times X^{\theory}_{\Ob} \times \mathbb{R}^{\omega \times \omega}$ be the map taking $(A,B,f)$ to $(A,B,d_f)$ where for any $a,b\in\omega$,
    \[d_f(a,b):=d_B(\lim_n f_n(a),b).\]
    Then $\pi$ is a Borel map and $\pi((A,B,f))=\pi((A,B,g))$ if and only if $\widehat{f}=\widehat{g}$. Moreover, the image $\pi(X_1)$ is a Borel set.
\end{lemma}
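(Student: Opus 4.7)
My plan is to verify the three assertions of the lemma in order. The first, Borelness of $\pi$, follows from the fact that for $(A,B,f) \in X_1$ the sequence $(f_n(a))_n$ is Cauchy in $B$, so the limit exists in $\widehat{B}$, and continuity of $d_B$ gives
\[ d_f(a,b) = d_B(\lim_n f_n(a), b) = \lim_n d_B(f_n(a), b). \]
This exhibits each coordinate of $d_f$ as a pointwise limit of Borel maps of $(A,B,f)$, so $\pi$ is Borel. The second claim is a standard density argument: since $B = \omega$ is dense in $\widehat{B}$, a point $x \in \widehat{B}$ is uniquely determined by $b \mapsto d_B(x,b)$ on $B$; therefore $d_f = d_g$ forces $\widehat{f}(a) = \widehat{g}(a)$ for every $a \in A$, and since homomorphisms are $1$-Lipschitz (via the clause for the distinguished symbol $d$ in Definition~\ref{definition: homomorphism}) and $A$ is dense in $\widehat{A}$, this extends to $\widehat{f} = \widehat{g}$.

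The main obstacle is showing $\pi(X_1)$ is Borel, as a priori it is only analytic. My approach is to construct an explicit Borel section rather than to characterize the image by analyzing all of the homomorphism axioms directly. Define
\[ \sigma \colon X^{\theory}_{\Ob} \times X^{\theory}_{\Ob} \times \mathbb{R}^{\omega \times \omega} \to (\omega^\omega)^\omega \]
by $\sigma(A,B,h)_n(a) := \min\{b \in \omega : h(a,b) < 1/n\}$, with the convention $\min \emptyset := 0$. This $\sigma$ is Borel because for each fixed $n,a,k$ the preimage $\{h : \sigma(\cdot)_n(a) = k\}$ is cut out by a finite collection of inequalities on the coordinates of $h$. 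I then plan to prove
\[ \pi(X_1) = \bigl\{(A,B,h) : (A,B,\sigma(A,B,h)) \in X_1 \ \text{and}\ \pi(A,B,\sigma(A,B,h)) = (A,B,h) \bigr\}, \]
whose right-hand side is Borel by Lemma~\ref{lemma: Cauchy sequences X_1 Borel} together with Borelness of $\pi$ and $\sigma$.

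The containment $\supseteq$ of this identity is immediate, since the right-hand side conditions directly witness that $(A,B,h)$ is in the image of $\pi$. For $\subseteq$, I would argue that if $(A,B,h) = \pi(A,B,g)$ for some $(A,B,g) \in X_1$, then $h(a,b) = d_B(\widehat{g}(a), b)$; since $\widehat{g}(a) \in \widehat{B}$ is the limit of a Cauchy sequence in $B$, the set $\{b \in \omega : h(a,b) < 1/n\}$ is nonempty for every $a$ and $n$, so $\sigma(A,B,h)_n(a)$ is a genuine minimum satisfying $d_B(\widehat{g}(a), \sigma(A,B,h)_n(a)) < 1/n$. Hence $\sigma(A,B,h)_n(a) \to \widehat{g}(a)$ in $\widehat{B}$, the sequence is Cauchy, and the induced map on $A$ agrees with $\widehat{g}|_A$ and extends continuously to the homomorphism $\widehat{g}$. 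This places $(A,B,\sigma(A,B,h))$ in $X_1$ with $\pi$-image $(A,B,h)$. I expect the remaining Borelness checks for the section to be routine once $\sigma$ is defined in this combinatorial way.
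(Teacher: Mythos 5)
Your proofs of the first two assertions coincide with the paper's (the paper verifies Borelness of $\pi$ via its graph, using the same approximation $d_f(a,b)=\lim_n d_B(f_n(a),b)$, and the same density argument for injectivity on $\sim$-classes). For the third assertion your route is genuinely different and it is correct. The paper proves $\pi(X_1)$ is Borel by Suslin's theorem: it is analytic as a Borel image of the Borel set $X_1$, and co-analytic because the complement is characterized by an explicit (analytic) case analysis of how a matrix $\delta\in\mathbb{R}^{\omega\times\omega}$ can fail to encode a homomorphism --- either $\delta$ does not induce a well-defined map into $\widehat{B}$, or the induced map violates one of the homomorphism clauses. You instead build an explicit Borel section $\sigma$ (a canonical choice of approximating Cauchy sequence via minimal indices) and identify $\pi(X_1)$ with the Borel set $\{(A,B,h) : (A,B,\sigma(A,B,h))\in X_1 \text{ and } \pi(A,B,\sigma(A,B,h))=(A,B,h)\}$; both inclusions check out, and the set is Borel by Lemma~\ref{lemma: Cauchy sequences X_1 Borel} plus Borelness of $\pi$ and $\sigma$. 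What your approach buys: it avoids Suslin's theorem and the somewhat delicate enumeration of failure modes, and it yields strictly more information --- a Borel right inverse of $\pi$ on its image, which makes the smoothness of the equivalence relation $\sim$ (remarked on abstractly after the lemma in the paper) completely explicit. What the paper's approach buys is that the co-analyticity computation is reusable boilerplate for similar quotient constructions where no canonical selector is available. One cosmetic imprecision: for $k=0$ the preimage $\{h : \sigma(A,B,h)_n(a)=0\}$ is not cut out by finitely many inequalities because of the $\min\emptyset:=0$ convention (it requires the countable conjunction $\bigwedge_{j}h(a,j)\geq 1/n$ as one of the disjuncts); this is still Borel, so nothing breaks.
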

\begin{proof}
    To check that $\pi$ is a Borel map it is enough to see that its graph is Borel.
    Now, simply observe that $\pi(A,B,f)=(C,D,d_f)$ if and only if 
    \[A=B~\wedge~B=D~\wedge ~ \bigwedge_{a\in A}\bigwedge_{b\in B}\bigwedge_{\varepsilon>0}\bigvee_{N<\omega}\bigwedge_{n\geq N}\big|d_f(a,b)-d_B(f_n(a),b)\big|<\varepsilon,\]
    which is a Borel condition. The second assertion follows, since $\lim_n f_n(a)=\lim_n g_n(a)$ in $\widehat{B}$ if and only if $d_B(\lim f_n(a),b)=d_B(\lim g_n(a),b)$ for all $b\in B$, by density of $B$ in $\widehat{B}$.

    It follows also that $\pi(X_1)$ is analytic, since $X_1$ is Borel by Lemma \ref{lemma: Cauchy sequences X_1 Borel} and $\pi$ is a Borel map. By Suslin's theorem (Fact \ref{fact: suslin's thm}), we are left with checking that $\pi(X_1)$ is co-analytic.

    Let $(A,B,\delta)\in X^{\theory}_{\Ob}\times X^{\theory}_{\Ob}\times {\mathbb R}^{\omega\times\omega}$. Then $(A,B,\delta)\notin \pi(X_1)$ if and only if $(A,B,\delta)$ satisfies one of the following conditions.
    \begin{itemize}
        \item $\delta$ does not induce a function. The first case is that there is $a\in A$, and Cauchy sequences $(b_n),(c_n)\in B^\omega$ w.r.t. $d_B$ with $\lim_n b_n\neq\lim_n c_n$ and $\lim_n\delta(a,b_n)=\lim_n\delta(a,c_n)=0$.
        The second case is that there is some $a\in A$ that does not map to any element in $\widehat{B}$, that is, 
        \[\bigvee_{\epsilon>0}\bigwedge_{b\in B}\delta(a,b)>\epsilon.\]
        Note that both of these expressions are Borel conditions in $(A,B,\rho)$ and $a,(b_n),(c_n)$. The set of Cauchy sequence with respect to $B$ is Borel in $\omega^\omega$\footnote{This can be seen via the same proof as in Lemma \ref{lemma: Cauchy sequences X_1 Borel}.}, and similarly the condition that $\lim_nb_n\neq \lim_n c_n$ in $\widehat{B}$ is Borel. Thus, the set of triples $(A,B,\delta)$ satisfying one of these two cases is analytic, as the projection (coming from existential quantification over the Cauchy sequences) of a Borel set. 
        \item $\delta$ corresponds to a function that does not induce a homomorphism of ${L}$-structures as in Definition \ref{definition: homomorphism}. Consider, for example, a binary relation $R$. Then the function induced by $\delta$ fails to satisfy (\ref{eq: respect relation symbols}) if there are $a,a'\in A$ and Cauchy sequences $(b_n)$ and $ (b_n')$ in $B$ with $\lim_n\delta(a,b_n)=\lim_n\delta(a',b_n')=0$
        and
        \[ \bigvee_{\varepsilon > 0} \bigwedge_{N<\omega} \bigvee_{n>N} |R(b_n, b'_n)|\geq |R(a,a')| +\varepsilon .\]
        Again, this condition is analytic as a projection of a Borel set. The case of a function symbol or constant symbol is similar. We take the countable union over all symbols in $L$ of these conditions, which is still analytic. 
    \end{itemize}
    We conclude that the complement of $\pi(X_1)$ in $X^{\theory}_{\Ob}\times X^{\theory}_{\Ob}\times{\mathbb R}^{\omega\times\omega}$ is analytic, so $\pi(X_1)$ is Borel.
\end{proof} 

The map $\pi$ is precisely the quotient map under the equivalence $(A,B,f)\sim(A,B,g) \iff \widehat{f}=\widehat{g}$, and the above lemma verifies that the quotient is itself a standard Borel space, and $\pi$ is a Borel map. In the language of Borel equivalence relations, this means that $\sim$ is \textit{smooth} (see for example \cite[Definition 3.11]{Kechris_2024}).
We have successfully isolated a standard Borel space -- namely $\pi(X_1)$ -- that is in natural bijection with the family of morphisms in $\mathcal{C}_{\theory}$, and will serve as the space of morphisms in the Borel category $\mathbf{X}^{\theory}$. Thus, we write $X^{\theory}_{\Mor}:=\pi(X_1)$. We define $s,t \colon X^{\theory}_{\Mor} \to X^{\theory}_{\Ob}$ be the first and second projections respectively, and let $i \colon X^{\theory}_{\Ob} \to X^{\theory}_{\Mor}$ be the map sending $A$ to $\pi(A,A,i_A)$, where $i_A(a,b):=d_A(a,b)$. We define $\circ \colon X^{\theory}_{\Mor} \times_{t, X^{\theory}_{\Ob},s} X^{\theory}_{\Mor} \to X^{\theory}_{\Mor}$ to come from composition of homomorphisms in $\mathcal{C}_{\theory}$.

\begin{proposition}\label{proposition:Borel_category_of_L_structures}
    $\mathbf{X}^{\theory}=(X^{\theory}_{\Ob},X^{\theory}_{\Mor})$ forms a Borel category with the maps $s, t, i, \circ$ defined above.
\end{proposition}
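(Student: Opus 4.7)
The plan is to check four things: that $X^{\theory}_{\Ob}$ and $X^{\theory}_{\Mor}$ are standard Borel spaces, that the structure maps $s,t,i$ are Borel, that composition $\circ$ is Borel, and finally that the category axioms hold. The first point is immediate from earlier material: $X^{\theory}_{\Ob}=\Mod_\omega(\theory)$ is Borel by Lemma \ref{lem: set of separable models is Borel} (using that $\theory$ is countable), while $X^{\theory}_{\Mor}=\pi(X_1)$ is Borel by Lemma \ref{lemma:Borel_cat_of_L_structures}, so both inherit a standard Borel structure from the ambient product. The source and target maps are just the first two coordinate projections, hence Borel. For the identity map $i(A)=\pi(A,A,(i_A)_n)$ with $i_A(a,b)=d_A(a,b)$, Borelness follows from the fact that extracting the $d$-coordinate of $A\in X_L$ is Borel and $\pi$ is Borel.

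The main substantive step is Borelness of $\circ$. Given composable morphisms represented by $(A,B,\delta_1)$ and $(B,C,\delta_2)$ in $X^{\theory}_{\Mor}$ with $\widehat{f}\colon\widehat{A}\to\widehat{B}$ and $\widehat{g}\colon\widehat{B}\to\widehat{C}$ the associated homomorphisms, I will show that the code of $\widehat{g}\circ\widehat{f}$ is given by the inf-convolution
\[
\delta_3(a,c)\;=\;\inf_{b\in B}\bigl(\delta_1(a,b)+\delta_2(b,c)\bigr), \qquad a,c\in\omega.
\]
The inequality $\leq$ uses that $\widehat{g}$ is contractive together with the triangle inequality in $\widehat{C}$:
\[
d_C\bigl(\widehat{g}(\widehat{f}(a)),c\bigr)\;\leq\; d_C\bigl(\widehat{g}(\widehat{f}(a)),\widehat{g}(b)\bigr)+d_C\bigl(\widehat{g}(b),c\bigr)\;\leq\;\delta_1(a,b)+\delta_2(b,c).
\]
The reverse inequality follows from density of $B$ in $\widehat{B}$: choose $b_n\in B$ with $\delta_1(a,b_n)\to 0$, and then by continuity of $\widehat{g}$ we have $\delta_2(b_n,c)=d_C(\widehat{g}(b_n),c)\to d_C(\widehat{g}(\widehat{f}(a)),c)=\delta_3(a,c)$. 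Since the right-hand side of the display is a countable infimum of Borel expressions in $(\delta_1,\delta_2)$, the map $((A,B,\delta_1),(B,C,\delta_2))\mapsto(A,C,\delta_3)$ is Borel, and its image lies in $X^{\theory}_{\Mor}$ because composition of homomorphisms is a homomorphism (so the output corresponds to a valid Cauchy-sequence representative via Lemma \ref{lemma:Borel_cat_of_L_structures}).

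Finally, the category axioms (associativity of $\circ$ and identity laws for $i$) are inherited from the category of separable $\Lang$-structures with homomorphisms: by construction, $\pi$ identifies $X^{\theory}_{\Mor}(A,B)$ with $\Hom(\widehat{A},\widehat{B})$, identity morphisms correspond to $\mathrm{id}_{\widehat{A}}$, and the inf-convolution formula produces precisely the code of $\widehat{g}\circ\widehat{f}$. Thus the category axioms in $\mathbf{X}^{\theory}$ reduce to the corresponding axioms in $\mathcal{C}_{\theory}$. The main obstacle is the composition step, and specifically justifying the inf-convolution identity above; once that identity is in hand, all Borel-measurability claims reduce to countable infima, unions, and intersections of Borel sets.
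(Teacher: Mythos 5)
Your proposal is correct and follows the same overall decomposition as the paper: both reduce everything to Borelness of the composition map, citing Lemma \ref{lem: set of separable models is Borel} and Lemma \ref{lemma:Borel_cat_of_L_structures} for the object and morphism spaces, observing that $s,t,i$ are Borel, and transporting the category axioms from $\mathcal{C}_\theory$. The one substantive step is handled by a slightly different device. The paper characterizes the \emph{graph} of $\circ$ as a Borel set, via the condition that $(A,C,\tau)=(B,C,\mu)\circ(A,B,\rho)$ iff $\bigwedge_{a}\bigwedge_{\varepsilon>0}\bigvee_{b}\bigvee_{c}\max(\rho(a,b),\mu(b,c),\tau(a,c))<\varepsilon$, and then invokes the fact that a function with Borel (analytic) graph is Borel. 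You instead give an explicit formula for the composite code as the inf-convolution $\delta_3(a,c)=\inf_{b\in B}(\delta_1(a,b)+\delta_2(b,c))$, which exhibits $\circ$ directly as a countable infimum of Borel functions, with no appeal to Suslin's theorem for this step. Your verification of the identity (upper bound from contractivity of homomorphisms plus the triangle inequality, lower bound from density of $B$ in $\widehat{B}$ and continuity of $\widehat{g}$) is the same pair of ingredients implicitly underlying the paper's graph characterization, so the two arguments are equivalent in content; yours is marginally more constructive, the paper's marginally shorter. One cosmetic remark: in your lower-bound computation you write $\delta_3(a,c)$ for $d_C(\widehat{g}(\widehat{f}(a)),c)$ before having established that this equals the inf-convolution you defined $\delta_3$ to be; the logic is fine but the symbol is momentarily doing double duty.
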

\begin{proof}
    We proved that $X^{\theory}_{\Ob}=\Mod_\omega(\theory)$ is Borel in Lemma~\ref{lem: set of separable models is Borel} and that $X^{\theory}_{\Mor}$ is Borel in Lemma \ref{lemma:Borel_cat_of_L_structures}. Moreover, the maps $s,t,i$ are clearly Borel. It is also clear that this data forms a category. The only things left to check is
    that the composition map $\circ \colon X^{\theory}_{\Mor}\times_{t, X^{\theory}_{\Ob},s} X^{\theory}_{\Mor} \to X^{\theory}_{\Mor}$ is Borel.
    This follows once we observe that for $(A,B,\rho),(B,C,\mu),(A,B,\tau)\in X^{\theory}_{\Mor}$ we have $(A,C,\tau)=(B,C,\mu)\circ(A,B,\rho)$ if and only if
    \[\bigwedge_{a\in A}\bigwedge_{\varepsilon>0}\bigvee_{b\in B}\bigvee_{c\in C}\max\big(\rho(a,b),\mu(b,c),\tau(a,c)\big)<\varepsilon.\]
\end{proof}

We shall denote the Borel category $\mathbf{X}^{\theory}$ from Proposition \ref{proposition:Borel_category_of_L_structures} by $\bfMod_\omega(\theory)$ and call this the Borel category of separable models of $\theory$. This is justified by the following.

\begin{proposition} \label{proposition: Borel model of the category of sigma}
        $\bfMod_\omega(\theory)$ is a Borel model of $\mathcal{C}_{\theory}$.
\end{proposition}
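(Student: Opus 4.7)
The plan is to verify the equivalence of categories by exhibiting an explicit functor $F\colon\bfMod_\omega(\theory)\to\mathcal{C}_{\theory}$ and checking the criterion from Fact \ref{fact: equivalences of categories}: essential surjectivity and full faithfulness. On objects, $F$ will be the already-studied completion map $A\mapsto\widehat{A}$. On morphisms, given $[\,(A,B,d_f)\,]\in X^{\theory}_{\Mor}$, set $F((A,B,d_f)):=\widehat{f}\colon\widehat{A}\to\widehat{B}$, where $\widehat{f}$ is the unique homomorphism extending $a\mapsto \lim_n f_n(a)$ provided by Lemma \ref{lemma: Cauchy sequences X_1 Borel}. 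By Lemma \ref{lemma:Borel_cat_of_L_structures}, $\widehat{f}$ depends only on the fiber $\pi^{-1}(A,B,d_f)$, so $F$ is well-defined.

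The first step is to verify that $F$ is a functor. Preservation of identities is immediate from the definition $i(A)=\pi(A,A,i_A)$ with $i_A(a,b)=d_A(a,b)$, which corresponds to the constant sequence $f_n=\id_A$, hence $\widehat{f}=\id_{\widehat{A}}$. Preservation of composition was, in effect, built into the definition of $\circ$ in the proof of Proposition \ref{proposition:Borel_category_of_L_structures}: the composition map on $X^{\theory}_{\Mor}$ was defined so as to correspond to composition of the associated homomorphisms in $\mathcal{C}_{\theory}$.

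The second step is essential surjectivity. Every separable model $M$ of $\theory$ admits a countable dense pre-substructure $M_0$ (containing all constants and closed under all function symbols), and encoding $M_0$ as in display \eqref{Borel L-structure} produces an $A\in\Mod_\omega(\theory)$ with $\widehat{A}\cong M$ as $L$-structures. Thus every object in $\mathcal{C}_{\theory}$ is isomorphic to one in the image of $F$.

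The third step is full faithfulness. Faithfulness is immediate from Lemma \ref{lemma:Borel_cat_of_L_structures}: the map $\pi$ identifies exactly those $(A,B,f),(A,B,g)\in X_1$ with $\widehat{f}=\widehat{g}$, so distinct points of $X^{\theory}_{\Mor}(A,B)$ correspond to distinct homomorphisms $\widehat{A}\to\widehat{B}$. For fullness, given any homomorphism $\varphi\colon\widehat{A}\to\widehat{B}$, one uses density of $B\subseteq\widehat{B}$: for each $a\in A$, pick a sequence $(f_n(a))_{n\in\omega}\in B^\omega$ with $d_{\widehat{B}}(f_n(a),\varphi(a))<1/n$. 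Because $\varphi$ commutes with the functions and respects the relation symbols on $\widehat{A}$ and $\widehat{B}$, the sequence $(f_n)_{n\in\omega}$ satisfies the clauses of Lemma \ref{lemma: Cauchy sequences X_1 Borel}, so $(A,B,f)\in X_1$ and $\widehat{f}=\varphi$; hence $[\,(A,B,d_f)\,]\in X^{\theory}_{\Mor}(A,B)$ maps to $\varphi$ under $F$. The only mild obstacle here is being careful that the extension $\widehat{f}$ produced in Lemma \ref{lemma: Cauchy sequences X_1 Borel} really does coincide with $\varphi$; this follows from uniqueness of continuous extensions from dense subsets together with the fact that both $\widehat{f}$ and $\varphi$ agree on $A$.

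Combining these three steps with Fact \ref{fact: equivalences of categories} yields that $F$ is an equivalence of categories, so $\bfMod_\omega(\theory)$ is a Borel model of $\mathcal{C}_{\theory}$.
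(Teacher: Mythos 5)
Your proposal is correct and follows essentially the same route as the paper: the completion functor $\widehat{\,\cdot\,}$ on objects and morphisms, full faithfulness via Lemma \ref{lemma:Borel_cat_of_L_structures} (with fullness coming from the observation, already recorded after Lemma \ref{lemma: Cauchy sequences X_1 Borel}, that every homomorphism $\widehat{A}\to\widehat{B}$ is represented by a sequence of approximations in $B$), essential surjectivity via dense pre-substructures, and Fact \ref{fact: equivalences of categories}. The paper's proof is merely terser, delegating functoriality to the way $s,t,i,\circ$ were constructed.
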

\begin{proof}
    Let $\widehat{\,\cdot\,}\colon \Mod_{\omega}(\theory) \to \mathcal{C}_{\theory}$ take any $A\in X^{\theory}_{\Ob}$ to $\widehat{A}$ and any $(A,B,\delta)\in X^{\theory}_{\Mor}(A,B)$ to the unique ${L}$-homomorphism $\widehat{f} \colon \widehat{A} \to \widehat{B}$ with $\delta(a,b)=d_B(\widehat{f}(a),b)$ for each $a\in A$ and $b\in B$. We have constructed $s,t,i$ and $\circ$ precisely such that this is a functor. Moreover, for fixed $A,B\in X^{\theory}_{\Ob}$, $\widehat{\,\cdot\,}$ is bijective from $X^{\theory}_{\Mor}(A,B)$ to the set of $L$-homomorphisms by construction (see Lemma \ref{lemma:Borel_cat_of_L_structures}). In other words, $\widehat{\,\cdot\,}$ is fully faithful.

    Finally, $\widehat{\,\cdot\,}$ is essentially surjective, since every separable model of $\theory$ is represented by some $A\in X^{\theory}_{\Ob}$, which can be seen by picking a dense countable subset and closing it under all function symbols of $f$, as described above Definition \ref{definition: Mod_omega(L)}. The statement follows by Fact \ref{fact: equivalences of categories}.
\end{proof}

\subsection{Universal functors from Borel categories}\label{subsection: analytic conditions}

Fix a Borel category $\mathbf{X} = (X_{\Ob},X_{\Mor},s,t,i,c)$ as in Definition \ref{def: Borel category}. The goal of this part of the paper is to prove Theorem \ref{thm:T_equivalence_is_analytic} from which later, the analyticity of $KK$-equivalence will be deduced.

\begin{construction} \label{construction: describing L_0 depending on X}
    Consider a multisorted (discrete) first-order language $L_0$ such that for every $x,y \in X_{\Ob}$ there is a sort $\Mor(x,y)$ and for every $x,y,z$ there is a function symbol
    \[ c_{x,y,z}:\Mor(x,y) \times \Mor(y,z) \to \Mor(x,z) \]
    and, for every $f \in X_{\Mor}$, there is a constant $[f]$ in the sort $\Mor(s(f),t(f))$.

    Let $T_0$ be the $L_0$-theory saying that
    \begin{enumerate}
        \item the compositions $c_{x,y,z}$ are associative,
        \item composing with $[i(x)] \in \Mor(x,x)$ is the identity,
        \item the relation $c_{x,y,z}([f],[g]) = [c(f,g)]$ holds.
    \end{enumerate}
    Note that there is a one-to-one correspondence between models $M$ of $T_0$, and functors $F:\mathbf{X}\to\mathcal{C}$ with $\Ob(\mathcal{C})=X_{\Ob}$ and $F$ preserving objects, given by $\Mor_\mathcal{C}(x,y):=\Mor(x,y)^M$ for $x,y\in X_{\Ob}$ and $F(f):=[f]^M$ for $f\in X_{\Mor}$.
\end{construction}

\begin{definition} \label{definition: X-expansions}
    Let $L$ be an expansion of $L_0$ defined as 
    \[ 
    L_0 \cup \{ R_{x_1, \dots, x_{2a_i}, f_1, \dots, f_{b_i}}^i : i \in I_R\} \cup \{ F_{x_1, \dots, x_{2d_i+2}, f_1, \dots, f_{e_i}}^i : i \in I_F\} \cup \{ c_{x_1, \dots, x_{g_i}, f_1, \dots, f_{h_i}}^i : i \in I_c\},
    \]
    where $I = I_R \cup I_F \cup I_c$ is a standard Borel space split into three Borel parts, $a_i, b_i, d_i,e_i, g_i,h_i \in \mathbb{N}$ are Borel sequences, and $x_a$'s and $f_b$'s vary over elements of $X_{\Ob}$ and $X_{\Mor}$ respectively.
    For each $i \in I_R$, the symbols $R_{x_1, \dots, x_{2a_i}, f_1, \dots, f_{b_i}}^i$ are relation symbols with domain equal $\prod_{l=1}^{a_i} \Mor(x_{2l-1}, x_{2l})^{k_i}$ for some Borel sequence $(k_i)_{i \in I_R}$. For $i \in I_F$ the symbols $F_{x_1, \dots, x_{2d_i+2}, f_1, \dots, f_{e_i}}^i$ are function symbols with domain equal $\prod_{l=1}^{2d_i} \Mor(x_{2l-1}, x_{2l})^{m_i}$, for some Borel sequence $(m_i)_{i \in I_F}$, and codomain equal $\Mor(x_{2d_i+1}, x_{2d_i+2})$. 
    Similarly, for $i \in I_c$,  $c_{x_1, \dots, x_{g_i}, f_1, \dots, f_{h_i}}^i$ are constants of the sort $\Mor(x_1, x_2)$.
    
    We call such $L$ an $\mathbf{X}$\textbf{-expansion} of $L_0$.
    Let $T$ be an $L$-theory. We call $T$ a \textbf{Borel condition} (for $\mathbf{X}$), if as a set of sentences in $L$ it is Borel. Similarly, we say that $T$ is an \textbf{analytic condition}, if the underlying set of sentences in $L$ is analytic.
\end{definition}

Let us clarify the above definition. Note that an $\mathbf{X}$-expansion $L$ of $L_0$ has sorts indexed by pairs of elements from $X_{\Ob}$ and relation symbols indexed either by elements of $X_{\Ob}^3$ (namely $c_{x,y,z}$), or by products $I \times X_{\Mor}^{<\omega} \times X_{\Ob}^{<\omega}$ (e.g. $R_{x_1, \dots, x_{n_i}, f_1, \dots, f_{m_i}}^i$ or constants $[f]$ for each ${f \in X_{\Mor}}$). To form $L$-formulas we need the symbols from $L$, finitely many logical symbols $=,\neg, \wedge, \vee, \rightarrow, (, ), \forall, \exists$, and variables indexed by sorts $\Mor(x,y)$. Moreover, the set of $L$-formulas forms a Borel subset of the set of all strings of symbols and variables from the previous line (we skip the inductive proof of this fact, similar to the proof of Lemma \ref{lemma:set_of_proofs_is_Borel}). Hence, the set of $L$-formulas has a natural Borel structure making it a standard Borel space. With respect to this structure we define $T$ to be a Borel condition. Note that the subset of $L$-sentences is also Borel.

\begin{example}\label{example:Borel_condition}
    Let $L = L_0 \cup \{0_{x,y}, +_{x,y}:x,y \in X_0\}$ with $0_{x,y} \in \Mor(x,y)$ and $+_{x,y}:\Mor(x,y)^2 \to \Mor(x,y)$. Let $T$ consist of $T_0$ and axioms asserting that $(\Mor(x,y), +_{x,y}, 0_{x,y})$ is an abelian group, e.g.,
    \[ \bigl( \forall \alpha \in \Mor(x,y) \bigr)\bigl( +_{x,y}(\alpha, 0_{x,y}) = \alpha \bigr) \in T, \]
    Note that by writing $\alpha \in \Mor(x,y)$ we do not mean a set quantifier, but rather the indication of the sort over which the quantifier is taken. This and other axiom schemes in $T$ are Borel as sets of $L$-sentences, so $T$ is a Borel condition.
\end{example}

\begin{lemma}\label{lemma:set_of_proofs_is_Borel}
    Let $T$ be a Borel [resp. analytic] condition in an $\mathbf{X}$-expansion $L$ of $L_0$. Let $S$ be the set of finite sequences of $L$-formulas equipped with a natural standard Borel space structure. Let $P \subset S$ be the set of sequences that represent proofs in $T$. Then $P$ is a Borel [resp. analytic] subset of $S$.
\end{lemma}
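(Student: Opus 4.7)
The plan is to unfold the definition of a formal proof and verify that each ingredient is a Borel (resp.\ analytic) condition on sequences of formulas, with the complexity being inherited from $T$ only through the single clause ``$\varphi$ is an axiom of $T$.''

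First, I would fix a standard Hilbert-style deductive calculus for many-sorted first-order logic, consisting of: a Borel (in fact, recursive) set of logical axiom schemes, plus the rules of modus ponens and generalization. The point is that ``$\varphi$ is a logical axiom'' is a concrete syntactic condition: each scheme (e.g.\ $\psi \to (\chi \to \psi)$, the quantifier axioms, the equality axioms, etc.) is defined by a fixed parsing pattern on the string of symbols, and matching such a pattern involves only finitely many existential quantifiers over subformulas together with equality checks on strings. Each individual scheme therefore carves out a Borel subset of the Borel space of $L$-formulas, and the countable union over all schemes remains Borel.

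Next, for a sequence $(\varphi_1,\dots,\varphi_n) \in S$ to be a proof in $T$, we require that for each $k \le n$ one of the following holds: (a) $\varphi_k$ is a logical axiom; (b) $\varphi_k \in T$; (c) there exist $i,j < k$ with $\varphi_j = (\varphi_i \to \varphi_k)$ (modus ponens); or (d) there exists $i < k$ and a variable $x$ such that $\varphi_k = \forall x\, \varphi_i$ (generalization). Clauses (a), (c), (d) are Borel in the sequence by the remarks above: (c) and (d) are countable disjunctions over the finitely many $i,j<k$ and over the Borel space of variables, with the underlying equality of formulas being a Borel relation in $S$. Clause (b) is Borel if $T$ is Borel, and analytic if $T$ is analytic, since it is the preimage of $T$ under the Borel coordinate projection $(\varphi_1,\dots,\varphi_n)\mapsto \varphi_k$.

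Putting this together, $P$ is the intersection over $k$ of the condition ``$\varphi_k$ satisfies (a)$\vee$(b)$\vee$(c)$\vee$(d),'' intersected over the Borel decomposition of $S$ by sequence-length $n$. Since $S = \bigsqcup_{n<\omega} (L\text{-formulas})^n$ is a countable disjoint union of standard Borel spaces and each length-$n$ component contributes a finite conjunction of Borel (resp.\ analytic) conditions, $P$ is itself Borel (resp.\ analytic). The main thing to be careful about is the very first step --- verifying that the set of $L$-formulas itself is a Borel subset of the set of strings of symbols --- because the alphabet is not finite but a standard Borel space indexed by $X_{\Ob}, X_{\Mor}$, and $I$. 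This is handled by the standard inductive construction of the well-formed formula predicate as a Borel fixed point, essentially parallel to the sketch in Lemma \ref{lemma: Cauchy sequences X_1 Borel}; I would simply invoke it as a routine check, since no part of it interacts with $T$.
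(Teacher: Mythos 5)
Your proposal is correct and follows essentially the same route as the paper: decompose the proof predicate into logical-axiom membership, membership in $T$, and closure under modus ponens and quantifier rules, observe that only the $T$-clause carries the Borel/analytic distinction, and take countable unions and intersections over positions and sequence lengths, with the Borelness of the set of well-formed $L$-formulas handled by a separate syntactic induction. One small imprecision: the existential over variables in your clause (d) is not a \emph{countable} disjunction (the set of variables is an uncountable standard Borel space), but this is harmless since the relevant variable is determined by parsing the finite string $\varphi_k$, exactly as the paper handles it via the Borel free-variable map.
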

\begin{proof}
    Every proof is a finite sequence of formulas that is built inductively as follows. First we consider a set of formulas $F_1$ that consists of:
    \begin{itemize}
        \item axioms from $T$;
        \item first-order logic tautologies, i.e., formulas that can be obtained by taking a tautology of propositional logic and replacing each propositional variable by a first-order formula;
        \item equality axioms, i.e., reflexivity, symmetry, transitivity of equality, and axioms of the form 
        \[ (\forall x,x')(x=x' \rightarrow f(x)=f(x')), (\forall x,x')(x=x' \rightarrow (R(x) \leftrightarrow R(x')),\]
        where $f$ vary among function symbols, $R$ among relation symbols, and the arities are chosen appropriately;
        \item quantifier axioms $\forall x \varphi(x) \rightarrow \varphi(t), \psi(t) \rightarrow \exists x \psi(x)$ for any term $t$.
    \end{itemize}
    This set is a Borel [resp. analytic] subset of the set of all formulas, as the first bullet is Borel [resp. analytic] by the assumption, and the latter ones are Borel by an induction argument on their length.
  
    We define inductively sets $F_n$ of proofs of length $n$ in the following manner. A sequence $(\varphi_1, \dots, \varphi_n, \varphi_{n+1})$ is in $F_{n+1}$ if $(\varphi_1, \dots, \varphi_n)$ is in $F_n$ and one of the following holds:
    \begin{itemize}
        \item (axioms) $\varphi_{n+1} \in F_1$,
        \item (modus ponens) there are $i, j \leq n$ such that $\varphi_j = (\varphi_i \rightarrow \varphi_{n+1})$,
        \item (inference rules) there is $i \leq n$ such that $\varphi_i = (\alpha \rightarrow \beta)$ (or $\varphi_i = (\beta \rightarrow \alpha)$) and $x$ is a variable not free in $\alpha$, with 
        \[ \varphi_{n+1} = (\alpha \rightarrow \forall x \beta) \textnormal{ or respectively } \varphi_{n+1} = (\exists x \beta \rightarrow \alpha). \]
    \end{itemize}
    Using the fact that the map sending a formula to the set of its free variables is Borel (which can be proved inductively), we get that if $F_n$ is Borel [resp. analytic], then so is $F_{n+1}$. As $P$ is the union of all $F_n$'s, this finishes the proof.
\end{proof}

\begin{theorem}\label{thm:T_equivalence_is_analytic}
    Let $T$ be an analytic condition in an $\mathbf{X}$-expansion $L$ of $L_0$. Consider the equivalence relation $E$ of pairs $(x,y) \in X_{\Ob}^2$ that satisfy
    \begin{equation}\label{equation:T_models_equivalence_of_x_y}
        T \models (\exists \alpha \in \Mor(x,y))(\exists \beta \in \Mor(y,x))( \alpha \circ \beta = [i(y)] \wedge \beta \circ \alpha = [i(x)] ),
    \end{equation}
    where $\circ$ are the compositions $c_{x,y,x}, c_{y,x,y}$ respectively. Then $E$ is an analytic subset of $X_{\Ob}^2$.
\end{theorem}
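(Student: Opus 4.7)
The plan is to reduce the semantic condition $T\models\varphi_{x,y}$ to a syntactic one via Gödel's completeness theorem, and then apply Lemma \ref{lemma:set_of_proofs_is_Borel} together with the fact that analytic sets are closed under Borel images.

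First, for each $(x,y)\in X_{\Ob}^2$ I would define the $L$-sentence $\varphi_{x,y}$ appearing on the right-hand side of (\ref{equation:T_models_equivalence_of_x_y}). Using the constant symbols $[i(x)]$ and $[i(y)]$, the sort $\Mor(x,y)$ and $\Mor(y,x)$, and the composition symbols $c_{x,y,x}, c_{y,x,y}$, the sentence $\varphi_{x,y}$ is assembled from data that depends Borel-ly on $(x,y)$ (since $i\colon X_{\Ob}\to X_{\Mor}$ is Borel by hypothesis). Thus the assignment $(x,y)\mapsto\varphi_{x,y}$ is a Borel map from $X_{\Ob}^2$ into the standard Borel space of $L$-sentences.

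By Gödel's completeness theorem, $T\models \varphi_{x,y}$ if and only if there is a finite proof of $\varphi_{x,y}$ from $T$. Let $P\subseteq S$ be the analytic set of proofs in $T$ provided by Lemma \ref{lemma:set_of_proofs_is_Borel} (applied with $T$ an analytic condition), and for $p=(\psi_1,\dots,\psi_n)\in S$ write $\mathrm{last}(p):=\psi_n$; clearly $\mathrm{last}\colon S\to\{L\text{-formulas}\}$ is Borel. Then
\[ E = \bigl\{(x,y)\in X_{\Ob}^2 : \exists p\in P\text{ with }\mathrm{last}(p)=\varphi_{x,y}\bigr\}. \]
The set $\{(x,y,p)\in X_{\Ob}^2\times S : p\in P,\ \mathrm{last}(p)=\varphi_{x,y}\}$ is analytic, being the intersection of the preimage of $P$ under the projection (analytic since $P$ is) and the Borel graph-condition $\mathrm{last}(p)=\varphi_{x,y}$. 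Its projection onto $X_{\Ob}^2$ is $E$, so $E$ is analytic.

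The routine but essential check is that $\varphi_{x,y}$ really can be written down as a Borel function of $(x,y)$ inside the standard Borel space of $L$-formulas: this amounts to observing that the indexing of sorts, constants, and composition symbols of $L$ by Borel data was precisely set up that way in Construction \ref{construction: describing L_0 depending on X} and Definition \ref{definition: X-expansions}. The main conceptual step is invoking completeness to replace the model-theoretic quantifier ``$T\models\cdot$'' (which a priori ranges over a proper class of models) by an existential quantifier over a standard Borel space of finite proofs; once this is done, preservation of analyticity under Borel images and projections closes the argument. (That $E$ is genuinely an equivalence relation is immediate: reflexivity uses $\alpha=\beta=[i(x)]$ together with axiom (2) of $T_0$, symmetry swaps $\alpha,\beta$, and transitivity uses axiom (1) of $T_0$ to compose witnesses.)
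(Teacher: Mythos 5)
Your proposal is correct and follows essentially the same route as the paper: invoke Gödel's completeness theorem to turn $T\models\varphi_{x,y}$ into the existence of a finite proof, apply Lemma \ref{lemma:set_of_proofs_is_Borel} to get analyticity of the set of proofs, and conclude by projection. Your added observation that $(x,y)\mapsto\varphi_{x,y}$ is Borel is a detail the paper leaves implicit, but it is the right thing to check and follows from Construction \ref{construction: describing L_0 depending on X} as you note.
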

\begin{proof}
    By Gödel's completeness theorem, the condition defining $E$ is equivalent to the existence of a (formal) proof of the sentence (\ref{equation:T_models_equivalence_of_x_y}) from the axioms in $T$. Every proof has finite length, so it is enough to check that the set of proofs of length $n$ that finish on (\ref{equation:T_models_equivalence_of_x_y}) is analytic. This follows from Lemma~\ref{lemma:set_of_proofs_is_Borel}.
\end{proof}

More generally, the same proof gives the following.

\begin{theorem} \label{theorem: abstract analytic condition makes analytic relation}
    Let $T$ be an analytic condition in an $\mathbf{X}$-expansion $L$ of $L_0$. Let $\varphi = \varphi_{i, x, f}$ be an $L$-sentence and let $(i,x,f) \in I^{<\omega} \times X_{\Ob}^{<\omega} \times X_{\Mor}^{<\omega}$ be all the elements that appear in $\varphi$ if we treat it as a string of symbols (and we identify variables in $\Mor(x,y)$ with elements of $(x,y,n) \in X_{\Ob}^2 \times \omega$). 
    
    Consider the subset $E$ of tuples $(i',x',f') \in I^{<\omega} \times X_{\Ob}^{<\omega} \times X_{\Mor}^{<\omega}$ of the same respective lengths as $(i,x,f)$ such that $\varphi_{i', x', f'}$ (i.e., $\varphi$ with $(i,x,f)$ replaced by $(i',x',f')$) is an $L$-sentence and
    \begin{equation*}\label{equation:T_models_equivalence_of_x_y_new}
        T \models \varphi_{i', x', f'}.
    \end{equation*}
    Then $E$ is analytic.
\end{theorem}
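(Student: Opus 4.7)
The plan is to mirror the proof of Theorem \ref{thm:T_equivalence_is_analytic}, but with an extra Borel reduction step to handle the variable tuple $(i',x',f')$ in place of the fixed sentence (\ref{equation:T_models_equivalence_of_x_y}). First I would observe that there is a canonical substitution operation: given the ``template'' $\varphi = \varphi_{i,x,f}$, we obtain $\varphi_{i',x',f'}$ by replacing each occurrence of a constant/parameter from $(i,x,f)$ by the corresponding entry of $(i',x',f')$, and renaming variables in sorts $\Mor(x_j,x_k)$ to variables in the sort $\Mor(x'_j, x'_k)$. This substitution, viewed as a partial map from $I^{<\omega} \times X_{\Ob}^{<\omega} \times X_{\Mor}^{<\omega}$ into the standard Borel space of $L$-strings, is Borel, and the subset on which it produces a genuine $L$-sentence (i.e., where the sort constraints are actually met, so that the resulting string is in the Borel set of $L$-formulas identified in the proof of Lemma \ref{lemma:set_of_proofs_is_Borel}) is Borel as well.

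Next, by Gödel's completeness theorem, the condition $T \models \varphi_{i',x',f'}$ is equivalent to the existence of a finite formal proof of $\varphi_{i',x',f'}$ from $T$. Since $T$ is an analytic condition, Lemma \ref{lemma:set_of_proofs_is_Borel} provides an analytic set $P \subseteq S$ of sequences of $L$-formulas representing proofs in $T$. Consider then the set
\[
\widetilde{E} := \bigl\{ (i',x',f',\pi) : \pi \in P \text{ and the last formula of } \pi \text{ equals } \varphi_{i',x',f'} \bigr\}.
\]
The condition ``$\pi \in P$'' is analytic, the substitution producing $\varphi_{i',x',f'}$ is Borel on its domain, and ``last formula of $\pi$ equals $\varphi_{i',x',f'}$'' is a Borel equality condition in the standard Borel space of $L$-strings. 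Hence $\widetilde{E}$ is analytic.

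Finally, $E$ is the projection of $\widetilde{E}$ onto the first three coordinates, and projections of analytic sets along standard Borel spaces are analytic, so $E$ is analytic. The only mildly delicate step is the first one: verifying that substitution is Borel and that the ``is a well-formed $L$-sentence'' condition on the substituted string is Borel. This reduces to an induction on formula complexity entirely parallel to the one sketched in Lemma \ref{lemma:set_of_proofs_is_Borel}, using that the arity/sort data of symbols in $L$ depends in a Borel way on the indexing parameters from $I$, $X_{\Ob}$, $X_{\Mor}$; I do not expect any genuine obstacle beyond bookkeeping.
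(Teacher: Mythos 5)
Your proposal is correct and follows essentially the same route as the paper, which simply remarks that ``the same proof'' as Theorem \ref{thm:T_equivalence_is_analytic} applies: G\"odel completeness reduces $T \models \varphi_{i',x',f'}$ to the existence of a finite proof, Lemma \ref{lemma:set_of_proofs_is_Borel} gives analyticity of the set of proofs, and projecting out the proof yields analyticity of $E$. Your explicit verification that the substitution $(i',x',f') \mapsto \varphi_{i',x',f'}$ is Borel and that well-formedness of the substituted string is a Borel condition is exactly the bookkeeping the paper leaves implicit, and it is handled correctly.
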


Since our main application is analyticity of the $KK$-equivalence relation, we do not strive for the most general form of the preceding theorem. It seems that there should be a `mixed' logic, with some sorts being infinitary, and some first-order. In such logic, the above result could be interpreted as: if $\varphi(x)$ is a first-order formula with only infinitary free variables $x$, then the set of its realisations is analytic (provided we start with an analytic theory). It would be interesting to see a direct development of such logic and whether it has other applications.

\section{Application to $C^*$-algebras}\label{section: Application}
The desire to classify objects is ubiquitous across mathematical disciplines; here we discuss one particular classification program for norm-closed subalgebras of bounded operators of Hilbert spaces, i.e., $C^*$-algebras. 
See \cite[\S~1.1, \S~1.2]{CGSTW-new-classificationI-2023} for a survey and brief history of this topic. 
We assume familiarity with $C^*$-algebras, whose model theory is laid out in \cite[\S~2.3.1]{farah2014model}, see also \cite{farah2021MTofC*}. 
We start by presenting the unital classification theorem along with the classifying invariant, in order to set out a first-order language suitable to understanding the invariant.

\subsection{Borel parametrizations of $C^*$-algebras}\label{subsec: borel parametrization of C*}

Kechris \cite{Kechrisdescriptivecstar} introduced a standard Borel space parametrization of the space of separable $C^*$-algebras. Farah, Toms and Törnquist \cite{farah2013descriptive, Farahturbulence2014} considered a few other such parametrizations, compared them, and studied Borel complexity of several equivalence relations and invariants appearing in this context. Furthermore, in \cite{farah2021MTofC*} criteria on Borelness of some classes of $C^*$-algebras are given. We survey these results and introduce some notations below.

We follow the presentation of \cite[Section~2.1]{Farahturbulence2014} for the standard Borel structure associated to a given $C^*$-algebra. 
In particular, let $H$ be a separable infinite dimensional Hilbert space and let $\mathcal{B}(H)$ be the space of bounded operators on $H$; note that $\mathcal{B}(H)$ is a standard Borel space when equipped with the Borel structure generated by the weakly open subsets. 
For every $\gamma \in \mathcal{B}(H)^\mathbb{N}$, we can associate $C^*(\gamma)$, the $C^*$-algebra generated by the sequence $\gamma$.
Then since all separable $C^*$-algebras arise as $C^*$-subalgebras of $\mathcal{B}(H)$, the space $\Gamma := \mathcal{B}(H)^\mathbb{N}$ gives a standard Borel parametrization for all separable $C^*$-algebras.
We denote by $\Gamma_u$ the subset consisting of parameters for unital separable $C^*$-algebras.

On the other hand, let $\widehat{\Xi} \subset \omega^{\omega^2} \times \omega^{\omega^2} \times \omega^{\omega} \times \mathbb{R}_{\geq 0}^{\omega} \times (\omega^{\omega})^{\mathbb{Q}(i)}$ be the subset of tuples $A =(+,\times, (-)^*,\|-\|,(\lambda \cdot)_{\lambda \in \mathbb{Q}(i)})$ that define a $C^*$-algebra. 
For the details see \cite[Section 2.4]{Farahturbulence2014}. 
Each $A \in \widehat{\Xi}$ defines a $C^*$-algebra $\widehat{A}$ which is the completion of $\omega$ with respect to the norm $\|-\|$. 
Note that $\widehat{\Xi}$ is a Borel subset of the ambient product, so it is a standard Borel space. 
Farah, Toms, and Törnquist prove that there is a Borel isomorphism $f \colon \Gamma \setminus \{0\} \to \widehat{\Xi}$ such that for every $\gamma \in \Gamma \setminus \{0\}$, the $C^*$-algebra $C^*(\gamma)$ is isomorphic to $\widehat{A}$ for $A = f(\gamma)$. 
Thus, any of these standard Borel spaces can be seen as parametrizing non-trivial separable $C^*$-algebras. These parametrizations are equivalent to the one we discussed in Section \ref{subsection: Borel categories} in the following sense.

\begin{lemma}\label{lemma:our presentation of C* is equivalent to the standard one}
    Let $T_{C^*}$ be the theory from Example \ref{example:language of C*}. There is a Borel isomorphism $g:\Mod_{\omega}(T_{C^*}) \to \widehat{\Xi}$ such that $\widehat{A} \cong \widehat{g(A)}$. 
\end{lemma}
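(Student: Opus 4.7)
The plan is to produce explicit Borel maps in both directions between $\Mod_\omega(T_{C^*})$ and $\widehat{\Xi}$ that preserve the underlying separable $C^*$-algebra, and then upgrade to a Borel isomorphism using standard descriptive set theory.

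For the forward direction, given $A \in \Mod_\omega(T_{C^*})$ the data consists of sorts $B_n^A$ each with universe $\omega$, together with Borel-in-$A$ interpretations of the operations, metric, and inclusion maps $I_{mn}^A$. The pre-$C^*$-algebra whose completion is $\widehat{A}$ is the colimit $C^A := \varinjlim_n B_n^A$ along the inclusions. I would encode this colimit on the universe $\omega$ via a Borel-in-$A$ bijection $\beta_A \colon \omega \to C^A$ obtained by enumerating pairs $(n,k)$ in $\omega^2$ for which $k$ is a ``new'' element of $B_n^A$, i.e.\ $n=1$, or $n \geq 2$ with $k \notin I^A_{n-1,n}(B_{n-1}^A)$. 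Since the inclusion maps are Borel in $A$, the membership condition and hence $\beta_A$ are Borel in $A$. Define $g_1(A) \in \widehat{\Xi}$ by transporting the operations and the norm $\|k\|_{g_1(A)} := d_n^A(\beta_A(k), 0^A)$ (for any $n$ with $\beta_A(k) \in B_n^A$) through $\beta_A$; by construction $\widehat{g_1(A)} \cong \widehat A$.

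For the backward direction, given $B \in \widehat{\Xi}$ the norm gives Borel sets $U_n(B) := \{k \in \omega \colon \|k\|_B \leq n\}$. I would fix Borel-in-$B$ bijections $\gamma_{n,B} \colon \omega \to U_n(B)$ (say, by listing $U_n(B)$ in increasing order, using a canonical padding convention when $U_n(B)$ is finite) and define $g_2(B) \in \Mod_\omega(T_{C^*})$ by interpreting the sort $B_n$ via $\gamma_{n,B}$, transporting all operations through these bijections, and setting $I^{g_2(B)}_{mn}(i) := \gamma_{n,B}^{-1}(\gamma_{m,B}(i))$. This is Borel in $B$ and satisfies $\widehat{g_2(B)} \cong \widehat B$. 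That the axioms of $T_{C^*}$ are satisfied follows because all the transport is along bijections with the genuine balls of a $C^*$-algebra.

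To upgrade to a Borel isomorphism, observe that both $\Mod_\omega(T_{C^*})$ and $\widehat{\Xi}$ are uncountable standard Borel spaces (by Lemma~\ref{lem: set of separable models is Borel} and by construction of $\widehat{\Xi}$, respectively), hence Borel isomorphic by Kuratowski's theorem. To get a Borel bijection respecting the $C^*$-algebra, I would refine $g_1$ and $g_2$ to Borel injections using canonical tie-breaking and then apply the Borel Schröder--Bernstein theorem; since each step of the Schröder--Bernstein construction replaces an element by another representing the same $C^*$-algebra, the resulting Borel bijection $g$ inherits the property $\widehat{g(A)} \cong \widehat A$. The main obstacle is precisely this last step: carrying out Borel Schröder--Bernstein while maintaining the fiber over the isomorphism class requires care, and in practice it is cleaner to engineer the enumerations in $g_1$ and $g_2$ to be sufficiently canonical that $g_2$ becomes a right inverse of $g_1$ on the nose, yielding an explicit Borel bijection without appeal to abstract upgrading.
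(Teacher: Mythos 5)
Your forward map is, in substance, the paper's entire proof: the authors simply take the union of the dense subsets of the $n$-balls encoded by the sorts $B_n^A$, observe it is a countable dense subset of $\widehat{A}$ closed under the operations, and declare this an element of $\widehat{\Xi}$. Your enumeration by ``new'' pairs $(n,k)$ with $k\notin\image I^A_{n-1,n}$ makes the indexing explicit (and is Borel in $A$, since membership in the image of $I^A_{n-1,n}$ is a countable union of Borel conditions, and the $I_{mn}$ are isometric, hence injective, so new pairs biject with the colimit). Your backward map $g_2$ is likewise a correct Borel map with $\widehat{g_2(B)}\cong\widehat{B}$, modulo the routine check that $U_n(B)$ is dense in the $n$-ball of $\widehat{B}$ (rational scaling). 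So you have Borel maps in both directions preserving the encoded algebra up to isomorphism --- which is everything the paper's own three-sentence proof establishes, and everything the lemma is actually used for later (transferring Borelness of invariants and of classes of algebras between the two parametrizations).

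The gap is in the final upgrade to a genuine Borel bijection; you have correctly located it but neither of your two proposed routes closes it. Schr\"oder--Bernstein would indeed preserve the fiber over the isomorphism class, but it needs algebra-preserving Borel \emph{injections} in both directions, and neither $g_1$ nor $g_2$ is injective as defined: $g_1$ irrecoverably forgets the individual enumerations of the sorts $B_n^A$ (only the colimit survives), and $g_2$ forgets how $U_n(B)$ sat inside $\omega$. ``Canonical tie-breaking'' does not supply the missing injection $\Mod_\omega(T_{C^*})\to\widehat{\Xi}$, since it is not clear how to store the forgotten per-sort data in an element of $\widehat{\Xi}$ without changing the algebra it encodes. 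Your fallback --- engineering the enumerations so that $g_2$ is a right inverse of $g_1$ --- yields only a retraction, not a bijection: $g_2\circ g_1\neq\id$ in general, because the $n$-th sort of $g_2(g_1(A))$ is the set of \emph{all} elements of $\bigcup_m B_m^A$ of norm at most $n$, which typically properly contains $B_n^A$. In fairness, the paper's proof does not address bijectivity either, and the statement is stronger than what is proved or needed there; to get the literal claim you would have to exhibit genuinely injective algebra-preserving Borel maps in both directions, or else settle for the pair $(g_1,g_2)$, which suffices for all subsequent applications.
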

\begin{proof}
    An element $A \in \Mod_{\omega}(T_{C^*})$ gives a choice of a dense countable subset in the ball of radius $n$ in $\widehat{A}$, for each $n<\omega$, which is additionally closed under functions (that may increase the ball radii). 
    By taking the union of all these subsets, we get a countable dense subset $g(A) \subseteq \widehat{A}$ closed under all required operations. 
    This finishes the proof. 
\end{proof}

We again state the unital classification theorem, and discuss why the set of simple nuclear $\cZ$-stable $C^*$-algebras form a Borel subset of any standard Borel space of unital separable $C^*$-algebras.

\begin{theorem} \label{thm: classification of C*-algebras}
Unital simple separable nuclear $\cZ$-stable $C^*$-algebras satisfying Rosenberg and Schochet’s universal coefficient theorem are classified by the invariant $KT_u$ consisting of $K$-theory and traces.
\end{theorem}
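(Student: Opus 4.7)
Since Theorem \ref{thm: classification of C*-algebras} is the full unital classification theorem, which is the combined output of several decades of work by many authors, the plan is to invoke the published literature rather than reprove the result. The authors have already flagged in the introduction that they use the packaged version from \cite[Theorem A]{CGSTW-new-classificationI-2023}, so at the formal level this ``proof'' is a one-line citation. What I would write here is a short guiding paragraph that explains to the reader how the theorem decomposes, so the statement does not appear as a black box.

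Concretely, I would split the argument into the two dichotomous cases available for simple separable nuclear $\cZ$-stable $C^*$-algebras. In the purely infinite case, the algebras have no tracial states, so $KT_u$ reduces to the pointed $\mathbb{Z}/2$-graded $K$-theory $(K_0, [1], K_1)$. One invokes the Kirchberg--Phillips theorem \cite{kirchberg1995classif, phillips2000classif}, which shows that two unital Kirchberg algebras are isomorphic if and only if they are $KK$-equivalent, and then the UCT of Rosenberg--Schochet \cite{RosenbergSchochetUCT} to identify $KK$-equivalence with isomorphism of pointed $K$-theory. In the stably finite case, the classification is far deeper: one combines \cite{EGLN-classification, GLN-classif-1, GLN-classif-2, TWW-quasidiagonality}, where traces enter the invariant on an equal footing with $K$-theory and an Elliott-style two-sided approximate intertwining is performed.

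The hardest input historically, and the step I would expect to dominate any honest proof attempt, is quasidiagonality of faithful traces on simple separable nuclear $C^*$-algebras in the UCT class (the Tikuisis--White--Winter theorem \cite{TWW-quasidiagonality}), which is what allows one to lift tracial data to $KK$-morphisms and then feed them into the intertwining machine. The abstract framework of \cite{CGSTW-new-classificationI-2023} reorganizes both halves of the classification around a uniform existence/uniqueness theorem for morphisms between classifiable algebras, which is why I would cite that version: it gives the cleanest single statement that covers both the stably finite and the purely infinite regimes simultaneously, with $KT_u$ as the unified invariant used in the sequel.
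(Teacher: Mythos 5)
Your proposal is correct and matches the paper's treatment: the paper states this theorem without proof, taking it as an external input cited from \cite[Theorem A]{CGSTW-new-classificationI-2023} (as flagged in the introduction), which is exactly what you do. The additional expository decomposition into the purely infinite (Kirchberg--Phillips) and stably finite regimes is accurate but not something the paper includes or needs.
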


Let us a take a brief moment to consider the assumptions of this statement. This class of $C^*$-algebras was briefly discussed in the introduction to this paper; each assumption here is necessary for classification,\footnote{The open question asking if the UCT holds for all nuclear $C^*$-algebras could render the UCT assumption redundant.} and a deeper discussion of each of the assumptions can be found in the introduction of \cite{CGSTW-new-classificationI-2023}. 
For our purposes, recall that our Borel parametrizations are only for separable $C^*$-algebras. 
Simplicity, nuclearity, and $\cZ$-stability were previously studied from a descriptive set-theoretic viewpoint. 
The presence of $\cZ$-stability is Borel computable by \cite[Theorem 1.1(vi)]{farah2013descriptive}, while simplicity and nuclearity are model-theoretically accessible: they are definable by uniform families of formulas \cite[Theorem 5.7.3(3), (6)]{farah2021MTofC*}, and therefore Borel-computable by \cite[Proposition 5.17.1]{farah2021MTofC*}. This establishes the following.

\begin{theorem} \label{thm: USSN Z-stable is Borel}
    The set of separable unital $C^*$-algebras satisfying any combination of the following properties is a Borel set: simple, nuclear, $\cZ$-stable.
\end{theorem}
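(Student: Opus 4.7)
The plan is to handle each of the three properties separately and then invoke the fact that Borel sets are closed under finite intersections, so that any Boolean combination of them remains Borel. Since Lemma \ref{lemma:our presentation of C* is equivalent to the standard one} gives a Borel isomorphism between the parametrization $\Mod_\omega(T_{C^*}')$ used in our framework and the standard parametrizations $\widehat{\Xi}$ (or $\Gamma_u$) from \cite{Farahturbulence2014, Kechrisdescriptivecstar}, it is enough to verify Borelness in any one of these equivalent standard Borel spaces. The first step is to recall this transfer of Borel structure explicitly so that the cited results, which are phrased in the parametrization of \cite{Farahturbulence2014}, apply to $\Mod_\omega(T_{C^*}')$.

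Next, I would dispatch $\cZ$-stability by a direct appeal to \cite[Theorem 1.1(vi)]{farah2013descriptive}, which asserts exactly that the set of separable $\cZ$-stable $C^*$-algebras is Borel in $\Gamma$. Restricting to $\Gamma_u$ preserves Borelness since $\Gamma_u$ is Borel in $\Gamma$ (unitality being a first-order property picked out by a continuous formula).

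For simplicity and nuclearity the strategy is model-theoretic. By \cite[Theorem 5.7.3(3), (6)]{farah2021MTofC*} both properties are axiomatized by a uniform family of $L_{C^*}$-formulas in the sense of \emph{loc.\ cit.} Then \cite[Proposition 5.17.1]{farah2021MTofC*} asserts that any class of $C^*$-algebras cut out by a uniform family of formulas corresponds to a Borel subset of the standard Borel space of separable $C^*$-algebras; applying this twice yields Borelness of the set of simple $C^*$-algebras and of the set of nuclear $C^*$-algebras. The routine check here is that the cited uniform axiomatization survives passing from $T_{C^*}$ to $T_{C^*}'$ (adding the constant $1$ is harmless) and through the Borel isomorphism of Lemma \ref{lemma:our presentation of C* is equivalent to the standard one}.

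Finally, since any combination of these three properties gives the intersection of the corresponding Borel sets, and finite intersections of Borel sets are Borel, the theorem follows. I do not anticipate a genuine obstacle: the content is really a bookkeeping argument gluing together \cite{farah2013descriptive} and \cite{farah2021MTofC*}, and the only point requiring any care is confirming that the uniform-family-of-formulas formalism of \cite{farah2021MTofC*} lines up with our presentation of $\Mod_\omega(T_{C^*}')$, which is exactly what Lemma \ref{lemma:our presentation of C* is equivalent to the standard one} was designed to provide.
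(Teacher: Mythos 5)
Your proposal is correct and follows essentially the same route as the paper: $\cZ$-stability via \cite[Theorem 1.1(vi)]{farah2013descriptive}, simplicity and nuclearity via uniform definability \cite[Theorem 5.7.3(3), (6)]{farah2021MTofC*} combined with \cite[Proposition 5.17.1]{farah2021MTofC*}, and closure of Borel sets under finite intersections. The only difference is that you make the transfer between parametrizations via Lemma \ref{lemma:our presentation of C* is equivalent to the standard one} explicit, which the paper leaves implicit.
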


We now turn our attention to the assumption of Rosenberg and Schochet's UCT \cite{RosenbergSchochetUCT}. 
In their original formulation, the UCT relates the Kasparov group $KK(A, B)$ with the $K$-theory of $A$ and $B$. For the purposes of this paper, we only require the following fact from \cite[Section 7]{RosenbergSchochetUCT}.

\begin{fact} \label{fact: UCT KK-equiv-to-commutative}
    Fix a separable $C^*$-algebra $A$. Then $A$ satisfies the UCT if and only if it is $KK$-equivalent to a commutative $C^*$-algebra.
\end{fact}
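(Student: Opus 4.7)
The reverse implication is the easier direction. I would first observe that for commutative separable $C^*$-algebras $C = C_0(X)$, the UCT sequence
\[ 0 \to \Ext^1_{\mathbb{Z}}(K_*(C), K_{*+1}(B)) \to KK(C,B) \to \Hom(K_*(C), K_*(B)) \to 0 \]
holds by the original Rosenberg--Schochet argument (proved via Brown--Douglas--Fillmore and an Atiyah--Hirzebruch spectral sequence applied to $X$). Then I would argue that satisfying the UCT is a $KK$-invariant property of the first variable: the groups $KK(A,B)$, $\Hom(K_*(A), K_*(B))$, and $\Ext^1(K_*(A), K_*(B))$ all depend on $A$ only through its image in the $KK$-category, since $K$-theory itself factors through $KK$ (using $K_n(A) = KK(C_0(\mathbb{R}^n), A)$). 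Consequently, if $A \simeq_{KK} C$ with $C$ commutative, the UCT sequence for $(A,-)$ is isomorphic to that for $(C,-)$, and $A$ satisfies the UCT.

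For the forward direction, assume $A$ satisfies the UCT. The plan has two steps. First, realize the $\mathbb{Z}/2$-graded countable abelian group $K_*(A) = K_0(A) \oplus K_1(A)$ as the $K$-theory of some commutative $C^*$-algebra $C$. Every countable abelian group arises as $K_0(C_0(Y))$ for some locally compact Hausdorff $Y$, via a Milnor-style mapping-telescope construction applied to a CW-complex realizing a presentation of the group; since suspension flips the grading, taking $C := C_0 \oplus S C_1$ with $K_0(C_i) \cong K_i(A)$ yields a commutative $C^*$-algebra with $K_*(C) \cong K_*(A)$. Second, invoking the UCT for the pair $(A, C)$ (which is the hypothesis on $A$, together with the fact that $K_*(C)$ is countable), the surjection $KK(A,C) \twoheadrightarrow \Hom(K_*(A), K_*(C))$ lifts the chosen isomorphism $\phi \colon K_*(A) \to K_*(C)$ to some $\alpha \in KK(A,C)$.

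The heart of the argument, and the main obstacle, is promoting $\alpha$ from a $K$-theory isomorphism to a genuine $KK$-equivalence. The standard move is a five-lemma argument: for any separable $C^*$-algebra $D$ for which the UCT holds (in particular $D = A$ by hypothesis and $D = C$ since $C$ is commutative), Kasparov product with $\alpha$ induces a morphism between the UCT short exact sequences for $(D, A)$ and $(D, C)$. Since $\phi$ is an isomorphism and both $\Hom(-, -)$ and $\Ext^1(-, -)$ are functorial in the first $K$-theory argument, the outer vertical maps in these diagrams are isomorphisms, and the five-lemma gives that Kasparov product with $\alpha$ is an isomorphism $KK(D, A) \xrightarrow{\sim} KK(D, C)$ for every such $D$. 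Taking $D = C$ produces a two-sided inverse $\beta \in KK(C, A)$, so $\alpha$ is a $KK$-equivalence. The technical subtlety to navigate is the naturality of the UCT sequence in both variables simultaneously: the splitting is not natural, but the short exact sequence itself is, and this naturality in the second variable is precisely what allows the five-lemma to run.
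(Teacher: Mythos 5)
The paper gives no proof of this statement: it is recorded as a Fact with a citation to \cite[\S 7]{RosenbergSchochetUCT}, so there is no in-paper argument to compare yours against. Your proposal is a correct reconstruction of the standard proof from that reference (see also \S 23.10 of Blackadar's $K$-theory book): closure of the UCT class under $KK$-equivalence in the first variable for the backward direction, and, for the forward direction, geometric realization of $K_*(A)$ as $K_*(C)$ for a separable commutative $C$, lifting the isomorphism through the surjection $KK(A,C)\to\Hom(K_*(A),K_*(C))$, and a five-lemma argument using naturality of the UCT sequence in the second variable. The only step worth tightening is the last one: taking $D=C$ yields a priori only a one-sided inverse $\beta$ with $\beta\otimes_A\alpha=1_C$; to conclude $\alpha\otimes_C\beta=1_A$ you also need the case $D=A$, i.e.\ injectivity of $-\otimes_A\alpha\colon KK(A,A)\to KK(A,C)$, combined with associativity of the Kasparov product.
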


As such we now turn our attention to $KK$-equivalence (that we introduce in Definition \ref{definition: KK-equivalence}).
For separable $C^*$-algebras $A, \ B$, the abelian groups $KK(A, B)$ were originally defined by G.G. Kasparov in \cite{Kasparov1981-origKK}; these can be vaguely thought of as a group of generalized homomorphisms between $C^*$-algebras. 
These $KK$-groups have since been studied from a number of viewpoints, but we only require the category-theoretic characterization obtained by Higson in \cite{Higson-KKtheory}.

In this section, we use letters like $\mathcal{A}, \mathcal{C}$ for categories rather than the boldface letters $\mathbf{A}, \mathbf{C}$ that denote Borel categories. Let $\mathcal{C}^*$ denote the category of separable $C^*$-algebras with *-homomorphisms as morphisms (without any Borel structure). In the following definition we use the notion of homotopy between morphisms of $C^*$-algebras. For more details on this notion see the discussion preceding Construction \ref{constr: tensor with C[0,1]}.
\begin{definition} \label{def: KK-functors}
    Let $\mathcal{A}$ be an additive category, and let $F \colon \mathcal{C}^* \rightarrow \mathcal{A}$ be a functor. We call $F$ a $\kappa \kappa$-functor if it satisfies the following three properties:
\begin{enumerate}
    \item $F \colon \mathcal{C}^* \rightarrow \mathcal{A}$ is a homotopy functor, i.e., homotopic morphisms in $\mathcal{C}^*$ are sent to equal morphisms;
    \item $F$ is stable, i.e., the morphism $e_* \colon F(B) \rightarrow F(B\otimes \fancyK)$, induced by the $*$-homomorphism $b \mapsto b \otimes e$ for any rank-one projection $e \in \fancyK$, is invertible;
    \item $F$ is a split exact functor, i.e., if
    \[
        0 \rightarrow J \rightarrow D \rightleftarrows D/J \rightarrow 0 
    \] is a split exact sequence of separable $C^*$-algebras, then the sequence
    \[
        0 \rightarrow F(J) \rightarrow F(D) \rightleftarrows F(D/J) \rightarrow 0
    \]
    is also split exact.
\end{enumerate}
\end{definition}

\begin{theorem}[Theorem 4.5 in \cite{Higson-KKtheory}] \label{thm: KK}
    There is an additive category $\mathcal{KK}$ and a $\kappa \kappa$-functor $KK \colon \mathcal{C}^* \rightarrow \mathcal{KK}$ that is universal among $\kappa \kappa$-functors. More precisely, by universal we mean that for any additive category $\mathcal{A}$ and any $\kappa \kappa$-functor $F \colon \mathcal{C}^* \rightarrow \mathcal{A}$, there is a unique $\widehat{F} \colon \mathcal{KK} \rightarrow \mathcal{A}$ such that $\widehat{F} \circ KK = F$. Furthermore, for $C^*$-algebras $A, B$, the abelian group of morphisms between their images in the category $\mathcal{KK}$ is naturally isomorphic to the Kasparov group $KK(A,B)$, and composition of morphisms corresponds to the Kasparov product.
\end{theorem}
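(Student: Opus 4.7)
The plan is to construct $\mathcal{KK}$ explicitly from Kasparov's bivariant theory, verify that the functor $KK$ satisfies the three $\kappa\kappa$-axioms, and then establish universality via Cuntz's description of $KK$ in terms of quasihomomorphisms. The proof is substantial; I outline the structure below.

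First, I would build $\mathcal{KK}$ directly: objects are separable $C^*$-algebras and morphisms are $\mathcal{KK}(A,B) := KK(A,B)$, the abelian group of equivalence classes of Kasparov $(A,B)$-bimodules, with composition given by the Kasparov product $KK(A,B) \times KK(B,C) \to KK(A,C)$. Additivity of the category is immediate from the abelian-group structure on each hom set together with bilinearity of the Kasparov product. Producing this data already requires substantial work, as constructing the Kasparov product and proving it is well-defined, associative, and bilinear is a foundational result; but it is standard and I would take it as input.

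Next, I would verify the three axioms for the functor $KK \colon \mathcal{C}^* \to \mathcal{KK}$. Homotopy invariance follows because a homotopy $A \to C([0,1], B)$ gives a Kasparov bimodule whose evaluations at $0$ and $1$ represent the same class. Stability follows from the Morita equivalence $B \sim B \otimes \fancyK$ induced by any rank-one projection, which identifies the corresponding $KK$-groups. Split exactness is proved by constructing, given a splitting $s \colon D/J \to D$, an explicit inverse to the canonical map $KK(-,J) \oplus KK(-,D/J) \to KK(-,D)$ using $s$ together with the ideal inclusion.

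The heart of the proof is universality. Given any $\kappa\kappa$-functor $F \colon \mathcal{C}^* \to \mathcal{A}$, define $\widehat{F}$ on objects by $\widehat{F}(A) := F(A)$. To define $\widehat{F}$ on morphisms, one uses Cuntz's realization of $KK(A,B)$ via quasihomomorphisms, namely pairs $\phi = (\phi^+, \phi^-) \colon A \rightrightarrows M(B \otimes \fancyK)$ with $\phi^+(a) - \phi^-(a) \in B \otimes \fancyK$ for all $a \in A$. Such a quasihomomorphism assembles into a naturally associated split exact sequence, and applying $F$ together with the stability isomorphism $F(B) \cong F(B \otimes \fancyK)$ extracts a morphism $F(A) \to F(B)$ in $\mathcal{A}$. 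The main obstacle is showing this assignment is well-defined and functorial: homotopic quasihomomorphisms and degenerate Kasparov bimodules must yield the same morphism in $\mathcal{A}$, and the assignment must respect the Kasparov product so that composition is preserved. Both steps rest on reducing to split exactness via auxiliary mapping-cone constructions, as in Higson's original argument, and this is the technically most delicate part of the proof. Uniqueness of $\widehat{F}$ is comparatively easy: every class in $KK(A,B)$ is, after stabilization by $\fancyK$, represented by an honest $\ast$-homomorphism, so $\widehat{F}$ is forced by its prescribed values on $\mathcal{C}^*$ together with the naturality imposed by the three axioms.
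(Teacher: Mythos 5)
The paper does not actually prove this statement: it is imported verbatim as Theorem 4.5 of Higson's work \cite{Higson-KKtheory}, so there is no internal argument to compare yours against. Your outline is, in substance, a faithful sketch of the standard proof from the literature (essentially Higson's own): build $\mathcal{KK}$ with Kasparov bimodule classes as morphisms and the Kasparov product as composition, check that homotopy invariance, stability, and split exactness hold for $KK$ itself, and obtain universality by factoring an arbitrary $\kappa\kappa$-functor $F$ through the Cuntz quasihomomorphism picture, using split exactness plus the stability isomorphism to extract a morphism $F(A)\to F(B)$ from a quasihomomorphism; the delicate points you identify (well-definedness under homotopy and compatibility with the Kasparov product) are indeed where the real work lies. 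One imprecision worth flagging is in your uniqueness step: it is not true that every class in $KK(A,B)$ is represented by an honest $\ast$-homomorphism $A\to B\otimes\mathcal{K}$ after stabilization; the Cuntz picture gives $KK(A,B)\cong [qA,\,B\otimes\mathcal{K}]$ for the auxiliary algebra $qA$, not for $A$ itself. The correct statement forcing uniqueness of $\widehat F$ is that every morphism of $\mathcal{KK}$ is generated, under composition, group operations, and the inverses that the three axioms force to exist, by images of genuine $\ast$-homomorphisms. With that adjustment your outline is the standard argument and is correct.
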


\begin{definition} \label{definition: KK-equivalence}
    We call two $C^*$-algebras \textit{$KK$-equivalent}, if they become equivalent after passing to the $\mathcal{K}\mathcal{K}$ category. Stated differently, this means that their images by any $\kappa\kappa$-functor become equivalent.
\end{definition}

\subsection{The invariant classifying $C^*$-algebras}\label{subsec: invariant} 
For a unital separable $C^*$-algebra $A$, the invariant $KT_u(A)$ given in the unital classification theorem consists of unordered operator-algebraic $K$-theory, along with the space of all traces on $A$ and a pairing map which relates traces and $K$-theory. 
We set out a quick description of the components of the invariant to formalize $KT_u(\cdot)$ in first-order logic; those interested in a more thorough introduction of it, see \cite[\S 2.1]{CGSTW-new-classificationI-2023}.

Operator-algebraic $K$-theory consists of a functor $K_* = (K_0, K_1)$ from $C^*$-algebras to pairs of abelian groups. 
An element $p \in A$ is called a projection if $p^2=p^*=p$. 
Let $P_{n}(A)$ be the set of projections in $M_n(A)$ (which can be equipped with a unique $C^*$-algebra structure), and let $P_{\infty}(A)$ be the disjoint union of $P_n(A)$'s. 
Consider the Murray-von Neumann equivalence relation $\sim_0$ on $P_{\infty}(A)$ given by $p \sim_0 q \iff p = v^*v \wedge q=vv^*$ for some $v \in M_{m, n}(A)$, if $p \in M_n(A), q \in M_m(A)$.
We also define 
$p \oplus q = \begin{pmatrix}
    p \ 0 \\
    0 \ q
\end{pmatrix}$ for $p,q \in P_{\infty}(A)$. 
Then $P_{\infty}(A)/\sim_0$ becomes an abelian semigroup with $\oplus$, and its Grothendieck group is denoted by $K_0(A)$.
$K_1(A)$ is an abelian group that can be defined as $K_0(SA)$, where $SA := C_0((0,1),A)$ (there is a unique $C^*$-algebra structure on $SA$, although the definition of $K_0$ of a non-unital $C^*$-algebra differs slightly from the one we described).
The space of traces $T(A)$ is a compact convex set of all tracial states on $A$; a tracial state is a unital positive\footnote{i.e., taking elements of the form $b^*b$ to non-negative numbers} linear functional $\tau \colon A \rightarrow \mathbb{C}$ which additionally satisfies $\tau(ab) = \tau(ba)$ for all $a,b \in A$. 
We equip $T(A)$ with the weak-$*$ topology and denote by $\Aff T(A)$ the space of affine continuous real valued functions on $T(A)$.
Finally, we define a group homomorphism $\rho_A \colon K_0(A) \rightarrow \Aff T(A)$ 
specified by
\[
    \rho_A([p]_0 - [q]_0) (\tau) := \tau_n(p-q),
\]
where $\tau_n := \tau \otimes \operatorname{Tr}_n$ is the induced (non-normalized) trace on $M_n(A) \cong A \otimes M_n(\mathbb{C})$ and $[p]_0$ is the equivalence class of $p$ with respect to the Murray-von Neumann equivalence.

The pairing map induces an order on $K_0(A)$ where the positive cone of $K_0(A)$ is defined by:
\[
    x \geq 0 \iff \rho_A(x) (\tau) \geq 0 \, \text{ for all } \tau \in T(A).
\]

\begin{definition}{\cite[Definition 2.3]{CGSTW-new-classificationI-2023}}
    $KT_u$ is the functor on unital separable $C^*$-algebras which assigns to $A$ the quadruple
    \[
        KT_u(A) = (K_*(A), [1_A]_0, (\Aff T(A), (\Aff T(A))_+, u), \,  \rho_A),
    \]
    where $(\Aff T(A))_+$ is the positive cone of affine functionals over $T(A)$ and $u=1$ is the order unit of $\Aff T(A)$.
\end{definition}

The target space for the functor $KT_u$ consists of tuples $((G_0, G_1), c, (X, X_+, u), \rho)$, where $G_0, G_1$ are abelian groups, $c \in G_0$ is a distinguished element, $X$ is an ordered real vector space with an order unit $u$ (see Example \ref{example:language Lous}), and $\rho \colon G_0 \rightarrow X$ is a linear map taking $c$ to $u$.
\begin{definition}
    We define a metric first-order language $\Lang_{\inv}$ that consists of:
\begin{itemize}
    \item 2 sorts $G_0, G_1$ and a family of sorts $B_n$ for $n \in \mathbb{N}$;
    \item For the sorts $G_0$ and $G_1$ the language $L_{ab} = \{ +, 0 \}$ of abelian groups with identity;
    \item For the sort $G_0$, a constant symbol $c$;
    \item the language $L_{ous}$ for the sorts $B_n$;
    \item A family of predicate symbols $\rho_n \colon G_0 \rightarrow B_n$.
\end{itemize}
\end{definition}
For a unital $C^*$-algebra $A$, we can interpret $KT_u(A)$ as an $\Lang_{\inv}$-structure by interpreting $G_0$ as $K_0(A)$ with the distinguished element $c$ interpreted as $[1_A]_0$, $G_1$ as $K_1(A)$, $B_n$ as the radius $n$ ball in $\Aff T(A)$ with $P_n$ being the distance from the positive cone (see Example \ref{example:language Lous}), and $\rho_n$ as a truncated pairing map given by the composition of $\rho_A \colon K_0(A) \to \Aff T(A)$ with a retraction of $\Aff T(A)$ onto its radius $n$ ball (which sends $x$ to $nx/\max(\|x\|,n)$).
Sorts $G_0, G_1$ are equipped with the discrete metric, and $B_n$'s are equipped with the metric coming from the supremum norm on $\Aff T(A)$. We leave the determination of moduli of continuity of symbols in $\Lang_{\inv}$ to the reader.

Recall from Section \ref{sec: infinitary metric logic} that $\Mod_\omega(\Lang_{\inv})$ denotes the Borel parametrization of all separable $\Lang_{\inv}$-structures.
In order to apply the results of Section \ref{subsection: barwise}, we must verify that the invariant $KT_u$ can be lifted to a Borel function.
Note that for our main application we do not need to know Borelness of the functor between the underlying Borel categories. We use the following.
\begin{theorem}[Theorem 3.3, \cite{farah2013descriptive}]
    There is a Borel space structure on the space of Elliott invariants $\mathbf{Ell}$ such that the map $Ell \colon \Gamma_u \rightarrow \mathbf{Ell}$ which sends the encoding $\gamma$ of a unital separable $C^*$-algebra to $Ell(C^*(\gamma))$ is Borel.
\end{theorem}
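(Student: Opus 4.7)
The plan is to build $\mathbf{Ell}$ coordinate-wise, mirroring the explicit description of the invariant $(K_0(A),K_0(A)_+,[1_A]_0,K_1(A),T(A),\rho_A)$, and then to verify Borelness of each component of $\gamma\mapsto Ell(C^*(\gamma))$ separately. For the group components, countable pointed ordered abelian groups are parametrized by a Borel subset of $\omega^{\omega\times\omega}\times 2^\omega\times\omega$, consisting of tuples (operation, positive cone, distinguished element) satisfying the visibly Borel axioms; $K_1$ is encoded analogously. For the trace simplex $T(A)$, one uses that $T(A)\subseteq A^*$ is weak-$*$ compact convex and encodes it either via the Effros Borel structure on compact convex subsets of a fixed Polish ambient, or by recording the values of a Borel-selected dense sequence of traces on a fixed countable dense subset of $A$. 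The pairing $\rho_A\colon K_0(A)\to\Aff T(A)$ is added as coordinate data, determined by its values on a generating sequence of projections.

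Next I would verify Borelness of each coordinate of the invariant map. For each fixed $n$, projections in $M_n(C^*(\gamma))$ form a Borel subset of $M_n(\mathcal{B}(H))$, since self-adjointness and idempotence are closed conditions. Murray-von Neumann equivalence is a priori analytic, but a standard $C^*$-perturbation argument (close projections are unitarily equivalent via functional calculus) lets one witness it by countable families of $\varepsilon$-approximate partial isometries drawn from a Borel-selected countable dense subset of $C^*(\gamma)$, turning the relation into a Borel condition. The Grothendieck construction, the positive cone, and the selection of $[1_{C^*(\gamma)}]_0$ then depend Borel measurably on $\gamma$. The $K_1$ component is handled analogously, replacing projections with unitaries and Murray-von Neumann equivalence with homotopy. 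For $T(C^*(\gamma))$, a Kuratowski-Ryll-Nardzewski selector applied to the Borel multifunction $\gamma\mapsto T(C^*(\gamma))$ (viewed inside the unit ball of the dual of a fixed separable operator system) produces a Borel-indexed dense sequence of traces; the pairing $\rho$ is then determined by its values on representatives of Murray-von Neumann classes, hence inherits Borelness from the previous step.

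The main obstacle I anticipate is upgrading the a priori analytic (existential) descriptions of Murray-von Neumann equivalence, of the Grothendieck identifications, and of unitary homotopy to explicitly Borel ones. The key tool is separability combined with uniform $C^*$-perturbation estimates: the existential quantifiers over partial isometries or homotopy paths can be replaced by countable quantification over a Borel-selected dense subset at controlled $\varepsilon$-precision, turning the relevant relations into countable Boolean combinations of closed and open conditions. A secondary technical point is arranging a single Polish ambient for the trace simplices $T(C^*(\gamma))$ as $\gamma$ varies, so that the trace component of $\mathbf{Ell}$ forms a single standard Borel space; this can be achieved by embedding each $T(A)$ into the space of $1$-Lipschitz functions on a fixed countable dense subset of the unit ball of $A$, which is itself encoded uniformly in $\gamma$.
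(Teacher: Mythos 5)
The paper gives no proof of this statement at all: it is imported verbatim as Theorem~3.3 of \cite{farah2013descriptive} (Farah--Toms--T\"ornquist), and is used as a black box in the proof of Corollary~\ref{corollary: KT_u is a Borel invariant}. So there is no in-paper argument to compare against; the relevant comparison is with the cited source, and there your outline follows essentially the same route: parametrize each coordinate of the invariant by an explicit standard Borel space (ordered pointed abelian groups as subsets of $\omega^{\omega\times\omega}\times 2^\omega\times\omega$, trace simplices as compact convex subsets of a fixed Polish ambient --- FTT use the Hilbert cube $\Delta^{\mathbb N}$, which is what the notation $\Kconv$ in Section~\ref{subsec: invariant} refers to), and then upgrade the a priori analytic descriptions of Murray--von Neumann equivalence and unitary homotopy to Borel ones via the standard perturbation lemmas (close projections are equivalent, close unitaries are homotopic) together with countable dense selections.

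That said, as written this is a proof plan rather than a proof, and the deferred steps are exactly where the content of the theorem lies. Two points deserve flagging. First, the claim that idempotence is a closed condition in $M_n(\mathcal B(H))$ is not literally correct for the weak topology (multiplication is not jointly weakly continuous); the set of projections is Borel, but this already requires the FTT-style verification that the algebraic operations are Borel on bounded balls. Second, and more substantially, every occurrence of ``Borel-selected countable dense subset'' --- of projections in $M_n(C^*(\gamma))$, of unitaries, and of tracial states in $T(C^*(\gamma))$ --- hides a genuine measurable-selection argument: one must show that the relevant multifunctions of $\gamma$ are Borel (e.g.\ in the Effros structure) before Kuratowski--Ryll-Nardzewski applies, and for the trace simplex this is a nontrivial lemma in \cite{farah2013descriptive} (the analogue of which this paper reuses in the proof of Corollary~\ref{corollary: KT_u is a Borel invariant}). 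Your sketch correctly identifies these as the obstacles, but asserting that they can be overcome is not the same as overcoming them; a complete write-up would need to supply those selection arguments explicitly.
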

The Elliott invariant is historically used as the invariant in the classification theorem; it is simply $KT_u$, with the extra information of an order on the $K_0$ group.

\begin{corollary} \label{corollary: KT_u is a Borel invariant}
    There is a Borel function $KT_u \colon \Mod_{\omega}(T_{C^*}') \to \Mod_{\omega}(\Lang_{\inv})$ such that $KT_u(A)$ encodes the $KT_u$-invariant of the $C^*$-algebra $\widehat{A}$.
\end{corollary}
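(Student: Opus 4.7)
The plan is to factor $KT_u$ through the Borel parametrization of Elliott invariants from \cite{farah2013descriptive}. First, I would use the restriction of Lemma \ref{lemma:our presentation of C* is equivalent to the standard one} to the unital setting to obtain a Borel map $\Mod_{\omega}(T_{C^*}') \to \Gamma_u$ sending each $A$ to a countable generating sequence of $\widehat{A}$: composing the Borel isomorphism $\Mod_\omega(T_{C^*}) \to \widehat{\Xi}$ with a Borel inverse of the Farah--Toms--T\"ornquist isomorphism $\Gamma \setminus \{0\} \to \widehat{\Xi}$ produces such a map in the non-unital setting, and restricting to unital structures preserves Borelness (the condition of admitting a unit is Borel in $\Mod_{\omega}(T_{C^*})$ and picked out exactly by the extra constant symbol of $T_{C^*}'$).

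Second, composing with the Borel map $\Ellmap \colon \Gamma_u \to \mathbf{Ell}$ furnished by \cite[Theorem 3.3]{farah2013descriptive} reduces the problem to constructing a Borel map $\mathbf{Ell} \to \Mod_\omega(\Lang_{\inv})$ that forgets the order on $K_0$ and re-encodes the remaining data as an $\Lang_{\inv}$-structure. The $K$-theoretic components $(K_0(A),[1_A]_0)$ and $K_1(A)$ are countable discrete abelian groups, so they embed directly into the sorts $G_0, G_1$; forgetting the positive cone on $K_0$ is trivially Borel.

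The non-trivial part is producing, Borel-uniformly in the invariant, a countable dense pre-substructure of $\Aff T(A)$ closed under addition, rational scalar multiplication and the inclusions $B_m \hookrightarrow B_n$, together with values of the distance-to-positive-cone predicates $P_n$ and the truncated pairings $\rho_n$. Since $\Aff T(A)$ is a separable Banach space, I would start from a Borel selection of a countable dense sequence (available from the parametrization of $\mathbf{Ell}$ in \cite{farah2013descriptive}) and close it under $\mathbb{Q}$-linear combinations; the values of $d_n$, $P_n$ and $\rho_n$ on this enumeration are then Borel-computable from the Elliott invariant. Restricting the enumeration to each ball $B_n$ and encoding the data as in Section \ref{sec: infinitary metric logic} produces the desired element of $\Mod_\omega(\Lang_{\inv})$.

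The main obstacle is verifying that the parametrization of the tracial component of $\mathbf{Ell}$ used in \cite{farah2013descriptive} is rich enough to recover, in a Borel manner, both the norm and positive-cone structure on $\Aff T(A)$ and the pairing $\rho_A$ on the chosen dense subset; this is essentially bookkeeping once the parametrization is unpacked. After this check, the composition $\Mod_\omega(T_{C^*}') \to \Gamma_u \to \mathbf{Ell} \to \Mod_\omega(\Lang_{\inv})$ is a Borel function realising $KT_u$ at the level of $\Mod_\omega$.
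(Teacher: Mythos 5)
Your proposal is correct and follows essentially the same route as the paper: reduce via Lemma \ref{lemma:our presentation of C* is equivalent to the standard one} to constructing a Borel map $\mathbf{Ell} \to \Mod_\omega(\Lang_{\inv})$, handle the $K$-groups directly, and obtain a countable dense $\mathbb{Q}$-subspace of $\Aff T(A)$ by a Borel selection (the paper invokes Kuratowski--Ryll-Nardzewski applied to the subspace $\Psi(T) \subseteq C(\Delta,\mathbb{R})$ from \cite{Farahturbulence2014}). The step you defer as bookkeeping --- recovering the pairing $\rho_A$ Borel-uniformly by unpacking how $\Psi$ and $\Pairing(T,G_0)$ are built in \cite{farah2013descriptive, Farahturbulence2014} --- is where the paper spends most of its effort, but your identification of it as the one remaining obstacle is accurate and the check does go through.
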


Since this is only a translation of results from \cite{farah2013descriptive, Farahturbulence2014} to our setting, in the following proof we freely use notations and definitions from loc.cit.

\begin{proof}
    By Lemma \ref{lemma:our presentation of C* is equivalent to the standard one} and \cite[Lemma 3.15]{Farahturbulence2014}, it is enough to prove that there is a Borel map $\mathbf{Ell} \rightarrow \Mod_\omega(\Lang_{\inv})$ that sends
    \[
        (G_0,G_1,T,r) \in \mathbf{\Ell} \subseteq \mathbf{G}'_{\operatorname{ord}} \times \mathbf{G}_a \times \Kconv \times \bigsqcup_{n \in \mathbb{N} \cup \{ \mathbb{N} \}} C(\Delta^{\mathbb{N}}, \Delta^n),
    \]
    to some $A \in \Mod_\omega(\Lang_{\inv})$ that represents the same invariant. 
    It is clear that there is a Borel function $\alpha \colon \mathbf{G}'_{\operatorname{ord}} \to \Mod_{\omega}(L_{ab})$, where $L_{ab}' = \{0, +, c\}$ is the language of abelian groups with a constant, such that $G_0$ is sent to a code of the corresponding abelian group with a constant. 
    Similarly, there is a Borel isomorphism $\beta \colon \mathbf{G}_a \to \Mod_{\omega}(L_{ab})$ of these Borel parametrizations of the category of countable abelian groups.

    Now we argue that there is a Borel map $\delta \colon \Kconv \to \Mod_{\omega}(L_{ous})$ sending $T$ to a code for the separable $L_{ous}$-structure coming from the Archimedean order unit space $\Aff(T)$. 
    By \cite[Lemma 4.4]{Farahturbulence2014} there is a Borel map $\Psi \colon \Kconv \to S\bigl(C(\Delta, \mathbb{R})\bigr)$ that maps a compact convex set $K \subseteq \Delta^{\mathbb{N}}$ to a closed subspace $\Psi(K)$ of the Banach algebra $C(\Delta, \mathbb{R})$ (equipped with the supremum norm), that is isomorphic to $\Aff(K)$. 
    By the Kuratowski–Ryll-Nardzewski measurable selection theorem, there are Borel maps
    \[ f_n \colon S\bigl(C(\Delta, \mathbb{R})\bigr) \to C(\Delta, \mathbb{R}), \]
    such that for every $X \in S\bigl(C(\Delta, \mathbb{R})\bigr)$ (i.e., for every closed subspace $X \subseteq C(\Delta, \mathbb{R})$), the set $\{ f_n(X) \colon n \in \mathbb{N} \}$ is dense in $X$.
    Thus 
    \[ \{ F(f_1(\Psi(K)), \dots, f_k(\Psi(K))) \colon F \in \mathbb{Q}[x_1, x_2, \dots]_{\deg \leq 1} , k \in \mathbb{N} \}\] is a countable dense $\mathbb{Q}$-subspace of $\Psi(K)$. 
    By choosing a bijection between $\mathbb{Q}[x_1, x_2, \dots]_{\deg \leq 1}$ and $\omega$, we get the desired map $\delta \colon \Kconv \to \Mod_{\omega}(L_{ous})$.
    
    The last thing to check is that computing the pairing $\rho$ from a tuple $(G_0,G_1,T,r)$ is Borel. 
    Recall that the map $\Psi \colon \Kconv \to S\bigl(C(\Delta, \mathbb{R})\bigr)$ from \cite[Lemma 4.4]{Farahturbulence2014} is constructed as follows:
    \begin{itemize}
        \item a continuous map $\chi' \colon \Kconv \to C(\Delta^{\mathbb{N}}, \Delta^{\mathbb{N}})$ is constructed (note that in \cite[Lemma 3.1]{farah2013descriptive} this map is called $\Psi$), such that $\chi'(K)$ is a continuous retraction onto $K$;
        \item a continuous surjective map $\eta \colon \Delta \to \Delta^{\mathbb{N}}$ is chosen;
        \item $\chi \colon \Kconv \to C(\Delta, \Delta^{\mathbb{N}})$ is defined by $\chi(K) = \chi'(K) \circ \eta$;
        \item one puts
        \[ \mathcal{Y} := \{ (K,f) \in \Kconv \times C(\Delta, \mathbb{R}) \colon f = g \circ \chi(K) \textnormal{ for some } g \in \Aff(K) \} \]
        and defines $\Psi(K) := \{ f \colon (K,f) \in \mathcal{Y} \}$.
    \end{itemize}
    By \cite[Definition 3.2]{farah2013descriptive}, $r \in \Pairing(T,G_0)$, which means that $r = r' \circ \chi'(T)$, for some $r' \colon T \to \States(G_0) \subseteq \mathbb{R}^{n}$ (where $n$ corresponds to the cardinality of the group encoded by $G_0$). We define $\rho = \rho_{(G_0,G_1,T,r)} \colon n \to \Psi(T) \subseteq C(\Delta, \mathbb{R})$ by
    \[ \rho(g)  = \proj_g \circ \ r \circ \eta, \]
    where $\proj_g \colon \mathbb{R}^n \to \mathbb{R}$ is given by the projection onto $g$'th factor (for $g \in n$ that we think of as an element of the group encoded by $G_0$). 
    Note that the functions
    \[\begin{tikzcd}
    	{C(\Delta^{\mathbb{N}}, \Delta^{n})} & {C(\Delta^{\mathbb{N}}, \mathbb{R})} & {C(\Delta, \mathbb{R})}
    	\arrow["{\proj_g \circ (-)}", from=1-1, to=1-2]
    	\arrow["{(-) \circ \eta}", from=1-2, to=1-3]
    \end{tikzcd}\]
    are continuous, hence, for $g \in n$, the partial map $\mu \colon \Ell \to C(\Delta, \mathbb{R})$ sending $(G_0,G_1,T,r)$ to $\rho(g)$ is Borel.
    Thus, for each $F \in \mathbb{Q}[x_1, x_2, \dots]_{\deg \leq 1}$ and $g \in n$ the map $\nu \colon \mathbf{Ell} \to \mathbb{R}$ taking $(G_0,G_1,T,r)$ to the distance between $\rho(g)$ and $F(f_1(\Psi(T)), \dots, f_k(\Psi(T)))$, in the Banach space $C(\Delta, \mathbb{R})$, is Borel. Using maps $\alpha, \beta, \delta$ and $\nu$'s, one can then produce $KT_u \colon \Mod_{\omega}(T_{C^*}') \to \Mod_{\omega}(\Lang_{\inv})$ with the desired property.
\end{proof}

\subsection{Borel properties of separable $C^*$-algebras}\label{section:Borel_properties_of_C_star_alg}

Let $\Cstar = \bfMod_\omega(T_{C^*})$ denote the Borel category of $C^*$-algebras. Since the theory $T_{\textnormal{$C^*$}}$ from Example \ref{example:language of C*} forces the metric to be determined by the norm $\|-\|$ and sorts to correspond to balls with respect to $\|-\|$, we will assume that $\Cstar_{\Ob}$ is the set of tuples
\[ (+,\times,(-)^{*}, \|-\|, (\lambda \cdot)_{\lambda \in \mathbb{Q}(i)}) \in  \omega^{\omega^2} \times \omega^{\omega^2} \times \omega^{\omega} \times \mathbb{R}_{\geq 0}^{\omega} \times (\omega^{\omega})^{\mathbb{Q}(i)}\]
that induce a $C^*$-algebra structure on the completion of $\omega$ with respect to the metric $d(a,b) = \|a-b\|$. 
Similarly, we encode a morphism $\widehat{f} \colon \widehat{A} \rightarrow \widehat{B}$ between $C^*$-algebras corresponding to $A, B \in \Cstar_{\Ob}$, by a triple $(A, B, F)$, where $F(a,b) := d_B(\lim_n f_n(a), b)$ for some representative sequence $(f_n)$ converging to $\widehat{f}$ (in the sense of Lemma \ref{lemma: Cauchy sequences X_1 Borel}).
In other words, we identify $\Cstar_{\Ob}$ with $\hat{\Xi}$ introduced in Section~\ref{subsec: borel parametrization of C*} (using Lemma \ref{lemma:our presentation of C* is equivalent to the standard one}) and we use conventions from Section \ref{subsection: Borel categories}. For applications, we need to know that certain classes of morphisms in $\Cstar_{\Mor}$ are Borel.
\begin{lemma} \label{lem: injective Borel}
    The set of injective morphisms in $\Cstar_{\Mor}$ is Borel. 
\end{lemma}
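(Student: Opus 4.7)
The plan is to exploit the classical $C^*$-algebraic fact that a $\ast$-homomorphism $\phi\colon \widehat{A} \to \widehat{B}$ between $C^*$-algebras is injective if and only if it is isometric. Since the encoded map $\widehat{f}$ is continuous and $\omega$ is dense in $\widehat{A}$, isometry on all of $\widehat{A}$ is equivalent to isometry on $\omega$ alone, so the injectivity criterion becomes
\[
\widehat{f} \text{ is injective} \iff \forall a \in \omega,\ \|\widehat{f}(a)\|_B = \|a\|_A.
\]

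To convert this into a Borel condition on the encoding $(A,B,F) \in \Cstar_{\Mor}$, I would use the defining property of the encoding from Lemma \ref{lemma:Borel_cat_of_L_structures}, namely $F(a,b) = d_B(\widehat{f}(a), b)$ for $(a,b) \in \omega^2$. Setting $b$ equal to the element $0^B \in \omega$ encoding the zero of $\widehat{B}$ gives $F(a, 0^B) = d_B(\widehat{f}(a), 0) = \|\widehat{f}(a)\|_B$. The element $0^B$ depends on $B$ in a Borel fashion, since in the encoding $\Cstar_{\Ob} \subseteq \omega^{\omega^2} \times \omega^{\omega^2} \times \omega^{\omega} \times \mathbb{R}_{\geq 0}^{\omega} \times (\omega^{\omega})^{\mathbb{Q}(i)}$, one can take $0^B = (0\cdot)_B(c)$ for any fixed $c \in \omega$. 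The value $\|a\|_A$ is read directly off the $\|-\|$ component of $A$, and hence is Borel in $A$.

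It follows that the set of injective morphisms equals
\[
\bigl\{ (A, B, F) \in \Cstar_{\Mor} : \forall a \in \omega,\ F(a, 0^B) = \|a\|_A \bigr\},
\]
which is a countable intersection over $a \in \omega$ of Borel conditions on $(A,B,F)$, and is therefore Borel. The only non-routine input is the $C^*$-algebraic equivalence between injectivity and isometry; this lets one reduce an a priori statement about all of $\widehat{A}$ to a countable family of conditions on the dense countable subset $\omega$, which is what makes Borelness accessible from the encoding.
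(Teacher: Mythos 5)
Your proof is correct and follows essentially the same route as the paper: both reduce injectivity to the classical fact that an injective $*$-homomorphism between $C^*$-algebras is automatically isometric (since the inverse on the closed image is again contractive), and then express isometry on the dense countable set $\omega$ as the countable Borel conjunction $\bigwedge_{a\in A} F(a,0)=\norm{a}$. Your additional care in observing that the element $0^B$ can be extracted from the encoding in a Borel way is a harmless refinement of a detail the paper leaves implicit.
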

\begin{proof}
    Let $I$ be the collection of all $(A, B, F) \in \Cstar_{\Mor}$ such that the induced morphism $\widehat{f} \colon \widehat{A} \to \widehat{B}$ is injective.
    Recall that $*$-homomorphisms between $C^*$-algebras are contractive. As $\widehat{f}$ is injective, and images of $C^*$-algebras under $*$-homomorphisms are closed, we may consider the $*$-homomorphism $\widehat{f}^{-1} \colon \widehat{f}(\widehat{A}) \rightarrow \widehat{A}$. Thus both $\widehat{f}$ and $\widehat{f}^{-1}$ are contractive, i.e., $\widehat{f}$ is an isometry. Then the set $I$ is captured by the Borel condition
    \[  
        \bigwedge_{a \in A} F(a,0) = \norm{a}.   \qedhere
    \]
\end{proof}

\begin{lemma}
    The set of surjective morphisms in $\Cstar_{\Mor}$ is Borel.
\end{lemma}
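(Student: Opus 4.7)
The plan is to follow the same strategy as the preceding injectivity lemma, expressing surjectivity of the induced map $\widehat{f}\colon\widehat{A}\to\widehat{B}$ as a countable combination of open conditions on the Borel encoding $(A,B,F)$. The starting point is the standard $C^*$-algebraic fact that every $*$-homomorphism between $C^*$-algebras has closed image: one factors $\widehat{f} = \iota \circ \pi$, where $\pi$ is the quotient onto $\widehat{A}/\ker(\widehat{f})$ and $\iota$ is an injective, hence isometric, $*$-homomorphism, so the image $\iota(\widehat{A}/\ker(\widehat{f})) = \widehat{f}(\widehat{A})$ is complete and therefore closed in $\widehat{B}$. Since morphisms in $\Cstar_{\Mor}$ are precisely (contractive) $*$-homomorphisms on completions (as the only relation symbol in $L_{C^*}$ is the metric, and homomorphisms commute with $+,\times,{}^*,\lambda\cdot$), this applies to every element of $\Cstar_{\Mor}$.

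With closedness of $\widehat{f}(\widehat{A})$ in $\widehat{B}$ in hand and density of $B\subseteq\widehat{B}$, surjectivity of $\widehat{f}$ is equivalent to $B\subseteq \widehat{f}(\widehat{A})$, which is in turn equivalent to the assertion that every $b\in B$ lies in the closure of $\widehat{f}(A)$ in $\widehat{B}$. Unwinding, this means: for every $b\in\omega$ and every $\varepsilon>0$ there is $a\in\omega$ with $d_{\widehat{B}}(\widehat{f}(a),b)<\varepsilon$. Under the encoding from Lemma \ref{lemma:Borel_cat_of_L_structures}, where $F(a,b) = d_{\widehat{B}}(\widehat{f}(a),b)$, this translates into the condition
\[
    \bigwedge_{b\in\omega}\bigwedge_{\varepsilon>0}\bigvee_{a\in\omega} F(a,b) < \varepsilon,
\]
with $\varepsilon$ ranging over $\{1/n : n\in\omega\}$.

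Each basic condition $F(a,b)<\varepsilon$ is open in the product topology on $\mathbb{R}^{\omega\times\omega}$, hence Borel in the ambient space containing $\Cstar_{\Mor}$. Since the displayed formula uses only countable conjunctions and disjunctions of such conditions, it cuts out a Borel set, and intersecting with the Borel set $\Cstar_{\Mor}$ (from Lemma \ref{lemma:Borel_cat_of_L_structures} via Proposition \ref{proposition:Borel_category_of_L_structures}) yields the desired Borel subset of surjective morphisms. I do not anticipate any real obstacle: the argument is essentially dual to the one for injectivity, with closedness of the image of a $*$-homomorphism playing the role that automatic isometry of injective $*$-homomorphisms played there.
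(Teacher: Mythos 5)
Your proof is correct and follows essentially the same route as the paper: the paper's proof consists precisely of the Borel condition $\bigwedge_{b \in B} \bigwedge_{\varepsilon >0} \bigvee_{a \in A} F(a,b) < \varepsilon$ that you derive. Your additional justification via closedness of the image of a $*$-homomorphism is exactly the (implicit) reason this condition characterizes surjectivity rather than mere density of the image, so it is a welcome elaboration rather than a deviation.
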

\begin{proof}
    Let $I'$ be the set of all $(A, B, F) \in \Cstar_{\Mor}$ such that $\widehat{f} \colon \widehat{A} \to \widehat{B}$ is surjective.
    The set $I'$ can be defined by the condition ``for all elements $b$ in $B$, there exists some $a$ in $A$ such that $F$ maps $a$ close to $b$."
    Formally, this is captured by the Borel condition:
    \[ \bigwedge_{b \in B} \bigwedge_{\varepsilon >0} \bigvee_{a \in A} F(a,b) < \varepsilon.    \qedhere 
    \]
\end{proof}

\begin{corollary}
    The set of isomorphisms in $\Cstar_{\Mor}$ is Borel.
\end{corollary}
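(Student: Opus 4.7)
The plan is to reduce to the previous two lemmas by observing that, in the category of $C^*$-algebras with $*$-homomorphisms, a morphism is an isomorphism if and only if it is both injective and surjective. Indeed, if $\widehat{f}\colon \widehat{A}\to\widehat{B}$ is a bijective $*$-homomorphism, then it is automatically an isometry (as in the proof of Lemma \ref{lem: injective Borel}, both $\widehat{f}$ and $\widehat{f}^{-1}$ are contractive), and its set-theoretic inverse is then a $*$-homomorphism, hence a morphism in $\Cstar$. Consequently, the set of isomorphisms in $\Cstar_{\Mor}$ equals $I \cap I'$, where $I$ is the Borel set of injective morphisms and $I'$ is the Borel set of surjective morphisms. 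A finite intersection of Borel subsets of a standard Borel space is Borel, which yields the claim.

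The only mildly delicate point, which is already handled in the supporting lemmas, is that the source and target of a morphism $(A,B,F)\in\Cstar_{\Mor}$ are the pre-substructures $A,B\in\Cstar_{\Ob}$ rather than their completions $\widehat{A},\widehat{B}$. The conditions used to define $I$ and $I'$ (namely $F(a,0)=\|a\|$ for all $a\in A$, respectively $\bigwedge_{b\in B}\bigwedge_{\varepsilon>0}\bigvee_{a\in A} F(a,b)<\varepsilon$) correctly capture isometry of $\widehat{f}$ and density of its image in $\widehat{B}$; combined, these give bijectivity of $\widehat{f}\colon\widehat{A}\to\widehat{B}$, which is exactly what is needed for the inverse to be a morphism in $\Cstar$. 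No further descriptive-set-theoretic argument is required, so this corollary is a one-line consequence of the two preceding lemmas.
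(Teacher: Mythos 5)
Your proof is correct and matches the paper's (implicit) argument: the corollary is stated without proof precisely because it is the intersection of the two Borel sets from the preceding lemmas, using that a bijective $*$-homomorphism between $C^*$-algebras is automatically an isomorphism. Your additional remark that dense image plus closedness of $*$-homomorphic images yields genuine surjectivity onto $\widehat{B}$ is a worthwhile clarification but does not change the route.
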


\begin{lemma}\label{lemma:split_SES_are_Borel}
    Let $J$ be the set of triples $(f, g, h) \in \Cstar_{\Mor}$ such that $(f,g,h)$ form a split short exact sequence, i.e.,
    \begin{itemize}
        \item $\widehat{f}$ is injective,
        \item $\widehat{g} \circ \widehat{h} = \id_C$,
        \item $\kernel \widehat{g} = \image \widehat{f}$.
    \end{itemize}
    Then $J$ is Borel.
\end{lemma}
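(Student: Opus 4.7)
The plan is to express $J$ as the intersection of four Borel subsets of $\Cstar_{\Mor}^3$, one for each of the conditions: (1) $\widehat{f}$ is injective, (2) $\widehat{g} \circ \widehat{h} = \mathrm{id}_C$, (3a) $\image \widehat{f} \subseteq \kernel \widehat{g}$, and (3b) $\kernel \widehat{g} \subseteq \image \widehat{f}$. Condition (1) is immediate from Lemma \ref{lem: injective Borel}. Condition (2) is the Borel equation $\circ(g, h) = i(t(g))$, which is Borel because composition $\circ$ and the identity assignment $i$ are Borel maps in the Borel category $\Cstar$ by Proposition \ref{proposition:Borel_category_of_L_structures}, and equality on the standard Borel space $\Cstar_{\Mor}$ is Borel.

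I would reformulate condition (3a) as the equation $\widehat{g} \circ \widehat{f} = 0_{A,C}$, where the zero morphism $0_{A,C} \in \Cstar_{\Mor}(A, C)$ is represented by the triple $(A, C, \delta_0)$ with $\delta_0(a, c) = \|c\|_C$. The assignment $(A, C) \mapsto 0_{A,C}$ is visibly Borel, and composition is Borel, so this yields a Borel condition as well.

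The main obstacle is condition (3b), since in general there is no uniform bound relating distance to $\kernel \widehat{g}$ and distance to $\image \widehat{f}$, so a naive formulation is not Borel. To handle it I would exploit the splitting $\widehat{h}$: the linear map $p := \mathrm{id}_{\widehat{B}} - \widehat{h} \circ \widehat{g}$ satisfies $\image p = \kernel \widehat{g}$, since $\widehat{g}(p(b)) = \widehat{g}(b) - \widehat{g}(\widehat{h}(\widehat{g}(b))) = 0$ using $\widehat{g} \circ \widehat{h} = \mathrm{id}_C$, and $p(b) = b$ for $b \in \kernel \widehat{g}$. Thus (3b) becomes $p(b) \in \image \widehat{f}$ for every $b \in \widehat{B}$, and since $\image \widehat{f}$ is closed (a standard fact for $*$-homomorphisms between $C^*$-algebras) and $B$ is dense in $\widehat{B}$, it suffices to verify this for $b \in B$ and up to arbitrary $\varepsilon > 0$. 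Using contractivity of $\widehat{g}$ and $\widehat{h}$ to approximate $\widehat{g}(b)$ by some $c \in C$ and then $\widehat{h}(c)$ by some $b' \in B$, I would encode the inclusion as the Borel condition
\[ \bigwedge_{b \in B} \bigwedge_{\varepsilon > 0} \bigvee_{\substack{c \in C,\, b' \in B \\ a \in A}} \bigl( G(b, c) < \varepsilon \wedge H(c, b') < \varepsilon \wedge F(a, b - b') < \varepsilon \bigr), \]
where $b - b' \in B$ because $B$ is closed under subtraction in the $C^*$-language of Example \ref{example:language of C*}. The triangle inequality together with contractivity of $\widehat{h}$ shows that this condition is equivalent to $\kernel \widehat{g} \subseteq \image \widehat{f}$ up to rescaling $\varepsilon$ by a fixed constant. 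Intersecting the four Borel sets thus produced gives $J$.
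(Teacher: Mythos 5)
Your proof is correct, and while your treatment of the first two bullets coincides with the paper's (injectivity via Lemma \ref{lem: injective Borel}, and $\widehat{g}\circ\widehat{h}=\id_C$ via Borelness of composition and of the identity assignment from Proposition \ref{proposition:Borel_category_of_L_structures}), your handling of exactness in the middle is genuinely different. The paper encodes $\kernel\widehat{g}=\image\widehat{f}$ by a single biconditional over the countable dense set $B$, reading off $\kernel\widehat{g}$ as $\{b\in B\colon G(b,0)=0\}$ and $\image\widehat{f}$ as the set of $b\in B$ at distance zero from the image; this is shorter but leans on the tacit assumption that these countable sets are dense in the actual kernel and image, which is not automatic (the fixed dense subset $B$ could meet the closed ideal $\kernel\widehat{g}$ only in $0$, in which case the exact equation $G(b,0)=0$ detects nothing). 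You instead split the equality into two inclusions: $\image\widehat{f}\subseteq\kernel\widehat{g}$ becomes the Borel morphism identity $\widehat{g}\circ\widehat{f}=0_{A,C}$ with the zero morphism Borel-assignable, and $\kernel\widehat{g}\subseteq\image\widehat{f}$ is reduced, via the retraction $p=\id_{\widehat{B}}-\widehat{h}\circ\widehat{g}$ whose image is exactly $\kernel\widehat{g}$ once bullet two holds, to the $\varepsilon$-quantified approximate condition you display, which is manifestly Borel and correctly passes to the completion because $p$ is continuous and $\image\widehat{f}$ is closed. What your route buys is robustness: every quantifier is approximate, so density of $B$ in $\widehat{B}$ is the only density used, and the retraction supplies witnesses for arbitrary kernel elements rather than hoping they lie in $B$. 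The (harmless) cost is that your encoding of the second inclusion genuinely uses the splitting $h$, so it captures split exactness specifically rather than exactness of an arbitrary two-term complex --- but that is precisely what the lemma asserts, and it is all that the axiom scheme $\textnormal{SES}^D_{f,g,h}$ in Definition \ref{definition: T_KK} requires.
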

\begin{proof}
    The first bullet is Borel by Lemma \ref{lem: injective Borel}. The second bullet is Borel because the composition map is Borel (see Proposition \ref{proposition:Borel_category_of_L_structures}), along with the fact that $\id_C = [i(C)]$ is a constant symbol in our language.

    To encode exactness in the middle, recall from Proposition \ref{proposition:Borel_category_of_L_structures} that elements in $\Cstar_{\Mor}$ are encoded by elements $f = (A, B, F)$ and $g = (B, C, G)$, where $F, G$ encode graphs of functions. Thus, $\kernel \widehat{g}$ is encoded by the dense set $\{b \in B \colon G(b,0) = 0\}$ and similarly $\image \widehat{f}$ is encoded by $\{ b \in B \colon \exists a \in A, \, F(a,b) = 0\}$. 
    Thus, the set in question is Borel since it can be written as:
    \[
        \bigwedge_{b \in B} [G(b,0) = 0 \iff \bigwedge_{\varepsilon > 0} \bigvee_{a \in A} F(a, b) < \varepsilon]. \qedhere
    \]
\end{proof}

Having shown that the collection of split exact sequences is Borel, we move on to the stability property, namely that $KK$ is unaffected by tensoring with the compact operators $\mathcal{K}$.

\begin{construction}\label{construction:matrix_amplifications}
    Let $A \in \Cstar_{\Ob}$ and consider the (directed) union of $n \times n$ matrices with coefficients in $A$, for all natural $n$. Denote this set by $M_0(A)$. Transport $+,\times, (-)^*, (\lambda \cdot)_{\lambda \in \mathbb{Q}(i)}$ from $A$ to $M_0(A)$, where the operations are naturally interpreted in the matrix algebra. Moreover, define
    \[ \| (a_{ij}) \| := \sup \bigg\| \sum_{i=1}^n \sum_{j=1}^n c_i a_{ij} d_j^* \bigg\|, \]
    where the supremum is taken over tuples $c_1, \dots, c_n \in A, d_1, \dots, d_n \in A$ such that
    \[ \bigg\| \sum_{i=1}^n c_i c_i^* \bigg\| \leq 1 \textnormal{ and } \bigg\| \sum_{i=1}^n d_i d_i^* \bigg\| \leq 1. \]
    Fix a (computable) bijection between $M_0(\omega)$ and $\omega$ (recalling that $A$ is indexed by $\omega$). 
    Using this bijection, one can see $M_0(A)$ as an element of $\Cstar_{\Ob}$.
\end{construction}

\begin{lemma}\label{lemma:matrix_amplifications_description}
    Under the notation of the above construction, $M_0(A) \in \Cstar_{\Ob}$ and $\widehat{M_0(A)}$ is isomorphic to $\widehat{A} \otimes \fancyK$.
\end{lemma}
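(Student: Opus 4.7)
The plan is to identify $M_0(A)$ with the directed union $\bigcup_n M_n(A)$ of algebraic matrix algebras over the countable dense pre-$C^*$-subalgebra $A\subseteq \widehat{A}$, verify that the supremum formula in Construction \ref{construction:matrix_amplifications} coincides with the restriction of the $C^*$-norm from $M_n(\widehat{A})$, and then apply the well-known fact that $\bigcup_n M_n(\widehat{A})$ is norm-dense in $\widehat{A}\otimes\fancyK$.

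The first and only non-routine step is to check that the supremum formula agrees on each $M_n(A)$ with the ambient $C^*$-norm inherited from $M_n(\widehat{A})$. I would view $\widehat{A}^n$ as a right Hilbert $\widehat{A}$-module with inner product $\langle\xi,\eta\rangle = \sum_i \xi_i^*\eta_i$, so that for $a=(a_{ij})\in M_n(\widehat{A})$,
\[
\|a\|_{M_n(\widehat{A})} \;=\; \sup\bigl\{\|\langle\eta,a\xi\rangle\| \,:\, \|\langle\xi,\xi\rangle\|\leq 1,\ \|\langle\eta,\eta\rangle\|\leq 1\bigr\}.
\]
Substituting $c_i := \eta_i^*$ and $d_j := \xi_j^*$ rewrites $\langle\eta,a\xi\rangle = \sum_{i,j}\eta_i^*a_{ij}\xi_j$ as $\sum_{i,j}c_i a_{ij}d_j^*$, and turns the norm constraints into exactly $\|\sum_i c_ic_i^*\|\leq 1$ and $\|\sum_j d_jd_j^*\|\leq 1$. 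Density of $A$ in $\widehat{A}$ then lets me restrict the suprema to $c_i,d_j\in A$ without changing the value. Since the algebraic operations on $M_0(A)$ are by construction inherited from $M_0(\widehat{A})$, this proves $M_0(A)$ is a pre-$C^*$-algebra and hence, after transport along the fixed bijection $M_0(\omega)\cong\omega$, an element of $\Cstar_{\Ob}$.

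For the isomorphism $\widehat{M_0(A)}\cong \widehat{A}\otimes\fancyK$, I would chain two standard density facts. First, since $A$ is dense in $\widehat{A}$, $M_n(A)$ is dense in $M_n(\widehat{A})$ for each $n$, so $M_0(A)$ is dense in $\bigcup_n M_n(\widehat{A})$. Second, $\bigcup_n M_n(\widehat{A})$ is the image of $\widehat{A}\otimes_{\mathrm{alg}}\bigcup_n M_n(\mathbb{C})$, which is dense in $\widehat{A}\otimes\fancyK$ because $\bigcup_n M_n(\mathbb{C})$ is dense in $\fancyK$. Composing, $M_0(A)$ is dense in $\widehat{A}\otimes\fancyK$, so its completion is $\widehat{A}\otimes\fancyK$. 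The main obstacle is thus the Hilbert-module unpacking of the norm formula; after that, the identification of the completion follows from standard facts about $C^*$-algebras with compact operators.
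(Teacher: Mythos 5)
Your proposal is correct and follows essentially the same route as the paper: both identify the supremum formula with the operator norm coming from the action of $M_n(\widehat{A})$ on the Hilbert $C^*$-module $\widehat{A}^n$ (your explicit substitution $c_i=\eta_i^*$, $d_j=\xi_j^*$ is exactly the computation the paper leaves implicit), and both then identify the completion with $\widehat{A}\otimes\fancyK$ via density of the matrix algebras, the paper phrasing this as commutation of tensor products with direct limits along faithful connecting maps. The only point the paper makes explicit that you elide is the norm estimate $\|(a_{ij})\|\leq\bigl(\sum_{ij}\|a_{ij}\|^2\bigr)^{1/2}$ justifying that entrywise density of $A$ in $\widehat{A}$ gives density of $M_n(A)$ in $M_n(\widehat{A})$, which is routine.
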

\begin{proof}
    The norm defined in Construction \ref{construction:matrix_amplifications} is given by the natural action of $M_n(\widehat{A})$ on the Hilbert $C^*$-module $\widehat{A}^n$, with inner product given by $\ip{(a_i), (b_i)} = \sum_{i=1}^n a_i^* b_i$. 
    One can use this characterization to check that the norm satisfies the $C^*$-identity. 
    Now, recall that taking tensor products and direct limits of $C^*$-algebras commute whenever the connecting maps of the directed limit are faithful. Since we consider the map from $M_n(A)$ to $M_{n+1}(A)$ given by $a \mapsto \begin{pmatrix} a & 0 \\ 0 & 0 \end{pmatrix}$, this is satisfied. It follows that the $C^*$-algebra given by $\varinjlim M_n(\widehat{A})$ is isomorphic to $\varinjlim M_n(\mathbb{C}) \otimes \widehat{A} = \mathcal{K} \otimes \widehat{A}$. It remains to argue that $\varinjlim M_n(\widehat{A})$ is isomorphic to the norm-closure of $\varinjlim M_n(A)$. This follows once we observe a relationship between the norm on $A$ and the norm on $M_n(A)$. For all $(a_{ij}) \in M_n(A)$ and tuples $c_1, \dots, c_n \in A, d_1, \dots, d_n \in A$, 
    \begin{align*}
        \bigg\| \sum_i c_i \sum_j a_{ij} d_j^* \bigg\| &\leq \left(\sum_i \norm{c_i}^2 \right)^{1/2}  \left( \sum_i \bigg\| \sum_j a_{ij} d_j^* \bigg\| ^2 \right) ^{1/2} \\
        &\leq \left(\sum_i \norm{c_i}^2 \right)^{1/2} \left( \sum_{ij} \norm{a_{ij}^2} \sum_j \norm{d_j^*}^2 \right)^{1/2} \\
        &\leq \left(\sum_i \norm{c_i^* c_i} \right)^{1/2} \left( \sum_{ij} \norm{a_{ij}^2} \right)^{1/2} \left( \sum_j \norm{d_j d_j^*} \right)^{1/2} \\
        &\leq \left( \sum_{ij} \norm{a_{ij}^2} \right)^{1/2}.
    \end{align*} 
    Thus, an $n^2$-tuple of Cauchy sequences in $A$ always induces a Cauchy sequence in $M_n(A)$.
\end{proof}

\begin{lemma} \label{lemma:stabilization is Borel}
    There is a function $M_1 \colon \Cstar_{\Ob} \to \Cstar_{\Ob}$, such that $M=(M_0,M_1)$ is a Borel functor that is (naturally isomorphic to) the functor on the category of separable $C^*$-algebras which sends at $C^*$-algebra $\widehat{A}$ to its stabilization $\widehat{A} \otimes \mathcal{K}$. 
    
    Moreover, there is a Borel natural transformation $\alpha \colon \id \to M$ taking $A$ to the map $\alpha_A$ from $A$ to $M_0(A)$ embedding $A$ into the top-left corner.
\end{lemma}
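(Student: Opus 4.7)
The plan is to define $M_1$ on morphisms by entrywise application, define $\alpha$ as the top-left-corner embedding, and in both cases reduce the verification of Borelness to the same kind of continuity/density argument that underlies Construction \ref{construction:matrix_amplifications}. For $\phi = (A, B, F) \in \Cstar_{\Mor}$ encoding a $*$-homomorphism $\widehat{f} \colon \widehat{A} \to \widehat{B}$ (so $F(a,b) = d_B(\widehat{f}(a),b)$ by Lemma \ref{lemma:Borel_cat_of_L_structures}), the entrywise extension $\widehat{f}^M$ of $\widehat{f}$ is a $*$-homomorphism $\widehat{M_0(A)} \to \widehat{M_0(B)}$; indeed, under the identification of Lemma \ref{lemma:matrix_amplifications_description} it is $\widehat{f} \otimes \id_{\mathcal{K}}$. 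I then set $M_1(\phi) := (M_0(A), M_0(B), F^M)$ with $F^M(\mathbf{a},\mathbf{b}) := d_{M_0(B)}(\widehat{f}^M(\mathbf{a}), \mathbf{b})$. Functoriality of $M = (M_0, M_1)$ will be immediate from the componentwise definition.

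Next I would verify Borelness of $M_0$. Under the fixed computable bijection $\omega \to M_0(\omega)$, the operations $+, \times, (-)^*, (\lambda\cdot)$ on $M_0(A)$ are componentwise transports of the corresponding operations on $A$, hence Borel in $A$. For the norm, the supremum appearing in Construction \ref{construction:matrix_amplifications} ranges a priori over $c_i, d_j \in \widehat{A}$, but by continuity of the norm and density of $\omega \subseteq A$ in $\widehat{A}$, it coincides with its restriction to $c_i, d_j \in \omega$. The constraints $\|\sum c_i c_i^*\| \leq 1$ and $\|\sum d_j d_j^*\| \leq 1$ are Borel in $A$ and these tuples, and for fixed tuples the quantity $\|\sum_i c_i \sum_j a_{ij} d_j^*\|$ is Borel in $A$. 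So the norm on $M_0(A)$ is a countable supremum of Borel quantities.

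For $M_1$, the analogous formula
\[ F^M(\mathbf{a}, \mathbf{b}) = \sup \Bigl\| \sum_i c_i \sum_j \bigl(\widehat{f}(a_{ij}) - b_{ij}\bigr) d_j^* \Bigr\| \]
can be evaluated as a countable supremum over $(c_i), (d_j)$ running through $\omega \subseteq B$ by the same density argument. Each $\widehat{f}(a_{ij})$ is recoverable from $F$ as the limit of any sequence $(b_n) \subseteq B$ with $F(a_{ij}, b_n) \to 0$, so after substituting such approximations and using continuity of the norm, one obtains $F^M$ as a Borel function of $(A, B, F)$. Membership of $M_1(\phi)$ in $\Cstar_{\Mor}$ then follows from Lemma \ref{lemma:Borel_cat_of_L_structures}, since $F^M$ is the graph metric of the contractive $*$-homomorphism $\widehat{f}^M$.

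Finally, I define $\alpha_A := (A, M_0(A), G_A)$ with $G_A(a, \mathbf{b}) := d_{M_0(A)}(\iota_A(a), \mathbf{b})$, where $\iota_A \colon A \to M_0(A)$ sends $a$ to its image in $M_1(A) \subseteq M_0(A)$ (computed via the fixed bijection). Borelness of $A \mapsto \alpha_A$ will follow from the arguments above, and naturality $\alpha_B \circ \phi = M_1(\phi) \circ \alpha_A$ is routine since entrywise application commutes with the top-left-corner inclusion. The main obstacle throughout is that the norm on $M_0(A)$ is a priori defined by an uncountable supremum; the proof rests on reducing this to a countable supremum of Borel quantities via continuity and density of $\omega$ in $\widehat{A}$, which is exactly the content of Construction \ref{construction:matrix_amplifications}.
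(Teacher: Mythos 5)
Your definitions of $M_1$ (entrywise application, i.e.\ $\widehat{f}\otimes\id_{\mathcal K}$) and of $\alpha$ agree with the paper's, but your route to Borelness of $M_1$ is genuinely different. The paper lifts to the space $X_1$ of Cauchy-sequence representatives: a representative $(f_n)$ of $\widehat f$ induces coordinatewise a representative $(g_n)$ of $\widehat f\otimes\id_{\mathcal K}$, the Cauchyness of $(g_n)$ being controlled by the entrywise norm bound $\|(a_{ij})\|\le(\sum_{ij}\|a_{ij}\|^2)^{1/2}$ from Lemma \ref{lemma:matrix_amplifications_description}; this exhibits the graph of $M_1$ as a Borel image of $X_1$, hence analytic, and Suslin's theorem then gives Borelness. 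You instead compute the graph-metric encoding $F^M$ directly from $F$ as a countable supremum, recovering $\widehat f(a_{ij})$ from $F$ via approximants in $\omega$. Carried out carefully (replace ``substituting such approximations and using continuity of the norm'' by: $F^M(\mathbf a,\mathbf b)<r$ iff there exist a rational $\delta>0$ and $b'_{ij}\in\omega$ with $F(a_{ij},b'_{ij})<\delta$ and $\|(b'_{ij}-b_{ij})\|_{M_0(B)}<r-C_n\delta$, where $C_n$ comes from the same entrywise norm estimate), this quantifies only over countable data and so yields Borelness of $M_1$ without invoking Suslin --- slightly more work per step, but a sharper and more self-contained conclusion. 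Two small points deserve a sentence each in a full write-up: restricting the supremum defining $\|\cdot\|_{M_0(A)}$ to tuples from the dense set $\omega$ under the \emph{closed} constraint $\|\sum c_ic_i^*\|\le 1$ requires the usual scaling remark ($c_i\mapsto(1-\varepsilon)c_i$); and the quantitative error bound above is exactly the ``key fact'' the paper isolates, so you should cite Lemma \ref{lemma:matrix_amplifications_description} for it rather than bare continuity. Your explicit treatment of the natural transformation $\alpha$ (which the paper leaves to the reader) is correct.
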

\begin{proof}
    It is clear from the definition that $M_0 \colon \Cstar_{\Ob} \to \Cstar_{\Ob}$ is Borel. We construct $M_1 \colon \Cstar_{\Mor} \to \Cstar_{\Mor}$ using Lemma~\ref{lemma:matrix_amplifications_description}. More precisely, for $\widehat{f} \colon \widehat{A} \to \widehat{B}$, we look at $\widehat{f} \otimes \id_{\fancyK} \colon \widehat{A} \otimes \fancyK \to \widehat{B} \otimes \fancyK$ and define $M_1(f)=(M_0(A),M_0(B),H) \in \Cstar_{\Mor}$ by taking $H$ to be induced by the *-homomorphism $\widehat{h} \colon \widehat{M_0(A)} \to \widehat{M_0(B)}$ obtained from $\widehat{f} \otimes \id_{\fancyK}$. 
    
    It follows that $M=(M_0,M_1)$ is (naturally isomorphic to) the stabilization-by-$\mathcal{K}$ functor, and we are left with showing that $M_1$ is Borel. Using Suslin's theorem, it is enough to see that the graph of $M_1$ is analytic. Let $f:=(A,B,(f_n)_{n \in \omega})$ be an element inducing $\widehat{f} \colon \widehat{A} \to \widehat{B}$ (we use the notation from Lemma \ref{lemma: Cauchy sequences X_1 Borel} and Proposition \ref{proposition:Borel_category_of_L_structures}). We define $g:=(M_0(A),M_0(B), (g_n)_{n \in \omega})$ by letting $g_n$ to be the map $M_0(A) \to M_0(B)$ induced by $f_n$ coordinate-wise. We claim that $\widehat{g} = \widehat{h}$. Indeed, the key fact is that the norm of $(a_{ij}) \in M_n(A)$ is bounded above by the norms of the matrix coefficients $a_{ij}$ in $A$, as shown in the proof of Lemma \ref{lemma:matrix_amplifications_description}. This implies that $(g_n)_{n \in \omega}$ is Cauchy in a way that depends explicitly on $(f_n)_{n \in \omega}$, ensuring compatibility of the limits $\widehat{g}, \widehat{h}$. This shows that the graph of $M_1$ is analytic, by presenting it as a Borel image of the set $X_1$ from Lemma \ref{lemma: Cauchy sequences X_1 Borel} (in the sense of the convention we took at the beginning of Section \ref{section:Borel_properties_of_C_star_alg}).
    We leave the proof of the moreover part to the reader.
\end{proof}

\begin{corollary} 
    The set of morphisms in $\Cstar_{\Mor}$ that are of the form $\alpha_A$ for $A \in \Cstar_{\Ob}$ is analytic.
\end{corollary}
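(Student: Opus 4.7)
The plan is essentially immediate from the preceding lemma. By Lemma~\ref{lemma:stabilization is Borel} (the ``moreover'' part), the map $\alpha \colon \Cstar_{\Ob} \to \Cstar_{\Mor}$ sending $A$ to the top-left-corner embedding $\alpha_A$ is Borel. The set in question is precisely the image $\alpha(\Cstar_{\Ob})$.

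Since $\Cstar_{\Ob}$ is itself a standard Borel space (in particular, Borel in the ambient product, as in Section~\ref{subsection: Borel categories} and Lemma~\ref{lem: set of separable models is Borel}), and $\alpha$ is a Borel map into $\Cstar_{\Mor}$, the image $\alpha(\Cstar_{\Ob})$ is an analytic subset of $\Cstar_{\Mor}$ by the very definition of an analytic set (the image of a Borel set under a Borel map, as recalled in Section~\ref{subsection: DST preliminaries}).

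Thus there is no real obstacle, and the entire proof amounts to invoking the Borelness of $\alpha$ established in the previous lemma together with the definition of analyticity. One could alternatively describe $\alpha(\Cstar_{\Ob})$ by the explicit formula
\[ \{(A,B,F)\in\Cstar_{\Mor} \colon \exists A'\in\Cstar_{\Ob}\ (A=A'\ \wedge\ B=M_0(A')\ \wedge\ F=\alpha_{A'})\}, \]
which exhibits it directly as a Borel projection of a Borel set, confirming analyticity.
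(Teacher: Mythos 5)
Your proof is correct and is exactly the argument the paper leaves implicit: the set is the image of the standard Borel space $\Cstar_{\Ob}$ under the Borel map $\alpha$ from the ``moreover'' part of Lemma~\ref{lemma:stabilization is Borel}, hence analytic by definition.
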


Lastly, we address analyticity of the homotopy relation.
Recall that two $*$-homomorphisms $\widehat{f}, \widehat{g} \colon \widehat{A} \rightarrow \widehat{B}$ are homotopic if there is a $*$-homomorphism $\widehat{h} \colon \widehat{A} \to C([0,1], \widehat{B})$ such that $\ev_0 \circ \widehat{H} = \widehat{f}$ and $\ev_1 \circ \widehat{F} = \widehat{f}'$. Thus, we first discuss Borelness of the construction which takes $B \in \Cstar_{\Ob}$ to an encoding of the $C^*$-algebra $C([0,1], \widehat{B})$. 

\begin{construction} \label{constr: tensor with C[0,1]}
    Let $B \in \Cstar_{\Ob}$ be the encoding of the $C^*$-algebra $\widehat{B}$ and consider the algebra $C([0,1], \widehat{B})$ of continuous functions from $[0,1]$ to $\widehat{B}$ with respect to the norm topology. 
    Denote by $G_0(B)$ the following set:
    \[ \left\{ \sum_i f_{i}(t) b_{i} \colon f_{i}(t) \in \mathbb{Q}(i)[t], b_{i} \in B \right\}, \]
    where we take finite sums. Define addition, multiplication, multiplication by $\mathbb{Q}(i)$-scalars and $(-)^*$ in the natural way. Also, define a function $\|-\|$ by
    \[ \Big\| \sum_i f_{i}(t) a_{i} \Big\| := \sup_n \max_{j=0, \dots, n} \Big\| \sum_i  f_{i} \left(\frac{j}{n} \right) b_{i} \Big\|_B. \]
    Then $G_0(B) \in \Cstar_{\Ob}$ and the completion of $G_0(B)$ is isomorphic to $C([0,1], \widehat{B})$.
\end{construction}

Indeed, that $\widehat{G_0(B)} \cong C([0,1], \widehat{B})$ as $C^*$-algebras follows since the inclusion map $G_0(B) \rightarrow C([0,1], \widehat{B})$ is isometric, and the latter object is norm-closed. 

\begin{lemma}\label{lemma:continuous_functions_from_01_to_x_Borel_functor}
    There is a function $G_1 \colon \Cstar_{\Mor} \to \Cstar_{\Mor}$, such that $G=(G_0,G_1)$ is a Borel endofunctor of $\Cstar$ that is (naturally isomorphic to) the functor $B \mapsto C([0,1], B)$ on the category of separable $C^*$-algebras. Moreover, evaluations $\ev_t$ (for $t \in [0,1]$) are Borel natural transformations from $G$ to the identity functor. 
\end{lemma}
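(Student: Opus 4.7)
The plan is to follow the blueprint of Lemma \ref{lemma:stabilization is Borel}, with Construction \ref{constr: tensor with C[0,1]} playing the role of matrix amplification. On honest $C^*$-algebras, the functor and natural transformations are clear: a $*$-homomorphism $\widehat{f} \colon \widehat{A} \to \widehat{B}$ induces $C(\widehat{f}) \colon C([0,1], \widehat{A}) \to C([0,1], \widehat{B})$ by post-composition $h \mapsto \widehat{f} \circ h$, and $\ev_t(h) = h(t)$ gives the required natural transformation to the identity functor.

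At the level of representatives, given $f = (A, B, (f_n)_n) \in X_1$ encoding $\widehat{f}$ as in Lemma \ref{lemma: Cauchy sequences X_1 Borel}, I would define $G_1(f) = (G_0(A), G_0(B), (g_n)_n)$ coordinate-wise by
\[ g_n\Big(\sum_{i=1}^K p_i(t) a_i\Big) \;=\; \sum_{i=1}^K p_i(t) f_n(a_i), \]
interpreted through the fixed computable bijection between the elementary formal sums and $\omega$. Using the explicit norm on $G_0(A)$ and the fact that each $p_i \in \mathbb{Q}(i)[t]$ has finite supremum on $[0,1]$, one obtains for any $x = \sum_{i=1}^K p_i(t) a_i \in G_0(A)$ the estimate
\[ \Big\| g_n(x) - g_m(x) \Big\|_{G_0(B)} \;\leq\; \sum_{i=1}^K \Big(\sup_{t \in [0,1]} |p_i(t)|\Big) \, \| f_n(a_i) - f_m(a_i) \|_B, \]
a finite sum of Cauchy terms; hence $(g_n(x))$ is Cauchy in $G_0(B)$ with limit $C(\widehat{f})(x)$ by continuity of $\widehat{f}$. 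Thus $G_1(f) \in X_1$ and the induced homomorphism is $C(\widehat{f})$, which yields both functoriality of $G = (G_0, G_1)$ and the natural isomorphism with $B \mapsto C([0,1], B)$ at the level of underlying categories. Borelness of $G_1$ then follows exactly as in Lemma \ref{lemma:stabilization is Borel}: the coordinate-wise formula exhibits the graph of $G_1$ as a Borel image of $X_1$, hence analytic, and Suslin's theorem concludes.

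For the evaluation transformations, fix $t \in [0,1]$ and choose a sequence of rationals $(t_k) \subseteq \mathbb{Q} \cap [0,1]$ converging to $t$. I would define $\ev_t(B) = (G_0(B), B, H_{t,B}) \in \Cstar_{\Mor}$, where for $x = \sum_i p_i(t) b_i \in G_0(B)$ and $c \in B$ one sets
\[ H_{t,B}(x, c) \;=\; \lim_k \Big\| \sum_i p_i(t_k) b_i - c \Big\|_B. \]
The partial evaluations $\sum_i p_i(t_k) b_i$ lie in $B$ since $B$ is closed under $\mathbb{Q}(i)$-scalar multiplication and addition, and the limit exists and equals $d_{\widehat{B}}(\ev_t(x), c)$ by continuity of evaluation. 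Hence $H_{t,B}$ represents the $*$-homomorphism $\ev_t \colon \widehat{G_0(B)} \cong C([0,1], \widehat{B}) \to \widehat{B}$ in the sense of Lemma \ref{lemma:Borel_cat_of_L_structures}, and Borelness of $B \mapsto \ev_t(B)$ is immediate from the formula, since each term in the limit is Borel in the data. Naturality $\ev_t(B) \circ G_1(f) = f \circ \ev_t(A)$ holds on the nose at the $*$-homomorphism level because $C(\widehat{f})$ is defined by post-composition.

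The only technical care goes into the Cauchy estimate for $(g_n(x))$, which is what makes $G_1$ well-defined on $X_1$ and independent of the representing sequence, and the matching verification that $H_{t,B}$ genuinely encodes the evaluation morphism; both reduce to the explicit norm formula of Construction \ref{constr: tensor with C[0,1]} together with continuity of $*$-homomorphisms.
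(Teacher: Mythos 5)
Your proposal is correct and follows essentially the same route as the paper: the paper likewise defines $G_1$ by transporting $\id_{C[0,1]}\otimes\widehat{f}$ to coordinate-wise action on representing sequences and derives analyticity of the graph from the norm estimate $\lVert\sum_i p_i(t)a_i\rVert_{G_0(A)}\leq M\sum_i\lVert a_i\rVert_A$, which is exactly your Cauchy bound, followed by the same Suslin argument as in Lemma \ref{lemma:stabilization is Borel}. Your explicit treatment of $\ev_t$ via rational partial evaluations is a detail the paper leaves to the reader, and it is handled correctly.
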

\begin{proof}
    The proof is similar to that of Lemma \ref{lemma:stabilization is Borel}. In particular, for any $*$-homomorphism $\widehat{f} \colon \widehat{A} \rightarrow \widehat{Y}$ we take $G_1 \colon \Cstar_{\Mor} \rightarrow \Cstar_{\Mor}$ to be induced by the $*$-homomorphism $\id_{C[0,1]} \otimes \widehat{f} \colon C([0,1], \widehat{A}) \rightarrow C([0,1], \widehat{B})$.  The other arguments are similar, with analyticity of $G_1$ following due to the relation between the norms:
    \[
        \left \lVert \sum_i f_i(t) a_i \right \rVert_{G_0(A)}  = \sup_n \max_{j = 0, \dots, n} \left \lVert \sum_i f_i \left( \frac{j}{n} \right) a_i \right \rVert_A \leq M \sum_i \norm{a_i}_{A},
    \]
    where $M$ is the maximum absolute value of the polynomials $f_1, \dots, f_n$ on $[0,1]$.
    
\end{proof}

The above lemma is related to \cite[Lemma 3.10]{Farahturbulence2014}, where the Borelness of $G_0$ (and of the functor of tensoring with any separable nuclear $C^*$-algebra) is shown. We conclude with the analyticity of the set of pairs of homotopic morphisms.

\begin{lemma}\label{lemma:pairs_of_homotopic_maps}
    Let $H \subset (\Cstar_{\Mor})^2$ be the set of pairs of $f, f'$ such that their sources and targets are the same respectively, and $\widehat{f}$ is homotopic to $\widehat{f}'$. Then $H$ is analytic.
\end{lemma}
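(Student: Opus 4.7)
The plan is to present $H$ as the projection of a Borel subset of $\Cstar_{\Mor}^3$, exploiting Lemma \ref{lemma:continuous_functions_from_01_to_x_Borel_functor} to encode the ``homotopy witness'' as an auxiliary morphism in $\Cstar$. By definition, $\widehat{f}$ and $\widehat{f}'$ with common source $\widehat{A}$ and target $\widehat{B}$ are homotopic if and only if there exists a $*$-homomorphism $\widehat{h}\colon\widehat{A}\to C([0,1],\widehat{B})$ with $\ev_0\circ\widehat{h}=\widehat{f}$ and $\ev_1\circ\widehat{h}=\widehat{f}'$. So homotopy is inherently an existential (hence analytic) statement, and the only question is that the conditions on the witness $h$ are Borel.

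First, I would invoke Lemma \ref{lemma:continuous_functions_from_01_to_x_Borel_functor}: it provides a Borel endofunctor $G=(G_0,G_1)$ of $\Cstar$ naturally isomorphic to the functor $B\mapsto C([0,1],B)$, together with Borel natural transformations $\ev_0,\ev_1\colon G\to\id$. Recall that a Borel natural transformation $\alpha$ is, by definition, a Borel map $\alpha\colon \Cstar_{\Ob}\to \Cstar_{\Mor}$, so in particular $B\mapsto \ev_{t,B}$ is Borel for $t\in\{0,1\}$, and $B\mapsto G_0(B)$ is Borel as well. Using the natural isomorphism $G_0(B)\cong C([0,1],B)$, a pair $(f,f')\in\Cstar_{\Mor}^2$ lies in $H$ if and only if $s(f)=s(f')$, $t(f)=t(f')$, and there exists $h\in\Cstar_{\Mor}(s(f),G_0(t(f)))$ with
\[ \ev_{0,t(f)}\circ h=f \quad \text{and} \quad \ev_{1,t(f)}\circ h=f'. \]

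Next, I would define the auxiliary set
\[ \widetilde{H}:=\{(f,f',h)\in\Cstar_{\Mor}^3 \;:\; s(f)=s(f')=s(h),\ t(f)=t(f'),\ t(h)=G_0(t(f)),\]
\[ \ev_{0,t(f)}\circ h=f,\ \ev_{1,t(f)}\circ h=f'\}. \]
Each defining condition is a Borel equality between points of standard Borel spaces: $s,t$ are Borel by Definition \ref{def: Borel category}, $G_0$ is Borel by Lemma \ref{lemma:continuous_functions_from_01_to_x_Borel_functor}, $B\mapsto\ev_{t,B}$ is Borel, and the composition map $\circ$ is Borel by Proposition \ref{proposition:Borel_category_of_L_structures}. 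Hence $\widetilde{H}$ is Borel, and $H=\pi_{12}(\widetilde{H})$ is its projection onto the first two coordinates, so $H$ is analytic.

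There is no real obstacle here: the work is all in the preceding lemmas, which package ``$B\mapsto C([0,1],\widehat{B})$'' and the evaluation maps as Borel data on $\Cstar$. The only care needed is to remember that a natural transformation between Borel functors supplies a Borel family of morphisms indexed by objects, so requiring a factorization of the form $f=\ev_{0,t(f)}\circ h$ is a Borel condition (rather than merely analytic), which allows us to conclude $\widetilde{H}$ is Borel before projecting.
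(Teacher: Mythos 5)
Your proof is correct and follows essentially the same route as the paper: both express $H$ as the image (projection) of a Borel set of homotopy witnesses $h\in\Cstar_{\Mor}(s(f),G_0(t(f)))$, using Lemma \ref{lemma:continuous_functions_from_01_to_x_Borel_functor} for Borelness of $G_0$ and of the evaluation natural transformations, and Proposition \ref{proposition:Borel_category_of_L_structures} for Borelness of composition. Your write-up is in fact slightly more explicit than the paper's one-line argument about which conditions make the witness set Borel before projecting.
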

\begin{proof}
    This is by Lemma~\ref{lemma:continuous_functions_from_01_to_x_Borel_functor}, because we can write
    \[ H = \{ (\ev_0(F) , \ev_1(F)) \colon (\exists F \in \Cstar_{\Mor})(\exists B \in \Cstar_{\Ob}) (t(F) = G_0(B)) \}, \]
    where $t$ is the target map in the Borel category $\Cstar$.
\end{proof}

\subsection{Borel complexity of $KK$-equivalence}\label{section: analycity of KK}

We have now established all the groundwork to define our theory $T_{KK}$ for discussing $KK$ as a universal functor. We use the notation and terminology from Section \ref{subsection: analytic conditions}, so in the following definition we abuse the notation by writing the $\circ$ symbol, instead of appropriate $c_{A,B,C}$'s. We also define a language $L_{KK}$ as $L_0$ from Construction \ref{construction: describing L_0 depending on X} with $\mathbf{X}$ being $\Cstar$.
\begin{definition} \label{definition: T_KK}
    Let $T_{KK}$ be the $L_{KK}$-theory consisting of $T_0$, as well as:
    \begin{itemize}
        \item Abelian group axioms on $(\Mor(A,B),+_{A,B},0_{A,B})$ for all $A, B \in \Cstar_{\Ob}$;
        \item A family of axioms 
        \[\textnormal{HI}_{f,f'} \colon~[f]=[f'],\]
        for each $A, B\in \Cstar_{\Ob}$ and $f,f'\in \Cstar_{\Mor}(A,B)$ with $\widehat{f}$ homotopic to $\widehat{f}'$;
        \item A family of axioms 
        \[\textnormal{Stab}_{A} \colon~ \exists \alpha\in \Cstar_{\Mor}(M_0(A),A)~\big(\alpha\circ g=[i(A)]\wedge g\circ\alpha= [i(M_0(A))]\big)\]
        for each $A \in \Cstar_{\Ob}$ and $g\in \Cstar_{\Mor}(A,M_0(A))$ corresponding to the natural inclusion $\widehat{A} \to \widehat{A}\otimes\mathcal{K}$;
        \item A family of axioms
        \begin{align*}
            \textnormal{SES}_{f,g,h}^{D} \colon~\forall\alpha,\alpha'\in\Mor(D, A) 
        \big([f]\circ\alpha=[f]\circ\alpha'\rightarrow \alpha=\alpha'\big) \\
        ~\wedge~\forall\beta\in\Mor(D, C)\big([g\circ h]\circ\beta=\beta\big)
        \end{align*}
        for all split short exact sequences
        \[
        \begin{tikzcd}
	0 & A & B & C & 0
	\arrow[from=1-1, to=1-2]
	\arrow["f", from=1-2, to=1-3]
	\arrow["g", curve={height=-6pt}, from=1-3, to=1-4]
	\arrow["h"', curve={height=-8pt}, from=1-4, to=1-3]
	\arrow[from=1-4, to=1-5]
        \end{tikzcd}
        \]
        and all $D \in \Cstar_{\Ob}$.
        
    \end{itemize}
\end{definition}

\begin{proposition}
    $T_{KK}$ is an analytic condition.
\end{proposition}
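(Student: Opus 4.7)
The plan is to decompose $T_{KK}$ into its five families as listed in Definition \ref{definition: T_KK}, and show that each family is an analytic subset of the standard Borel space of $L_{KK}$-sentences. Since analytic sets are closed under finite unions, this suffices. In each case, the strategy is to identify a Borel or analytic parameter space indexing the axioms, and verify that the substitution map sending parameters to the corresponding $L_{KK}$-sentence is Borel. This substitution is handled by an induction on formula construction, analogous to the argument in Lemma \ref{lemma:set_of_proofs_is_Borel}.

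Concretely, $T_0$ is countable and hence trivially Borel. The abelian group axioms form a finite schema parametrized by the Borel set $\Cstar_{\Ob}^2$, so their union is Borel. The homotopy invariance axioms $\textnormal{HI}_{f,f'}$ are indexed by the analytic set $H$ of pairs of homotopic morphisms from Lemma \ref{lemma:pairs_of_homotopic_maps}; taking the image under a Borel substitution map gives an analytic family. The stability axioms $\textnormal{Stab}_A$ are parametrized by pairs $(A,\alpha_A)$, where $\alpha$ is the Borel natural transformation from Lemma \ref{lemma:stabilization is Borel}. Since $A\mapsto(A,\alpha_A)$ is a Borel map between standard Borel spaces, its graph is Borel, yielding a Borel family of sentences. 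Finally, the split exactness axioms $\textnormal{SES}^D_{f,g,h}$ are indexed by $J\times\Cstar_{\Ob}$, where $J$ is the Borel set of split short exact sequences from Lemma \ref{lemma:split_SES_are_Borel}; this product is Borel, producing another Borel family.

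The main technical obstacle is verifying that each substitution map into the standard Borel space of $L_{KK}$-sentences is genuinely Borel. The language $L_{KK}$ has sorts $\Mor(A,B)$ indexed by $\Cstar_{\Ob}^2$ and constants $[f]$ indexed by $f\in\Cstar_{\Mor}$, so one must first fix Borel enumerations of the symbols; once this is done, the Borel structure on the set of $L_{KK}$-formulas arises via induction on formula complexity (as indicated following Definition \ref{definition: X-expansions}), and the substitution maps are Borel by tracking this induction. No single step is genuinely deep, but the bookkeeping in this verification—particularly for the schematic sentences $\textnormal{SES}^D_{f,g,h}$, whose string representation depends Borel-measurably on four independent parameters—accounts for the real content of the argument.
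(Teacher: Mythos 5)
Your proposal is correct and follows essentially the same route as the paper's proof: decompose $T_{KK}$ into its axiom families and invoke Lemmas \ref{lemma:pairs_of_homotopic_maps}, \ref{lemma:stabilization is Borel}, and \ref{lemma:split_SES_are_Borel} respectively, concluding by closure of analytic sets under finite unions. The paper's version is terser and leaves the Borelness of the parameter-to-sentence substitution maps implicit, which you rightly flag as the only real bookkeeping content.
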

\begin{proof}
    Indeed, the abelian group axioms can easily seen to be Borel.
    The homotopy axioms $\text{HI}_{f, f'}$ are all analytic by Lemma \ref{lemma:pairs_of_homotopic_maps}.
    The stability axioms $\text{Stab}_{A}$ are Borel by Lemma \ref{lemma:stabilization is Borel}, and the split exact sequence axioms $\text{SES}_{f,g,h}^{D}$ are Borel by Lemma \ref{lemma:split_SES_are_Borel}.
\end{proof}

\begin{remark} \label{remark:models are functors description}
    Recall from Proposition \ref{proposition: Borel model of the category of sigma} that $\Cstar$ is a Borel model of the category of separable $C^*$-algebras, or more precisely the completion functor $G := (\widehat{-}) \colon \Cstar \to \mathcal{C}^*$ is an equivalence. 
    Choose an inverse $H \colon \mathcal{C}^* \to \Cstar$ of $G$.

    By Construction \ref{construction: describing L_0 depending on X}, a model $M \models T_{KK}$ defines a functor $F' \colon \Cstar \to \mathcal{A}$, for some abelian category $\mathcal{A}$. 
    By composing with $H$ we get a functor $F^M := F' \circ H \colon \mathcal{C}^* \to \mathcal{A}$, and axioms of $T_{KK}$ ensure that $F$ is a $\kappa\kappa$-functor. 
    
    On the other hand, given a $\kappa\kappa$-functor $F \colon \mathcal{C}^* \to \mathcal{A}$, for some abelian category $\mathcal{A}$, we can construct a model of $M \models T_{KK}$ by first composing with $G$ to get $F \circ G \colon \Cstar \to \mathcal{A}$ and then putting $\Mor(A,B)^M := \Mor_{\mathcal{A}}(F(G(A)),F(G(B)))$ (similarly we define interpretations of constants, etc.). 
\end{remark}

\begin{theorem}
    The set of pairs $(A, B)\in (\Cstar_{\Ob})^2$ such that $\widehat{A}$ and $\widehat{B}$ are $KK$-equivalent is analytic.
\end{theorem}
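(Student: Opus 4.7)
The plan is to apply Theorem \ref{thm:T_equivalence_is_analytic} with $\mathbf{X} = \Cstar$ and $T = T_{KK}$. Having just shown that $T_{KK}$ is an analytic condition, the main task is to verify that the semantic content of Theorem \ref{thm:T_equivalence_is_analytic}, namely that $T_{KK}$ proves the existence of mutually inverse morphisms between $A$ and $B$, is equivalent to $\widehat{A}$ and $\widehat{B}$ being $KK$-equivalent.

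The key bridge is the correspondence from Remark \ref{remark:models are functors description}: models of $T_{KK}$ (in the sense of $T_0$ together with the extra Borel/analytic axioms) correspond, via the equivalence $\Cstar \simeq \mathcal{C}^*$, to $\kappa\kappa$-functors $F \colon \mathcal{C}^* \to \mathcal{A}$ in the sense of Definition \ref{def: KK-functors}. Indeed, the homotopy-invariance axioms $\mathrm{HI}_{f,f'}$, the stability axioms $\mathrm{Stab}_A$, and the split exact sequence axioms $\mathrm{SES}^D_{f,g,h}$ precisely capture conditions (1), (2), (3) of Definition \ref{def: KK-functors}: the last one, in particular, encodes split exactness by saying that post-composing with the injection is injective and that the splitting composed with the quotient acts as identity on morphisms into the quotient (which, by the Yoneda lemma applied to all $D$, is equivalent to split exactness in $\mathcal{A}$ after passing through $F$).

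With this correspondence in hand, $T_{KK}$ proves the sentence
\[
\exists \alpha\in\Mor(A,B)\ \exists\beta\in\Mor(B,A)\ \bigl(\alpha\circ\beta = [i(B)] \wedge \beta\circ\alpha = [i(A)]\bigr)
\]
if and only if $F(\widehat{A}) \cong F(\widehat{B})$ in every $\kappa\kappa$-functor $F$. By the universality of the Kasparov functor (Theorem \ref{thm: KK}), this holds precisely when $KK(\widehat{A})$ and $KK(\widehat{B})$ are isomorphic in $\mathcal{KK}$, i.e., when $\widehat{A}$ and $\widehat{B}$ are $KK$-equivalent (Definition \ref{definition: KK-equivalence}). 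Thus the $KK$-equivalence relation on $\Cstar_{\Ob}$ coincides with the relation $E$ of Theorem \ref{thm:T_equivalence_is_analytic} for $T = T_{KK}$, and is therefore analytic.

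The one subtlety I anticipate is making the ``if and only if'' in the second paragraph fully rigorous, in particular showing that a syntactic proof of invertibility in $T_{KK}$ exists whenever semantic invertibility holds in every $\kappa\kappa$-functor. This follows from Gödel completeness together with the observation that the relevant sentence has the required existential form, but one should note that the bi-invertibility witnesses $\alpha,\beta$ only need to exist in \emph{some} model extending each $\kappa\kappa$-functor, and compatibility with Higson's construction of $\mathcal{KK}$ via an appropriate universal $\kappa\kappa$-functor closes this gap. Once this is settled, applying Theorem \ref{thm:T_equivalence_is_analytic} yields analyticity immediately; the UCT consequence of Theorem \ref{main4: KK-equivalence} then follows by taking the $KK$-equivalence class of any fixed abelian separable $C^*$-algebra, which is an analytic section of an analytic equivalence relation and hence analytic by Fact \ref{fact: UCT KK-equiv-to-commutative}.
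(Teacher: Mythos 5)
Your proposal is correct and follows essentially the same route as the paper: use the correspondence of Remark \ref{remark:models are functors description} between models of $T_{KK}$ and $\kappa\kappa$-functors to identify $KK$-equivalence of $\widehat{A},\widehat{B}$ with $T_{KK}\models(\exists\alpha)(\exists\beta)(\alpha\circ\beta=[i(B)]\wedge\beta\circ\alpha=[i(A)])$, and then invoke Theorem \ref{thm:T_equivalence_is_analytic}. The ``subtlety'' you raise is already absorbed into that theorem (G\"odel completeness is applied there, and Definition \ref{definition: KK-equivalence} makes the quantification over all $\kappa\kappa$-functors, hence over all models, part of the definition), so no extra argument via Higson's universal functor is needed at this step.
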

\begin{proof}
    By Remark \ref{remark:models are functors description}, $\widehat{A}$ and $\widehat{B}$ are $KK$-equivalent if and only if for every model $M \models T_{KK}$, they become equivalent after applying the functor corresponding to $M$. 
    Thus
    \[ T_{KK} \models (\exists \alpha \in \Mor(A, B))(\exists \beta \in \Mor(B, A))( \alpha \circ \beta = [i(B)] \wedge \beta \circ \alpha = [i(A)] ), \]
    and analyticity follows from Theorem \ref{thm:T_equivalence_is_analytic}. 
\end{proof}

Combining with Fact \ref{fact: UCT KK-equiv-to-commutative} and the fact that commutative $C^*$-algebras are an axiomatizable class allows us to conclude.

\begin{theorem}
    The set of unital separable $C^*$-algebras satisfying the UCT is analytic.
\end{theorem}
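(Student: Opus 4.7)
The plan is to combine the analyticity of $KK$-equivalence just established with Fact \ref{fact: UCT KK-equiv-to-commutative}, which characterises the UCT class as those separable $C^*$-algebras that are $KK$-equivalent to a commutative one. The main work is therefore to locate the set of codes of commutative (unital) $C^*$-algebras inside $\Cstar_{\Ob}$ and then present the UCT class as a projection of a suitable set built from these two ingredients.

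First I would verify that the set
\[ C := \{ A \in \Cstar_{\Ob} : \widehat{A} \text{ is commutative} \} \]
is Borel (in fact closed in the topology on $\Cstar_{\Ob}$). Indeed, commutativity of $\widehat{A}$ is equivalent to commutativity on the dense countable subalgebra encoded by $A$, which can be expressed by the Borel condition
\[ \bigwedge_{a,b \in \omega} \bigl( a \times_A b = b \times_A a \bigr). \]
Intersecting with the Borel condition defining $\Cstar_{\Ob}$ shows $C$ is Borel. (If instead we want the unital version, we work inside the Borel subspace of unital separable $C^*$-algebras, as in $T_{C^*}'$ of Example \ref{example:language of C*}.)

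Next, let $KKE \subseteq \Cstar_{\Ob}^2$ denote the $KK$-equivalence relation, which is analytic by the theorem immediately preceding this statement. By Fact \ref{fact: UCT KK-equiv-to-commutative},
\[ \mathrm{UCT} = \bigl\{ A \in \Cstar_{\Ob} : \exists B \in C \text{ with } (A,B) \in KKE \bigr\} = \pi_1\bigl( KKE \cap (\Cstar_{\Ob} \times C) \bigr), \]
where $\pi_1$ is projection onto the first coordinate. Since analytic sets are closed under intersection with Borel sets and under Borel (in particular continuous) images, $\mathrm{UCT}$ is analytic. Restricting along the Borel inclusion of the set of unital separable $C^*$-algebras then yields the stated conclusion.

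The only subtlety I anticipate is a bookkeeping one: we must be consistent about whether we work inside $\bfMod_\omega(T_{C^*})$ or the unital variant $\bfMod_\omega(T_{C^*}')$, and (if we insist that the witnessing commutative algebra also be unital) confirm that the relevant Borel subset of unital commutative $C^*$-algebra codes is indeed Borel inside $\Cstar_{\Ob}$ or its unital analogue. This is immediate from the axiomatisation. No new descriptive‑set‑theoretic obstacle arises, because all the real work has been done in establishing analyticity of $KK$-equivalence via Theorem \ref{thm:T_equivalence_is_analytic}.
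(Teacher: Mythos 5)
Your proposal is correct and follows essentially the same route as the paper: the paper's (one-line) proof likewise combines Fact \ref{fact: UCT KK-equiv-to-commutative} with the analyticity of $KK$-equivalence and the observation that commutative $C^*$-algebras form an axiomatizable (hence Borel) class, which is exactly your set $C$; the UCT class is then analytic as the projection of the intersection of an analytic set with a Borel set. Your direct verification that $C$ is Borel via the quantifier-free commutativity condition is just an explicit unwinding of the paper's appeal to axiomatizability.
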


With this in hand, Theorem \ref{main1: classifiable} follows as described at the end of the introduction: we apply the abstract infinitary classification theorem (Corollary \ref{cor: infinitary classification}) using $X \subseteq \Mod_{\omega}(L_{C^*})$ as the set of classifiable $C^*$-algebras and $Y \subseteq \Mod_{\omega}(L_{\inv})$ as the set of possible invariants. 
The unital classification theorem (Theorem~\ref{thm: classification of C*-algebras}), the fact that the other assumptions of classification are Borel (Theorem \ref{thm: USSN Z-stable is Borel}), and the fact that the $KT_u$ map is Borel (Corollary \ref{corollary: KT_u is a Borel invariant}) allow us to satisfy the assumptions of Corollary \ref{cor: infinitary classification}.

\subsection{Questions}
Since the other assumptions of the unital classification theorem are Borel, this begs the natural question.
\begin{question}
    Is the UCT for unital $C^*$-algebras strictly analytic?
\end{question}

If the UCT restricted to the Borel set of nuclear $C^*$-algebras results in a  strictly analytic set, then this would answer the UCT problem in the negative, as the set of nuclear $C^*$-algebras would differ from the set of nuclear $C^*$-algebras with the UCT.
A relaxation of the above question would be to ask the same for the $KK$-equivalence relation (see also \cite[Problem 9.3]{Farahturbulence2014}).
\begin{question}
    Is $KK$-equivalence strictly analytic? What is its Borel cardinality?
\end{question}
The following is also natural to ask, and is related to Problems 9.4, 9.7 in \cite{Farahturbulence2014}.
\begin{question}
    Can the $KT_u$-invariant be lifted to a Borel functor \[ KT_u \colon \bfMod_{\omega}(T_{C^*}') \to \bfMod_{\omega}(\Lang_{\inv})?\]
\end{question}
Note that in Corollary \ref{corollary: KT_u is a Borel invariant} we only prove Borelness on the level of objects. Borelness of the functor would imply that it comes from an infinitary interpretation, see \cite{MR3893282, MR4029718} (one would need an appropriate continuous logic generalisation). 

\printbibliography 

\end{document}